\newlist{steps}{enumerate}{1}
\setlist[steps, 1]{leftmargin=1cm, label = Step \arabic*:}
\titlespacing*\section{0pt}{4pt plus 2pt minus 2pt}{3pt plus 2pt minus 2pt}
\titlespacing*\subsection{0pt}{4pt plus 2pt minus 2pt}{3pt plus 2pt minus 2pt}
\titlespacing*\subsubsection{0pt}{4pt plus 2pt minus 2pt}{3pt plus 2pt minus 2pt}
\newcommand{\ds}{\displaystyle}
\newcommand{\cA}{{\mathcal{A}}}
\newcommand{\cD}{\mathcal{D}}
\newcommand{\cE}{{\mathcal{E}}}
\newcommand*{\rom}[1]{\expandafter\@slowromancap\romannumeral #1@}
\theoremstyle{plain}
\newtheorem{theorem}{Theorem}[section]
\newtheorem{definition}{Definition}
\newtheorem{corollary}[theorem]{Corollary}
\theoremstyle{remark}
\newtheorem{remark}{Remark}[section]
\begin{document}

\title{Theory of Solutions for An Inextensible Cantilever}
 \author{\normalsize \begin{tabular}[t]{c@{\extracolsep{.8em}}c}
            { \large Maria Deliyianni} &{ \large Justin T. Webster} \\
 \it University of Maryland, Baltimore County   \hskip.4cm  & \hskip.4cm \it University of Maryland, Baltimore County   \\
 \it Baltimore, MD &\it Baltimore, MD\\
    \it  mdeliy1@umbc.edu &  \it websterj@umbc.edu \\
\end{tabular}}
\maketitle

\begin{abstract} {\noindent Recent equations of motion for the large deflections of a cantilevered elastic beam are analyzed. In the traditional theory of beam (and plate) large deflections, nonlinear restoring forces are due to the effect of stretching on bending; for an inextensible cantilever, the enforcement of arc-length preservation leads to quasilinear stiffness effects and  inertial effects that are both nonlinear and nonlocal. For this model, smooth solutions are constructed via a spectral Galerkin approach. Additional compactness is needed to pass to the limit, and this is obtained through a complex procession of higher energy estimates. Uniqueness is obtained through a non-trivial decomposition of the nonlinearity. The confounding effects of nonlinear inertia are overcome via the addition of structural (Kelvin-Voigt) damping to the equations of motion. Local well-posedness of smooth solutions is shown first in the absence of nonlinear inertial effects, and then shown with these inertial effects present, taking into account structural damping. With damping in force, global-in-time, strong well-posedness result is obtained by achieving exponential decay for small data.
  \\[.15cm]
\noindent {\bf Key terms}: inextensible cantilever,  elasticity, quasilinear, structural damping, well-posedness
 \\[.15cm]
\noindent {\bf MSC 2010}: 74B20, 35L77, 35B65, 74H20}
\end{abstract}
\maketitle

\section{Introduction}
\subsection{Motivation and Overview}
The large deflections of elastic beams and plates have broad applicability in engineering and other physical sciences, and they have been intensely studied from the modeling, analytical, and computational points of view (see, e.g., \cite{book,springer,ciarlet, lions}). Specifically, with respect to fluid-structure interaction models, the large deflections of panel, airfoil, and flap structures are of particular interest \cite{dowell,survey2} (and references therein). In these circumstances, the presence of a fluid flow can act as a destabilizing mechanism, giving rise to self-excitation instabilities (i.e., aeroelastic flutter \cite{survey2,McHughIFASD2019}) that manifest as {\em limit cycle oscillations} (LCOs). In such applications, relevant large deflection models require nonlinear restoring forces that take into account higher order effects,  typically appearing via a  potential energy above the ``quadratic" level.  The choice of nonlinearity  dictates the qualitative features of the post-onset dynamics---which is to say, the dynamics in the  nonlinear regime of interest. Traditional large deflection theory for {\em panels} (i.e., fully restricted boundary conditions) is that of von Karman \cite{springer}, producing semilinear, cubic-type nonlinearities based on a quadratic strain-displacement law \cite{ciarlet,lagleug}.

The configuration of a {\em cantilever in axial flow}, whereby an elastic beam (or thin plate) has a flow of gas running {\em along its principal axis}, has been historically overlooked. Until about 15 years ago, interest in this configuration
was minimal  \cite{huang}, while interest in airfoil and panel flutter has been {\em immense} for more than 75 years \cite{survey2,dowell}.  A cantilever in axial flow is particularly prone to aeroelastic instability, with the bifurcation leading to sustained LCOs. This fact is useful in the development of vibration-based energy harvesting devices \cite{DOWELL,energyharvesting}. In such applications, dynamic instability is encouraged to extract energy from LCOs of the elastic cantilever, after the onset of flutter.  The main idea for large displacement harvesters is to capture  mechanical energy via piezoelectric laminates or patches (for which oscillating strains induce current \cite{energyharvesting}). The feasibility of such a system has been recently demonstrated with affixed piezo (SMART) materials \cite{DOWELL,energyharvesting,fab-han:01:DCDS,piezomass}. 
\begin{figure}[htp]
\begin{center}
\includegraphics[width=2in]{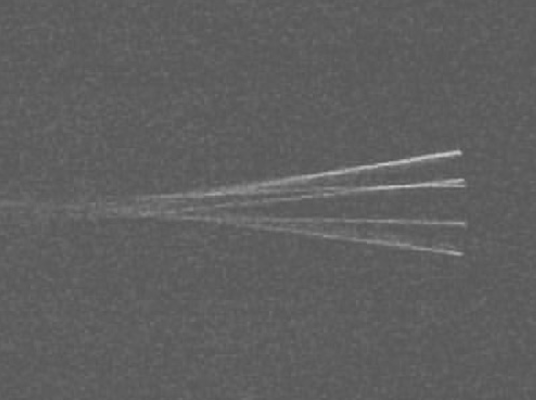}
\hspace*{0.55in}
\includegraphics[width=2in]{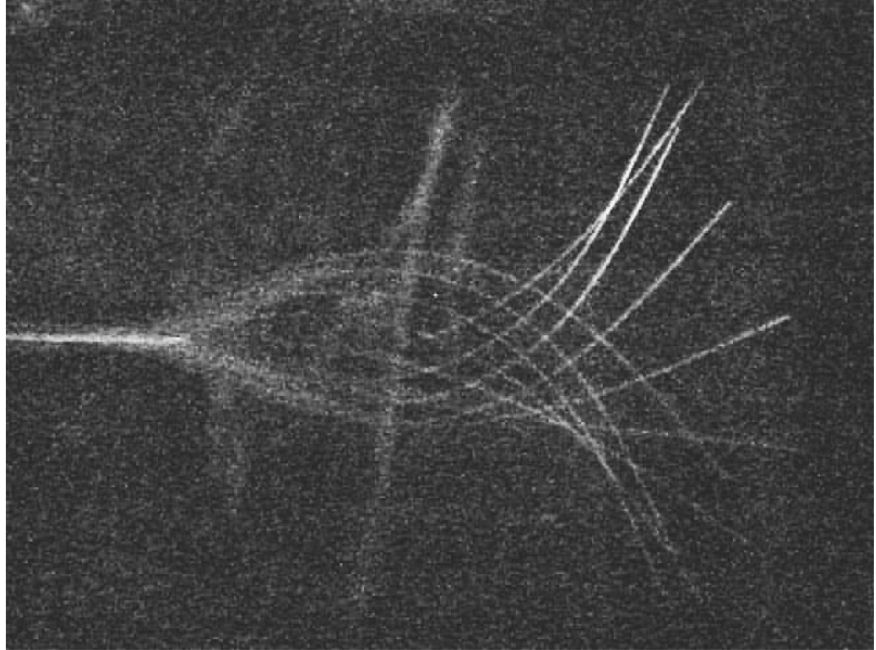}
\vskip-.15cm
\caption{Temporal snapshots of post onset, small amplitude LCO (left) and large amplitude LCO (right) for a cantilever in axial flow. Captured from wind-tunnel simulations \cite{dowell4,inext2}. In these experiments, the airflow runs from left to right.}
\end{center} 
\end{figure}

To effectively and efficiently harvest energy in this manner, one must be able capture and predict the post-onset behaviors of a cantilever, and thus, {\em one must have a viable model for the large deflections of cantilever}. Tradition nonlinear elastic models are based on local stretching effects, which are dominant when the entire structural boundary is restricted. However, for a cantilever, nonlinear effects are decidedly not due to stretching \cite{DOWELL,dowellmaterial,inext1,inext2}. An appropriate nonlinear cantilever model, then, should {account for in-plane displacements and variable stiffness and  inertia}. Thus, dominant nonlinear effects should come from the cantilever's {\em inextensibility}, rather than extensible effects (stretching). Inextensible cantilever models are rather recent \cite{inext1,dghw,inext2}, and have not been addressed---even in vacuo---in the rigorous mathematical literature. Thus, in this paper, we discuss the model derivation for an inextensible cantilever, and we produce a rigorous theory of solutions for the corresponding PDE model. The treatment at hand is a rigorous follow up to the recent \cite{dghw}, where an inextensible cantilever is discussed and analyzed numerically. In that paper, the results proven here were announced.

\subsection{Large Deflections of Cantilevers}
Cantilever flutter is associated with large deflections on the order of the beam's length \cite{dowell4,inext2}. For such deflections, structural nonlinearity arises in modeling as a byproduct of the inclusion of higher order terms in strain and energetic expressions. From a mathematical point of view, the presence of non-conservative flow-effects gives rise to a beam bifurcation (this is flutter \cite{survey2,HHWW,HTW,dghw}), which would yield exponential growth in time for a linear model, according to destabilized eigenvalues. To ensure that flow-destabilized trajectories remain bounded, one must consider a nonlinear restoring force, active for large displacements, slopes, or curvatures. 
	
The typical way to achieve this in the theory of elasticity is through the inclusion of cubic-type forces that arise from the effect of local stretching on bending  \cite{HTW,lagleug} . Since extensibility is not physically dominant for cantilevers, engineers have posited that the prevailing nonlinear forces result from inextensibility \cite{semler2,inext1,inext2}. Although enforcing inextensibility---as a nonlinear constraint---can be quite challenging, the recent modeling work \cite{inext1,inext2} utilizes a simplified approach that accounts for both nonlinear stiffness and nonlinear inertia effects. The result is a beam theory that is both quasilinear and nonlocal in space, as well as  implicit in the transverse acceleration, which is to say the dynamics are not a traditional second order evolution. The model was recently considered in the mathematical paper \cite{dghw}, where solutions were defined and qualitatively investigated from a numerical point of view under the influence of non-conservative flow effects. 
{\bf In the paper at hand we consider this inextensible elastic cantilever model and develop a rigorous well-posedness theory for smooth solutions}.

For the remainder of this treatment, let ~$(u,w) \in \mathbb R^2$ denote the Lagrangian displacement of a beam whose centerline equilibrium position is $x \in [0,L]$. This is to say that $u(x,t)$ is the axial (longitudinal) displacement from equilibrium and $w(x,t)$ is the transverse beam deflection.
 \begin{center}
\begin{tikzpicture}[scale=1.1,>=latex']
\draw[->,thick](0,0)--(6,0) node[below]{\footnotesize $x$};
\draw[->,thick](0,0)--(0,2.25) node[left]{\footnotesize $z$};
\draw[gray,line width=3pt] (0,0)--(5.2,0);
\draw (5.2,0.25)--(5.2,-0.25)node[below]{\footnotesize $L$};
\draw (0,0.25)--(0,-0.25)node[below]{\footnotesize $0$};
\draw[line width=3pt,smooth] plot[domain=0:4.5]({\x},{1.25-1.25*cos(0.125*pi*\x r)});
\draw[<->] (4,0)--node[pos=0.5,left]{\footnotesize $w$}(4,1.25);  
\draw[thick,dashed](4.5,0)--(4.5,2.2);
\draw[thick,dashed](5.2,0)--(5.2,2.2);
\draw[<->] (4.5,1.5)--node[pos=0.5,above]{\footnotesize $u(L)$}(5.2,1.5);
\end{tikzpicture}
\end{center}

Then, the {\bf inextensible equations of motion} of interest---derived later in Section \ref{modeling}---are:
\begin{align}\label{dowellnon1}
&\begin{cases} w_{tt} + D \partial^4_{x}w +k_2 \partial^4_{x}w_{t} +\mathbf A [w,u_{tt}] =p(x,t)& \text{ in } (0,L) \times (0,T);\\
w(0)=w_x(0) = 0;~~w_{xx}(L)=w_{xxx}(L)=0  & \text { in } (0,T);\\
w(t=0)=w_0,~~  w_t(t=0)=w_1, \end{cases}\\ \label{dowellnon2}
\text{with}~~~~&~~~~\mathbf A [w,u_{tt}] =-D\partial_x\big[w_{xx}^2w_x \big]+D\partial_{xx}\big[w_{xx}w_x^2\big] +\partial_x\left[w_x\int_x^L u_{tt}d\xi \right] \\
& \hskip1cm u_{tt}(x)=~-\int_0^x[w_{xt}^2+w_xw_{xtt}]d\xi.
\end{align}

We denote by $D$ the standard (mass-normalized) beam stiffness coefficient \cite{inext1}, $L$ is the beam's length, and $k_2 \ge 0$ corresponds to structural damping of Kelvin-Voigt type (discussed further in Section \ref{modeling}). The RHS $p(x,t)$ constitutes a given transverse pressure differential across the deflected beam.

We now mention the only (to the best of our knowledge) large deflection beam models in the PDE literature that accommodate a cantilever configuration. First is
 the extensible system found in \cite{lagleug}; in addition to standard elasticity assumptions, it invokes a quadratic strain-displacement law consistent with von Karman theory \cite{wonkrieg}. As a system, it is nonlinearly coupled in $u$ and $w$ via the beam's {\em extensionality}: ~$[u_x+\frac{1}{2}w_x^2]$. 
 \begin{equation} \label{LLsystem}
\begin{cases} u_{tt} -D_1\partial_x\left[u_x+\frac{1}{2}(w_x)^2\right]=0; \\\ds
(1-\alpha\partial_x^2)w_{tt}+D_2\partial_x^4 w+(k_0-k_1\partial_x^2) w_t -D_1\partial_x\left[ w_x(u_x+\frac{1}{2}w_x^2)\right] = p(x,t); \\[.1cm]
u(0)=0,~w(0)=w_x(0)=0;~~\left[u_x(L)+\frac{1}{2}w_x^2(L)\right]=0,~w_{xx}(L)=0, ~\alpha\partial_xw_t(L)=D_2w_{xxx}(L);\\
u(t=0)=u_0,~u_t(t=0)=u_1;~~w(t=0)=w_0,~w_t(t=0)=w_1. \end{cases}
\end{equation}
The above {\em Lagnese-Leugering} system is the beam analog of the so called {\em full von Karman plate equations} \cite{koch}.
Above, $D_1,D_2 >0$ are two different mass-normalized stiffness parameters and $\alpha\ge 0$ represents (linearized) rotational inertia in the filaments of the plate\footnote{$D_1$, $D_2$, and $\alpha$ are not necessarily independent in the presentation of these equations with physical coefficients.}. The coeffcients $k_i\ge 0$ correspond to damping of various strengths. The paper \cite{lagleug} considers a variant of this model that allows for boundary feedbacks and takes $k_0=k_1=0$; the results include nonlinear semigroup well-posedness, as well as a stabilization result.

One further consideration can be made as a simplification of the above system when we take negligible in-plane accelerations, $u_{tt} \approx 0$. Elementary simplifications then produce a scalar extensible cantilever, as was studied in \cite{HTW}.
In that reference, well-posedness and long-time behavior of the scalar system are analyzed in the presence of a non-conservative $p(x,t)$ representing an inviscid potential flow. A principal consideration in that analysis is whether $\alpha>0$ or $\alpha=0$. In the case where $\alpha>0$, stabilization-type estimates require damping strength to be tailored to the inertia, i.e., ~ $\alpha>0~\implies~k_1>0$. 

Each of these extensible beam models above is reasonable under certain modeling hypotheses, in particular contexts. However, {\bf as is clear from the engineering literature discussed above,  large deflections of a flow-driven cantilever should be appropriately modeled with inextensibility}. Lastly, we point to some work that addresses the 2 or 3-D deflections of inextensible rods in the seminal reference \cite{antman} (and many references therein), as well as \cite{antman2}. The principal inextensible models therein are linear of wave type (or their linearizations are, i.e., second order in space, perhaps with strong damping), and hence fundamentally distinct from the nonlinear models considered here which account for nonlinear inertia and stiffness effects.

\subsection{The Analysis At Hand}
In this section we outline the remainder of the paper and state our results informally. 

We conclude the introduction in Section \ref{tech}, where we discuss the novel mathematical contributions of this treatment and technical challenges of the analysis. The remaining preliminary sections are \ref{modeling} and \ref{funx}. Section \ref{modeling} presents a derivation of the equations of motion, as well as a brief discussion of structural damping central to this analysis. Section \ref{funx} presents the functional setup for the analysis and technical definitions of solutions used in subsequent well-posedness and stability proofs. 

Each of the remaining sections corresponds to a main result. In those latter sections, we will: (i) state a main result technically, using the terminology and concepts established in Section \ref{funx}; (ii) outline the proof briefly; and (iii) execute the proof in detail. Below, we give a short, nontechnical description of each of these main sections.

Section \ref{noinertia} provides a {\em local\footnote{local, in the sense that the time of existence depends on the size of the initial data in the associated solution topology.} well-posedness for strong solutions} for \eqref{dowellnon1} {\em in the absence of nonlinear inertia with no imposed damping}. The resulting system is a conservative, quasilinear beam system.  The result---Theorem \ref{withoutiota}---is built upon higher order energy estimates used to obtain additional compactness needed in executing a Galerkin procedure with cantilever eigenfunctions..

Section \ref{inertia} provides a {\em local well-posedness result---in the presence of nonlinear inertia---for strong solutions} in Theorem \ref{withiota}; in this case, {\em some damping is required} ($k_2>0$) to obtain estimates in the construction of solutions. The damping addresses the nonlocal and implicit nature of the inertial terms. 

Section \ref{global} provides our final main result: {\em global existence of strong solutions for small data} (in the presence of both inertia and damping). This result is typical for quasilinear hyperbolic dynamics, whereby the presence of damping and small data allow for stabilization estimates that ensure exponential decay, yielding an arbitrary time of existence.

Lastly, Section \ref{open} gives a brief discussion of open problems related to the model at hand, and Section \ref{ack} gives the authors' declarations and acknowledgements.

\subsection{Novel Contributions and Technical Challenges} \label{tech}
The model we focus on here only appeared for the first time in the context of elastic cantilevers in the recent papers \cite{inext1,inext2}. These (and other earlier works focusing on inextensible pipes conveying fluid such as \cite{semler2,paidoussis}) are largely engineering-oriented, making use of finite dimensional analyses via modal truncation or the Rayleigh-Ritz method at the energetic level. And, although the present authors' recent work \cite{dghw} discusses solutions and states well-posedness theorems, it is numerically-focused, without proofs. Thus, {\bf the existing body of work on inextensible elasticity does not address}:
\begin{itemize}
\item a construction of PDE solutions (at the infinite dimensional level)
\item (Hadamard) well-posedness
\item  the effects of damping in relation to nonlinear inertial terms
\item  the time of existence for solutions or quantitative restrictions on data.
\end{itemize}
To the knowledge of the authors, {\em this is the first treatment to rigorously address the theory of solutions for inextensible elasticity.}  

Although the central problem here is a 1-D beam, the following issues render the analysis quite challenging. Some of these issues are common for quasilinear dynamics, but many are not (e.g., those associated with nonlinear inertia), and we also point to the non-trivial {\em interaction} between (high order) free boundary conditions, nonlinear stiffness, and nonlocal inertial terms. 

The {\bf technical challenges faced in the analysis} are:
\begin{itemize}\setlength\itemsep{.1em}
\item Despite a good, conservative structure for the baseline equations of motion, quasilinear and semilinear terms do not straightforwardly admit (semigroup or fixed point) perturbation methods.
\item The term ~{\small $\ds \partial_{x}[w_{xx}^2w_x]$} precludes weak limit point identification at the baseline energy level. 
\item Nonlinear terms and free boundary conditions (i) do not readily permit differentiation of the equations to obtain higher energy estimates, and (ii) convolute the standard technique of {\em going back through the equations} to trade time and space regularity. 
\item Nonlinear inertial terms (i) present themselves at a level above finite energy, (ii) are also nonlocal, and (iii) are implicit terms in $w_{tt}$, and hence do not constitute a traditional evolution. The truncated version of the dynamics is in fact {\em quasilinear in time} \eqref{sep2}.
\end{itemize}
\noindent In addressing the issues above, we note the following specific {\bf novelties of this analysis}:
\begin{itemize}\setlength\itemsep{.2em}
\item The sequence of multipliers used to close estimates in obtaining compactness are  non-standard, including the use of stabilization-type multipliers.
\item A novel decomposition of nonlinear differences exploits polynomial symmetry for a  non-obvious uniqueness proof, relying critically on  smooth trajectory estimates obtained earlier. 
\item The inclusion of damping to permit appropriate estimates for well-posedness of the full model is a peculiarity, one that, at present, we cannot avoid. On the other hand, including damping in the full model \eqref{dowellnon1}--\eqref{dowellnon2} successfully obtains global solutions for small data.
\end{itemize}
\section{PDE Model Derivation}\label{modeling}

Recall that $w(x,t)$ is the transverse deflection and $u(x,t)$ is the in-axis displacement from equilibrium of a beam at $t \in [0,T]$ and a spatial point $x \in [0,L]$. Let ~$\varepsilon(x,t)$ describe the {\em axial strain} along the centerline of the beam. In this section we derive the in-vacuo equations of motion via Hamilton's principle. The inextensibility condition is simplified to an {\em effective inextensibility constraint}, which is enforced via a Lagrange multiplier. Our derivation tracks the one first appearing in \cite{inext1}, and we point to the earlier references \cite{semler2,paidoussis} for inextensibility treated in the context of pipes conveying fluid.

	\subsection{Inextensibility}
According to classical work (e.g., \cite{stoker,semler2}) we have the Lagrangian strain relation \cite{semler2}
$$\big[1+\varepsilon]^2=(1+u_x)^2+w_x^2.$$
When the beam is {\em inextensible}, we take $\varepsilon(x,t)=0$, which immediately yields the  condition
\begin{equation}\label{realinext}
1=(1+u_x)^2+w_x^2.
\end{equation}

From \cite{semler2,inext1,ciarlet}, large deflections dictate that higher order nonlinear terms should be retained, namely, {\em up to cubic order}. (For variational purposes, then, energetic expressions will be accurate up to quartic order.) By expanding the inextensibility condition \eqref{realinext}, we see  that if $w_x \sim \epsilon$, we will have $u_x \sim \epsilon^2$:
$$2u_x+u_x^2+w_x^2=0.$$
As in \cite{inext1},  we drop $u_x^2 \sim \epsilon^4$, owing to its relative order being above cubic. Approximating, then
 $$0=2u_x+w_x^2 ~~\implies~~ u_x=-\frac{1}{2}w_x^2.$$ This yields what we henceforth refer to as the {\em effective inextensibility constraint}, providing a direct relationship between $u$ and $w$: \begin{equation}\label{inext} u(x,t)=-\frac{1}{2}\int_0^x[w_x(\xi,t)]^2d\xi.\end{equation}

	\subsection{Nonlinear Elasticity}
	
Define the elastic potential energy ($E_P$) via beam curvature $\kappa$ and constant stiffness $D$ (flexural rigidity) \cite{semler2} in the standard way
 $$E_P \equiv \frac{D}{2}\int_0^L\kappa^2dx.$$ 
 Owing to inextensibility, we may take the beam's displaced state, $\{(x+u(x),w(x))~:~x \in [0,L]\}$, as a parametrized curve. The standard expression for curvature in this scenario is: 
 $$\kappa = \frac{(1+u_x)w_{xx} - u_{xx}w_x}{[(w_x)^2 + (1+u_x)^2]^{3/2}}.$$
 
From inextensibility \eqref{realinext} (without approximation), we see that the denominator is one. From \eqref{realinext},  we can also write $u_{x}= \sqrt{1 - w^2_{x}} - 1$ which leads to $u_{xx}=-w_{x}w_{xx} (1-w^2_{x})^{-1/2}$. Substituting in $\kappa$, we obtain:
\begin{align*}
\kappa = (1+u_{x})w_{xx} - w_{x}u_{xx} =&~ (1-w^2_{x})^{1/2}w_{xx} + w_{x}(w_{x}w_{xx}(1-w^2_{x})^{-1/2}) =  \frac{w_{xx}}{(1-w^2_{x})^{1/2}} .
\end{align*} 
  To be consistent with the approximation that yields \eqref{inext}, we must retain terms at the level of $w_x^2$ in approximating $E_P$ \cite{inext1,semler2}. Via a Taylor expansion, we take  ~$\ds \kappa \approx w_{xx}\sqrt{1+w_x^2}.$
 \begin{remark} This point distinguishes the derivation from linear elasticity in $w$, where $\kappa \approx w_{xx}$.\end{remark}

 Finally, the effective potential energy for the problem at hand becomes
 \begin{equation}\label{potentialE} E_P = \frac{D}{2}\int_0^L w_{xx}^2 \left( 1+ w_{x}^2 \right)dx.\end{equation}
 The kinetic energy ($E_K$) for the dynamics taken in the standard way for a mass-normalized beam:
 \begin{equation}\label{kineticE} E_K = \frac{1}{2}\int_0^L\left( u^2_{t} + w^2_{t} \right) dx.\end{equation}
	
	\subsection{Hamilton's Principle}
	
To derive the equations of motion and the associated boundary conditions, we utilize Hamilton's Principle \cite{inext1,lagleug}. We consider displacements $u$ and $w$ (and hence virtual displacements $\delta u$ and $\delta w$) which are smooth and respect the essential boundary conditions at $x=0$, namely:
\begin{align*} &w,~w_x,~\delta w, ~\delta w_x:~ 0 ~\text{ at }~x=0;&~~~u,~\delta u:~0 ~\text{ at }~x=0.& \end{align*}
The effective inextensibility constraint, $f \equiv u_{x} + (1/2)w^2_{x}=0$, will be appended to the system via a {\em Lagrange multiplier $\lambda$}. Thus, we express the Lagrangian in the usual way:
\begin{equation}
\label{Lagrangian}
\mathcal{L} = E_{K} - E_{P} + \int_0^L \lambda f dx.
\end{equation}

Taking the variation of $(\ref{Lagrangian})$ and performing the necessary integration by parts with respect to both time and space, Hamilton's principle provides the Euler-Lagrange equations of motion and the associated boundary conditions. Virtual changes are considered for both displacements, $u$ and $w$.\footnote{Note that virtual change in $\lambda$ simply produces the effective inextensibility constraint.}

To minimize the Lagrangian, we set ~{\small $\delta \int_{t_1}^{t_2}\mathcal L  dt \equiv 0$}~ and utilize the arbitrariness of the virtual changes $\delta u$ and $\delta w$. For interior terms, we gather virtual changes and set the totals equal. The relevant calculation pertains to the $E_P$:
\begin{equation}\label{delE} \delta E_P = D\int_0^L\big[(1+w_x^2)w_{xx}\big]\delta w_{xx}+\big[(w_xw_{xx}^2)\big]\delta w_xdx.\end{equation}
Integrating by parts until only $\delta w$ appears, and utilizing the arbitrariness of the virtual changes, we obtain the unforced equations of motion:
\begin{align}
\label{deltau}
\text{from }~\delta u:& ~~~~u_{tt} + \lambda_x = 0\\
\label{deltaw}
\text{from }~\delta w:& ~~~~w_{tt} - D \partial_{x} \left( w^2_{xx} w_{x} \right) + D \partial_{xx} \left( w_{xx} \left [ 1 + w^2_{x} \right ] \right) + \partial_x \left( \lambda w_{x} \right) = 0.
\end{align}

For the (natural) boundary conditions at $x=L$, the relevant calculations pertain to $w$ (the $u$ and $\lambda$ conditions can then be inferred). In the integration by parts proceeding from \eqref{delE}, we obtain by the arbitrariness of $\delta w$, $\delta w_x$ and $\delta u$ at $x=L$:
\begin{align}\label{firstnatural}
& \lambda(L) = 0;~~~
(1+w_x^2(L))w_{xx}(L) = 0 ; ~~~
(1+w_x^2(L))w_{xxx}(L)+w_x(L) w^2_{xx}(L)=0.&
\end{align}
From \eqref{firstnatural}, we infer that $w_{xx}(L)=w_{xxx}(L)=0$---the standard free boundary conditions.
\begin{remark} This fact is both critical and somewhat surprising, as the nonlinear effects (and their previously discussed simplifications) do not alter the standard {\em linear} boundary conditions for a cantilever. Note that in extensible elasticity, this is not always the case \cite{springer,HTW}.\end{remark}

Now, using the equation \eqref{deltau} we can formally write 
\begin{equation*}
\lambda(x) = - \int_0^x u_{tt}(\xi) d\xi + \lambda(0).
\end{equation*}
We then utilize the fact that $\lambda(L)=0$ to conclude ~$\ds
\lambda(0) = \int_0^L u_{tt}(\xi) d \xi.$
From this we  deduce:
\begin{equation*}
\lambda(x) =  \int_x^L u_{tt}(\xi) d\xi.
\end{equation*}

Substituting the above expression in \eqref{deltaw} we finally obtain the equations of motion \eqref{dowellnon1}--\eqref{dowellnon2}, and the corresponding boundary conditions for $w$, as well as for $u$ and $\lambda$ at $x=L$.

	\subsection{Damping}
Discussion of damping in beams goes far back in both the engineering literature \cite{bolotin,semler2} as well as the mathematical literature \cite{beamdamping,che-tri:89:PJM}. In the treatment at hand, some additional velocity regularization is needed to address the nonlinear inertial terms; namely $w_t$ must be ``better" than $C([0,T];L^2(0,L))$. We obtain this by imposing Kelvin-Voigt type structural damping. Note, this type of damping is in fact invoked in the engineering-oriented references \cite{paidoussis,semler2} for improving numerical simulations. The recent \cite{dowellmaterial} addresses local damping and stiffness in a cantilever from a modeling and experimental point of view.
 
Let us here refer to the damped, linear Euler-Bernoulli beam equation
$$w_{tt}+D\partial_x^4 w+[k_0-k_1\partial_x^2+k_2\partial_x^4]w_t = p.$$ Weak (frictional) damping has the form ~{\small $ k_0 w_t$},  providing no velocity regularization. In the elasticity context,  Kelvin-Voigt damping ~{\small $k_2\partial_x^4 w_t$} is strain-rate type, and mirrors the principal (linear) operator, providing a strong dissipative effect. In fact, this damping transforms the underlying dynamics to be of parabolic type \cite{che-tri:89:PJM,redbook}.  Square root-like damping, {\small $-k_1\partial_x^2w_t$} \cite{fab-han:01:DCDS}, interpolates between the previous two damping types. (See \cite{dghw,HTW} for more discuss of damping in the context of nonlinear cantilevers.)
 
 \begin{remark}
Square root-type damping corresponds to modal damping models \cite{dowell}, as one finds frequently in the engineering literature \cite{bolotin,follower, McHughIFASD2019}.  However, the boundary conditions for a given problem affect the physical interpretation of square-root type damping; in \cite{beamdamping} it is noted that square-root type damping has a questionable physical interpretation for a cantilevered configuration. See also \cite{HHWW} for more recent discussion.  In the analysis here, we utilize the (strong) Kelvin-Voigt  damping.
	\end{remark}
	\begin{remark}
	It is of course of interest to discuss damping in the context of the stiffness-only model $\iota=0$. On the other hand, in this treatment the damping is primarily included to mitigate the effects of nonlinear inertia. We discuss this further in Section \ref{open}.	\end{remark}

\section{Functional Setup and Key Notions}\label{funx}
\subsection{Equations of Motion} \label{EquationofMotion}
With the derivation above, we recall the equations of motion, allowing for Kelvin-Voigt damping $k_2 \ge 0$, and including {\em flags} for the nonlinear terms:
\begin{equation}\label{dowellnon*}
\begin{cases} \displaystyle w_{tt}+D\partial_x^4w+ k_2 \partial_x^4 w_t+\mathbf A_{\iota,\sigma} (w,u_{tt}) =p(x,t)& \text{ in } (0,L) \times (0,T)
\\ w(t=0)=w_0(x), w_t(t=0)=w_1(x) \\ w(x=0)=w_x(x=0)=0;~w_{xx}(x=L)=w_{xxx}(x=L)=0,
\end{cases}
\end{equation}
\begin{align}\label{dowellnon2*}
\mathbf A_{\iota,\sigma} (w,u_{tt}) =&- \sigma D\partial_x\big[w_{xx}^2w_x \big]+ \sigma D\partial_{xx}\big[w_{xx}w_x^2\big] + \iota \partial_x\left[w_x\int_x^L u_{tt}(\xi)d\xi \right] \\
u(x)=&~-\frac{1}{2}\int_0^x \left [w_x(\xi) \right ]^2d\xi.\label{dowellnon3*}
\end{align}
To simplify terminology, we use the following language  from here on:
\begin{align*}\text{\bf   [NL Stiffness]}=&~ -D\partial_x\big[w_{xx}^2w_x \big]+D\partial_{xx}\big[w_x^2w_{xx}\big] \\ \text{\bf   [NL Inertia]}=&~ \partial_x\left[w_x\int_x^L u_{tt}(\xi)d\xi \right],\end{align*} the latter of which is nonlocal, when written in $w$ through \eqref{dowellnon3*}.
The flags, $\iota,\sigma=0 \text{ or } 1$, in \eqref{dowellnon2*}, easily isolate particular nonlinear effects. This is to say, when $\iota=0$, we say that {\bf \small [NL Inertia]} is turned off.

\begin{remark}\label{quasi}
For convenience, we note two expansions.
First
$$\text{\bf  [NL ~Stiffness]}=D[w_{xxx}^3+4w_xw_{xx}w_{xxx}+w_x^2\partial_x^4 w],$$
which highlights the quasilinear nature of the PDE (with high order semilinearity).
Secondly,
\begin{equation} \label{NLInertia}
{\bf  [NL ~Inertia]} =-w_xu_{tt}+ w_{xx}\int_x^L u_{tt}d\xi,~~\text{with}~~~~ u_{tt}=-\int_0^x[w_{xt}^2+w_xw_{xtt}]d\xi,
\end{equation}
which highlights that, when closed in $w$, (i) there is high temporal regularity required to interpret the strong form of the PDE, and (ii) the equation is implicit in the acceleration $w_{tt}$.
\end{remark}

\subsection{Notation and Conventions}

For a given spatial domain $D$,
its associated $L^{2}(D)$ will be denoted as $||\cdot ||_D$ (or simply $%
||\cdot||$ when the context is clear). Inner products in a Hilbert space  are written $(\cdot ,\cdot)_{H}$ (or simply $(\cdot ,\cdot)$ when $H=L^2(D)$ and the context is clear). We will also denote pertinent duality pairings as $%
\left\langle \cdot ,\cdot \right\rangle _{X\times X^{\prime }}$, for a given
Banach space $X$, as well as the general notation for a norm, $||\cdot||_X$. The open ball of radius $R$ in $X$ will be denoted $B_R(X)$. The space $H^{s}(D)$ will indicate the standard Sobolev space of
order $s$, defined on domain $D$, and $H_{0}^{s}(D)$ will be the closure
of $C_{0}^{\infty }(D)$ in the $H^{s}(D)$-norm $\Vert \cdot \Vert
_{H^{s}(D)}$, also written as $\Vert \cdot \Vert _{s}$. For $\Gamma \subset \partial D$, boundary restrictions $u\big|_{\Gamma}$ are taken in the sense of the trace theorem for $u \in H^{1/2^+}(D).$

The constant $C$ we take to mean a generic constant that may change from line to line. In estimates where dependencies are critical, we will write $C(q_i)$, where $q_i$ are relevant quantities. Additionally, in our involved estimates below, for situations where ~$\ds ||q_1||_X \le C ||q_2||_Y$ for some quantities $q_1,q_2$ in spaces $X$ and $Y$, with $C$ having no critical dependencies, {\em we will simply write ~{\small $\ds ||q_1||_X \lesssim ||q_2||_Y.$}}

Finally, we will frequently make use of standard Sobolev embeddings (in particular, that of \newline  $H^{1/2^+}(0,L) \hookrightarrow L^{\infty}(0,L)$) as well as the Sobolev interpolation inequalities \cite{evans}.

\subsection{Energies}\label{energiessec}
With reference to Section \ref{modeling}, we employ the following energies:
\begin{equation}\label{energiesdef}
E(t) \equiv E_K(t)+E_P(t) \equiv \frac{1}{2}\left[||w_t||^2+\iota ||u_t||^2\right]+\dfrac{D}{2}\left[||w_{xx}||^2+\sigma ||w_xw_{xx}||^2\right].
\end{equation}
{\em The energies now include the nonlinear flags.} This can be written in $w$ explicitly using ~{\small $u_t=-\int_0^xw_{x}w_{xt}d\xi$}.

In the unforced situation, with $p(x,t) \equiv 0$, the formal energy identity is obtained by the velocity multiplier $w_t$ on \eqref{dowellnon*} taken with the relation \eqref{dowellnon3*}, yielding
$$E(t) +k_2 \int_s^t||w_{xxt}||_{L^2(0,L)}^2d\tau= E(s),~~0 \le s \le t.$$

Higher order energies corresponding to smooth solutions will be defined in later sections.

	\subsection{Spaces and Operators}\label{spacessec}

The principal state space for cantilevered beam displacement takes into account the clamped conditions:
\[
  H^2_* = \{ v \in H^2(0,L) :  v(0) =0,\quad v_x(0) = 0 \}.
\]
This space is equipped with an $H^2(0,L)$ equivalent inner product:
\begin{equation}\label{H2*prod}
   (v,w)_{H^2_*} = D (v_{xx}, w_{xx}).
 \end{equation}
Denoting $R$ as the Riesz isomorphism $H^2_*\to [H^2_*]'$, we see it is given by:
 \begin{equation}\label{def:R}
 R(v)(w) \equiv  (v,w)_{H^2_*}\,.
 \end{equation}
 
This framework is conveniently induced by the  generator of the linear cantilever dynamics:
\[
  \cA : \mathcal D(\cA)\subset L^2(0,L) \to L^2(0,L),~~ \cA f \equiv D\partial_x^4 f,  
\]
\begin{equation}\label{def:cA0}
\mathcal D(\cA) = \{ w \in H^4(0,L) : w(0) =w_x(0)=0; \; w_{xx}(L)=w_{xxx}(L) = 0\}.
\end{equation}
From this we have in a standard fashion \cite{redbook}: 
\[
  \mathcal D(\cA^{1/2}) = H^2_*, \quad \mathcal D(\cA^{-1/2}) = [H^2_*]' ~\text{ and }~ \cA^{1/2} = R \quad \text{ in \eqref{def:R}}.
\]
Then $(u,\cdot)_{H^2_*}$ is the extension of $(\cA u, \cdot)$ from $\mathcal D(\cA)$ to $H^2_*$ which gives \eqref{H2*prod}.

Using the above spaces we can define the appropriate state space(s) for our dynamics. The finite energy space will be denoted as: $$\mathscr H \equiv \cD(\cA^{1/2})\times L^2(0,L) = H^2_*\times L^2(0,L),$$ with the inner product $y=(y_1,y_2),~ \tilde y=(\tilde y_1,\tilde y_2) \in \mathscr H$
\begin{equation}\label{Hal-prod}
  (y,\tilde y)_{\mathscr H} = (y_1,\tilde y_1)_{H^2_*} + (y_2,\tilde y_2)_{L^2(0,L)}.
\end{equation}

In our discussions, we will also require stronger state spaces (corresponding to {\em strong} solutions):
\begin{align}\label{strongspace}
\mathscr H_s \equiv & ~\mathcal D (\mathcal A) \times \mathcal D(\mathcal A^{1/2}), ~\text{ for} ~ \iota=k_2=0, \\[.1cm] 
\mathscr H_s^I \equiv &~\mathcal D (\mathcal A) \times \mathcal D(\mathcal A), ~\text{ for }~ \iota=1, k_2>0.
\end{align}
The norm in $\mathscr H_s$ is taken (equivalent\footnote{The topological equivalences on $\cD(\cA)$ follow from repeated applications of Poincar\'e.} to the natural operator-induced norm) to be:
\begin{align*} ||y||_{\mathscr H_s}^2 =&~ ||\partial_x^4 y_1||^2+||\partial_{x}^2y_2||^2,  ~\text{ for} ~ \iota=k_2=0, \\[.1cm] 
||y||_{\mathscr H^I_s}^2 =&~ ||\partial_x^4 y_1||^2+||\partial_{x}^4y_2||^2, ~\text{ for }~ \iota=1, k_2>0.
\end{align*}
	
	\subsection{Mode Functions}\label{modefunx}
We will utilize the so called {\em in vacuo modes} (eigenfunctions) associated to the operator $\cA$. Specifically, we work with the Euler-Bernoulli cantilever eigenfunctions as our approximants in $H^2_*$; namely, the eigenvalues and eigenfunctions {\small $\{\lambda_n,s_n(x)\}_{n=1}^{\infty}$} of $\mathcal A$ on $L^2(0,L)$. These modes and associated eigenvalues are computed in an elementary way. 
The $C^{\infty}([0,L])$ mode shapes take the form
\begin{equation}\label{modefunctions}s_n(x)\equiv c_n[\cos(\kappa_nx)-\cosh(\kappa_nx)]+ C_n[\sin(\kappa_nx)-\sinh(\kappa_nx)], ~\kappa_n^4=\lambda_n,\end{equation}
where the $\kappa_n$  are obtained (numerically) by solving the associated characteristic equation $$\cos(\kappa_nL)\cosh(\kappa_nL)=-1.$$  The $C_n$ are obtained by invoking the boundary conditions:
\[ C_n=\dfrac{-c_n\big(\cos(\kappa_nL)+\cosh(\kappa_nL)\big)}{\sin(\kappa_nL)+\sinh(\kappa_nL)},\]
and the $c_n$ values are chosen to normalize the  functions in  $L^2(0,L)$.  

Via the spectral theorem, these functions are {\em complete} and {\em orthonormal} in $L^2(0,L)$, as well as complete and orthogonal in $H_*^2$ (with respect to $(\cdot,\cdot)_{H^2_*}$). These eigenvalues have the property that $0 < \lambda_1 < \lambda_2 <... \to \infty.$
	
	\subsection{Definition of Solutions} \label{sols}

We provide the natural setting for the weak formulation of the problem; this will yield the appropriate starting point for our Galerkin procedure to construct solutions. Ultimately, we will construct weak solutions that possess additional regularity; these, in turn, will be strong solutions.

We begin with the {\em weak form} of \eqref{dowellnon*} which we define for functions that are smooth in time:
\begin{align} \label{weakform}
\left( w_{tt}, \phi \right) +D(w_{xx},\phi_{xx}) +k_2 (w_{xxt},\phi_{xx}) + \sigma D\big(w_{xx}w_x,w_x\phi_{xx}\big) +  \sigma D(w_{xx}w_x&,w_{xx}\phi_x) 
- \iota \left( w_x \int_x^L u_{tt}, \phi_x \right) \nonumber \\ & = (p,\phi), ~\forall~\phi \in H^2_*.
\end{align}
When $\sigma>0$, the {\bf \small [NL Stiffness]} is in force; similarly, when $\iota>0$, {\bf \small [NL Inertia]} is in force. When $k_2>0$, Kelvin-Voigt damping is imposed. 

We now give precise definitions of solutions making reference to the weak form \eqref{weakform} above:
\begin{definition}\label{sol1}
We say a {\em weak} solution to \eqref{dowellnon*}, with $k_2 = \iota = 0$ and $ \sigma=1$ is a function $w$, with
\begin{equation*}
w \in L^2\left(0,T;H_*^2\right);~w_t \in L^2\left(0,T;L^2(0,L)\right);~w_{tt} \in L^2\left(0,T;[H_*^2]'\right)
\end{equation*}
that satisfies \eqref{weakform}, replacing  $L^2(0,L)$ inner products with $(H^2_*,[H^2_*]')$ duality pairings where necessary.

Moreover, for any $\chi \in H^2_*$, $\psi\in L^2(0,L)$, we require
  \begin{equation}\label{weak-ic}
    (w,\chi)_{H^2_*}\big|_{t\to 0^+} = (w_0, \chi)_{H^2_*},\quad (w_t,\psi)\big|_{t\to 0^+} = (w_1,\psi).
  \end{equation}

\end{definition}

\begin{definition}\label{sol2}
A {\em weak} solution to \eqref{dowellnon*} with $k_2>0$ and $\iota=\sigma=1$ is a function $w$, with
\begin{equation*}
w \in L^2\left(0,T;H_*^2\right); ~ w_t \in L^2\left(0,T;H^2_*\right); ~ w_{tt} \in L^2\left(0,T;[H_*^2]'\right),
\end{equation*}
such that \eqref{weakform} holds, replacing  $L^2(0,L)$ inner products with $(H^2_*,[H^2_*]')$ duality pairings where necessary.

Moreover, for any $\chi \in H^2_*$, $\psi\in L^2(0,L)$, we require
  \begin{equation}\label{weak-ic*}
    (w,\chi)_{H^2_*}\big|_{t\to 0^+} = (w_0, \chi)_{H^2_*},\quad (w_t,\psi)\big|_{t\to 0^+} = (w_1,\psi).
  \end{equation}
\end{definition}

\begin{remark}
For $k_2>0$ and $\iota>0$, the definition of weak solution is self-consistent; this is to say, for such a function $w$, all terms in \eqref{weakform} are well-defined. 
We note that for $k_2=0$, there are complications with the a priori regularity of $w_t \in L^2(0,T;L^2(0,L))$ and the interpretation of the {\bf \small [NL Inertia]} terms. \end{remark}

Now, we define strong solutions as weak solutions with additional regularity.
\begin{definition}\label{strongsol1}
A {\em strong} solution to \eqref{dowellnon*} with $k_2 = \iota = 0$ and $ \sigma=1$ is a weak solution (as in Definition \ref{sol1}) with the additional regularity
$$w \in L^2 \left(0,T; \mathcal D(\mathcal A)\right);~w_t \in L^2(0,T;H^2_*);~w_{tt} \in L^2 \left( 0,T; L^2(0,L) \right).$$ 
\end{definition}

\begin{definition}\label{strongsol2}
A {\em strong} solution to \eqref{dowellnon*} with $k_2>0$, $\iota=\sigma=1$ is a weak solution (as in Definition \ref{sol2}) with the additional regularity
$$w \in L^2 \left(0,T;\mathcal D(\mathcal A) \right);~w_{t} \in L^2 \left( 0,T; \mathcal D(\mathcal A))\right);~w_{tt} \in L^2 \left( 0,T; H^2_{*}  \right).$$
\end{definition}
As we will show below in Corollaries \ref{strongisstrong} and \ref{StrongInertia}, strong solutions will satisfy the pointwise form of the PDE in \eqref{dowellnon*} as well as the higher order boundary conditions at $x=L$.

\section{The Case of Only Stiffness Effects: $\sigma=1$, $\iota=k_2=0$}\label{noinertia}
\subsection{Precise Statement of the Theorem}
\begin{theorem}\label{withoutiota}
Take $\sigma =1$ with $\iota=k_2=0$, and consider $ p \in H^2_{loc}(0,\infty;L^2(0,L))$. For smooth data $(w_0,w_1) \in \mathscr H_s = \mathcal D(\mathcal A)\times H_*^2$, strong solutions exist up to some time $T^*(w_0,w_1,p)$. For all $t \in [0,T^*)$, the solution $w$ is unique and obeys the energy identity
$$E(t) = E(0)+\int_0^t (p,w_t)_{L^2(0,L)}d\tau.$$ 

Restricting to $B_R(\mathscr H_s)$, for any $T<T^*(R,p)$ solutions depend continuously on the data in the sense of $C([0,T];\mathscr H)$ with an estimate on the difference of two trajectories, $z=w^1-w^2$:
$$\sup_{t \in [0,T]}\big|\big|(z(t),z_t(t))\big|\big|_{\mathscr H} \le C(R,T)\big|\big|\big(z(0),z_t(0)\big)\big|\big|_{\mathscr H},~~\forall~t \in [0,T].$$
\end{theorem}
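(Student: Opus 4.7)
The plan is a spectral Galerkin approximation in the in vacuo modes $\{s_n\}$ of Section \ref{modefunx}. Set $w^N(t,x)=\sum_{n=1}^N a_n^N(t)s_n(x)$, project \eqref{weakform} (with $\iota=k_2=0$, $\sigma=1$) onto $\mathrm{span}\{s_1,\ldots,s_N\}$, and project the data $(w_0,w_1)\in\mathscr H_s$ accordingly. Because the nonlinearity is polynomial in $w^N$ and its spatial derivatives, the resulting ODE system has local-in-time smooth solutions. The first layer of a priori bounds comes from testing with $w_t^N$: this yields the baseline energy identity and hence a bound on $E^N(t)$, giving $w^N\in L^\infty(0,T;H^2_*)$ and $w_t^N\in L^\infty(0,T;L^2(0,L))$. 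As noted in Section \ref{tech}, this is insufficient to identify the weak limit of the stiffness nonlinearity, because $\partial_x[(w_{xx}^N)^2 w_x^N]$ is a triple product of objects that are only controlled in $L^2$ or $L^\infty$ with no gain.

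To secure the needed compactness I would derive a higher energy estimate adapted to $\mathscr H_s$. The natural route is to differentiate the finite-dimensional equation in $t$ and test with $w_{tt}^N$, producing an inequality of the form
\begin{equation*}
\frac{d}{dt}\mathcal E_2^N(t) \lesssim \mathcal P\bigl(\mathcal E_2^N(t),\,E^N(t)\bigr) + \|p_t\|^2,
\end{equation*}
for some polynomial $\mathcal P$, where $\mathcal E_2^N$ controls $\|w_{tt}^N\|^2 + \|w_{xxt}^N\|^2$ plus appropriate nonlinear correction terms; elliptic regularity applied to $D\partial_x^4 w^N = -w_{tt}^N + p - \mathbf A_{1,0}(w^N,0)$ then promotes this to an $L^\infty_t\mathcal D(\cA)$ bound for $w^N$, where the nonlinear forcing is estimated using the Sobolev embedding $H^2_*\hookrightarrow L^\infty$. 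The superlinear differential inequality delivers a time $T^*(w_0,w_1,p)$ on which all of the above bounds are uniform in $N$. With these, Aubin--Lions compactness upgrades weak-$*$ limits to strong convergence in an intermediate topology such as $C([0,T];H^{3-\epsilon})$, which is amply sufficient to pass to the limit in every polynomial nonlinearity and to identify the limit $w$ as a strong solution in the sense of Definition \ref{strongsol1}. The energy identity follows from the approximate identities by lower semicontinuity of $E(t)$ together with the strong convergence of $w_t^N$.

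For uniqueness and continuous dependence at the $\mathscr H$ level, I would take two strong solutions $w^1,w^2\in B_R(\mathscr H_s)$ on a common interval $[0,T]\subset[0,T^*)$, set $z=w^1-w^2$, and test the difference equation with $z_t$ in the $L^2$ pairing. The nonlinear stiffness differences decompose by polynomial symmetry as
\begin{equation*}
(w_{xx}^1)^2 w_x^1 - (w_{xx}^2)^2 w_x^2 = (w_{xx}^1+w_{xx}^2)w_x^1\, z_{xx} + (w_{xx}^2)^2\, z_x,
\end{equation*}
and analogously for the sister term $\partial_{xx}[w_{xx}w_x^2]$. Since $w^1,w^2$ remain uniformly in $\mathcal D(\cA)$ on $[0,T]$, the coefficients $(w_{xx}^1+w_{xx}^2)w_x^1$ and $(w_{xx}^2)^2$ are uniformly bounded in $L^\infty_{t,x}$ by a constant depending on $R$. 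Careful integration by parts, respecting the free conditions at $x=L$ through the test function $z_t\in H^2_*$, followed by Gr\"onwall, yields the claimed contraction estimate on $\|(z,z_t)\|_{\mathscr H}$.

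The hard part will be the higher-order estimate. The multiplier $w_{tt}^N$ interacts with the cubic stiffness $\partial_x[w_{xx}^2 w_x]$ (after time differentiation) through triple products such as $w_{xx}w_{xxt}w_x w_{tt}$ and $w_{xx}^2 w_{xt} w_{xxtt}$, which must either be absorbed into $\mathcal E_2^N$ via judiciously chosen nonlinear corrections, or reabsorbed into the dissipation-free left-hand side after integration by parts, all while avoiding the appearance of uncontrollable traces at $x=L$. Finding the correct set of correction terms---the ``non-standard multipliers'' advertised in Section \ref{tech}---so that the differential inequality actually closes at the $\mathscr H_s$-level, is the technical core of the argument.
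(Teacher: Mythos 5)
Your overall architecture (spectral Galerkin, baseline energy, time-differentiated energy for $w_{tt}$, then a spatial upgrade, then limit passage, then a decomposition for uniqueness) mirrors the paper's, but two of your key steps would not close as written.

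\textbf{The elliptic-regularity step fails.} You propose to pass from the $E_1$-level bound on $w_{tt}$ to an $L^\infty_t\cD(\cA)$ bound on $w$ by ``going back through the equation,'' i.e., reading $D\partial_x^4 w^N = -w_{tt}^N + p - \mathbf A_{1,0}(w^N,0)$ and applying elliptic regularity. Expand the stiffness nonlinearity (Remark \ref{quasi}): it contains the quasilinear term $D\,w_x^2\,\partial_x^4 w$. On a Galerkin approximant the equation holds only after projection onto $S_N$, so you obtain $D\partial_x^4 w^N = P_N[\,p - w_{tt}^N\,] - D P_N[\,w_{xx}^3 + 4w_xw_{xx}w_{xxx} + w_x^2\partial_x^4 w^N\,]$, and estimating the last bracket by the triangle inequality leaves you with $D\|\partial_x^4 w^N\| \le \cdots + D\|w_x\|_\infty^2\|\partial_x^4 w^N\|$, which cannot be absorbed unless $\|w_x\|_\infty < 1$ (small data). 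The pointwise coercivity $1+w_x^2\ge 1$ is of no use because $P_N$ does not commute with multiplication by $w_x^2$; you simply cannot divide through. (Even on the putative limit solution, the bootstrap is circular, since the right-hand side already involves $w_{xxx}$ and $\partial_x^4 w$.) This is precisely what the paper's Step 5 remark warns against. The paper instead tests with $\partial_x^4 w$ as an \emph{equipartition multiplier} and integrates in space--time: the quasilinear term then becomes the \emph{positive} quantity $\|w_x\partial_x^4 w\|^2$ on the left, so it helps rather than obstructs, and the price paid is only $L^2_t$ (not $L^\infty_t$) control of $\|\partial_x^4 w\|$---which is exactly the regularity Definition \ref{strongsol1} demands.

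\textbf{The uniqueness decomposition is insufficient.} Your identity
\[
(w_{xx}^1)^2 w_x^1 - (w_{xx}^2)^2 w_x^2 = (w_{xx}^1+w_{xx}^2)w_x^1\, z_{xx} + (w_{xx}^2)^2\, z_x
\]
is algebraically correct, but after pairing with $z_{xt}$ (resp.\ $z_{xxt}$ from the $\partial_{xx}$ term) it produces cross terms of the schematic form $(f_1, z_{xx}z_{xt})$ and $(f_2, z_x z_{xxt})$ with \emph{different} bounded coefficients $f_1\ne f_2$. Neither term can be dominated by the $\mathscr H$-energy of $z$: in $\mathscr H = H^2_*\times L^2$ you control $\|z_{xx}\|$ and $\|z_t\|$, but not $\|z_{xt}\|$ or $\|z_{xxt}\|$, and with $k_2=0$ there is no dissipation to help. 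Integrating by parts in $x$ merely trades $\|z_{xt}\|$ for $\|z_{xxx}\|$, which is equally uncontrollable. The paper's extra move---subtracting both $w-v$ and $v-w$ from the equations and \emph{adding} the resulting identities---forces the two cross terms to share a common coefficient $(w_x+v_x)(w_{xx}+v_{xx})$, so they pair as $\bigl(f,\ z_x z_{xxt}+z_{xx}z_{xt}\bigr) = \bigl(f,\ \partial_t(z_xz_{xx})\bigr)$, allowing a temporal integration by parts that moves the extra derivative off $z$ and onto the smooth individual trajectories. That symmetrization is exactly the ``novel decomposition'' advertised in Section \ref{tech}, and without it the Gr\"onwall closure you invoke does not materialize.

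A smaller omission: the time-differentiated estimate requires a uniform bound on $\{\|w^n_{tt}(0)\|\}_n$, which you do not mention; the paper devotes a separate step to it by evaluating the (approximate) equation at $t=0$.
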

\begin{remark}
The time of existence $T^*$ depends on the data in the sense of $$T^*=T^*\left(||(w_0,w_1)||_{\mathscr H_s}, ||p||_{H^2(0,T;L^2(0,L))}\right),$$ namely, the size of the data in the appropriate space, rather than the individual data itself.
\end{remark}

\subsection{Proof Outline}

We will commence with a Galerkin procedure, using the mode functions $\{s_j\}_{j=1}^{\infty}$ described above. This will yield approximate solutions, with the baseline energy identity providing associated weak limit points. Identifying the nonlinear weak limits is non-trivial, hence, two higher-order multipliers will be used to provide more regular a priori bounds; one is an energy estimate corresponding to the time-differentiated version of the equation, and the other is a stability type estimate resulting from the multiplier $\partial_x^4w$.  Additional compactness is obtained through these estimates with appropriately smooth initial data. With a weak solution in hand corresponding to smooth data, we will show that this strong solution satisfies the PDE pointwise, along with all four cantilever boundary conditions. Lastly, we will tackle the uniqueness and continuous dependence in this case through a particular decomposition of the polynomial structure of the nonlinear stiffness. 

	\subsection{Proof of Theorem \ref{withoutiota}} \label{SectionProofOfStiffness}

\subsubsection{Existence} \label{ExistenceSection}
\begin{proof}[\unskip\nopunct]
Consider the positive eigenfunctions of $\mathcal{A}$ described in Section \ref{modefunx}, with $\lambda_n \rightarrow \infty$; these constitute an {\em orthonormal} basis for $L^2(0,L)$ and {\em orthogonal} basis for any $\mathcal{D}(\mathcal{A}^s)$, $s \in \mathbb R$. Now, for each $n=1,2, \ldots$, we denote
\begin{equation}
\label{SnDefinition}
S_n \equiv \text{span}\{s_1, s_2, \ldots, s_n\}.
\end{equation}

\vskip.1cm
\noindent {\bf Step 1 - Approximants:} For fixed smooth data, $w_0 \in \mathcal{D}(\mathcal{A})$ and $w_1 \in H^2_{*}$, we can construct two approximating sequences $\{w_0^n\}_{n=1}^{\infty}$ and $\{w_1^n\}_{n=1}^{\infty}$ such that
\begin{equation}
\label{sequencedef}
w_{0}^n \coloneqq \sum_{j=1}^{n} \left( w_0, s_j \right) s_j ~\in S_n ~~~\text{ and}~~~ w_{1}^n \coloneqq \sum_{j=1}^{n} \left( w_1, s_j \right) s_j ~\in S_n.
\end{equation}
\begin{align}\label{InitialData}
\text{ By construction:} \hskip1cm&~\hskip1cm~w_0^n  \rightarrow w_0~~\text{in } \mathcal{D}(\mathcal{A}),~~  & ~w_1^n \rightarrow w_1 ~~\text{in } H^2_{*}. \hskip4cm
\end{align}
and we can proceed to define smooth finite-dimensional approximations,
\begin{equation*}
w^n(x, t) \coloneqq \sum_{j=1}^{n} q_j(t) s_j(x),
\end{equation*}
where each $q_j(t)$ is a smooth function of time. 

From the weak form, $(\ref{weakform})$, we construct a corresponding matrix system by taking $\phi=s_j$. We define the following spatial four tensor for ease of writing:
\begin{align}\label{Stensor}
\mathcal{S}_{ijkl}=~(\phi_{i,xx}\phi_{j,xx},\phi_{k,x}\phi_{l,x}).\end{align}
Interpreting $q_is_i$ as a sum, we have the separated form of the equations:
\begin{align}\label{sep1}
\left[q''_{i}(s_i,s_j)\right]+Dq_i\left[\kappa_i^4(s_i,s_j)\right]+Dq_i^3\left[\mathcal S_{iiij}+\mathcal S_{jiii}\right]=(p,s_j),
\end{align} where primes represent $\partial_t$. Initialization is given by
\begin{equation*}
q_{j}(0) =  \left( w_0, s_j \right), ~~~q'_{j}(0) =  \left( w_1, s_j \right), ~~~~j=1,2, \ldots ~ .
\end{equation*}

We may then invoke standard ODE existence and uniqueness for this finite dimensional system. Noting the hypotheses on $p_t,p_{tt}$, we obtain a solution $\{q_j\}_{j=1}^n \in C^3(0,t^*)$, for some small $t^*(n)$.

\vskip.1cm
\noindent\label{Level1}{\bf Step 2 - Energy Level 0:} The  estimate below for \eqref{sep1} on the approximant $w^n$ follows immediately using $w_t$ as the multiplier in the equations \eqref{dowellnon*}--\eqref{dowellnon3*}, taken with $\iota=k_2=0$:
\begin{equation*}
E^n_0(t) = E^n_0(0) + \int_0^t  \left(p , w^n_{t} \right) d \tau ~~\text{for all}~~ t>0,
\end{equation*}
where
\begin{equation}
\label{First Energy}
E^n_0(t) = \frac{1}{2} \left[  ||w^n_{t}||^2 + D||w^n_{xx}||^2 + D||w^n_xw^n_{xx}||^2 \right].
\end{equation}

Now, via Young's inequality and Gr\"{o}nwall applied to \eqref{First Energy}, 
and noting that by \eqref{InitialData} that $\left\{ E^n_0(0)\right\}_{n=1}^{\infty}$ is uniformly bounded in terms of the initial data ~$||(w_0,w_1)||^2_{\mathscr H}$, we obtain:
\begin{equation}
\label{FirstLevel}
E^n_{0}(t) \leq f_0 \left( ||p||_{L^2\left(0,t;L^2(0,L)\right)}, ||w_0||_{\mathcal D(\mathcal A)},||w_1||_{H^2_{*}} \right) e^{t/2} ~~\text{for all}~~ t>0.
\end{equation}
The function $f_0$ is increasing in its arguments.
The estimate in \eqref{FirstLevel} ensures that the time of existence for the approximants, $t^*$, is independent of $n$.

\vskip.1cm
\noindent \label{RemarkStiff}{\bf Step 3 - Boundedness of $w^n_{tt}(0)$:} We will consider $E_{1}(t)$ as the natural ``energy" corresponding to the {\em time}-differentiated version of the stiffness-only equation $(\iota=k_2=0)$. For this calculation it is pivotal to establish boundedness of the sequence $\{w^n_{tt}(0)\}_{n=1}^{\infty}$ in $L^2(0,L)$. To that end, it is true that the following holds for all $\phi \in S_n$, $n=1,2, \ldots$:
\begin{equation}
\label{approximate1}
\left( w^n_{tt} + D \partial^4_x w^n-D\partial_x\left[(w^n_{xx})^2w^n_x \right]+D\partial_{xx}\left[w^n_{xx}(w^n_x)^2\right] - p, \phi \right) = 0.
\end{equation}

We consider $\phi = s_j(x),~j=1,2,\ldots, n$. Then, multiplying \eqref{approximate1} by $q''_j(t)$, summing over the $j's$, and rearranging the terms we obtain: 
\begin{equation}
||w^n_{tt}||^2 =(p, w^n_{tt}) -D(\partial_x^4 w^n,w^n_{tt})+ D(\partial_x{([w^n_{xx}]^2 w^n_x)}, w^n_{tt}) - D(\partial_{xx}{([w^n_x]^2 w^n_{xx})}, w^n_{tt}).  \label{weakfort2}
\end{equation}

Owing to the $C^3$ temporal regularity of $w^n$, we can take $t=0$ in the above expression. Therefore, using (i) the expanded version of \text{\bf  [NL ~Stiffness]} shown in {\em Remark} \ref{quasi}, (ii) the Sobolev embedding into $H^1\hookrightarrow L^{\infty}$, (iii) and Poincar\'{e} for various derivatives, we have: 
\begin{align*}
 ||w^n_{tt}(0)|| 
\lesssim &~ ||p(0)|| +  ||w^n_{xxx}(0)||~||\partial_x^4w^n(0)||^2+||w^n_{xx}(0)||~||w^n_{xxx}(0)||^2+\left( 1 + ||w^n_{xx}(0)||^2 \right) ||\partial_x^4 w^n(0)||.
\end{align*}
The expression on the right-hand side is bounded. Indeed, by \eqref{InitialData}, $||\partial_x^4 w^n(0)|| \lesssim ||w_0||_{\mathcal D(\mathcal A)}$. Moreover, by hypothesis, since $p,p_t \in L^2(0,T;L^2(0,L))$, $||p(0)||$ is interpreted as a temporal trace \cite{evans}, with $||p(0)|| \lesssim ||p||_{H^1(0,T;L^2(0,L))}$. Hence we conclude that 
\begin{equation}
\label{boundednessofwtt}
||w^n_{tt}(0)|| \leq f \left( ||p||_{H^1(0,T;L^2(0,L))}, ||w_0||_{\mathcal D(\mathcal A)} \right).
\end{equation}

\vskip.1cm
\noindent\label{Level2}{\bf Step 4 - Energy Level 1:} Our goal now is to form the $E_1(t)$ energy which will correspond to time differentiation of the stiffness dynamics. We note that time differentiation does not affect the boundary conditions for $w^n(x,t)= \sum_{i=1}^n q_i(t)s_i(x)$. Hence, after proceeding with appropriate integration by parts, isolating conserved quantities, and gathering similar terms, we obtain the a priori identity: \begin{align}\label{energy1}
\frac{1}{2} \frac{d}{dt}\big[  ||w_{tt}||^2 +  D||w_{xxt}||^2 &+D ||w_{xx}w_{xt}||^2 +  D ||w_{xxt}w_{x}||^2\big]  \\ \nonumber
=-\dfrac{d}{dt}\Big[4D(w_{x} w_{xx} , & w_{xt}w_{xxt})\Big]+\left( p_{t}, w_{tt} \right) + 3D(w_{xx} w_{xxt} ,w^2_{xt} ) +3 D ( w_{x}w_{xt},w^2_{xxt}) .
\end{align}

We have omitted the superscript $n$ here and in the estimation below for ease of presentation. The identity above is integrated in time on $(0,t)$, with an eye to utilize a version of Gr\"{o}nwall's inequality. 
\begin{remark} As an a priori estimate, the equality above holds for approximate solutions, which are appropriately smooth; this can be seen by operating directly on the ODE system \eqref{sep1}, differentiating in time, multiplying by $q_j''$ and integrating in time. \end{remark}

Accordingly, we  define the energy $E^n_1(t)$ precisely, corresponding to smoother norms for a solution:
\begin{equation}
\label{secondenerg}
E^n_{1}(t) = \frac{1}{2} \left[ ||w^n_{tt}||^2 +  D||w^n_{xxt}||^2 + D||w^n_{xt}w^n_{xx}||^2 + D||w^n_{x}w^n_{xxt}||^2 \right].
\end{equation} 

Now, we must bound/absorb the unsigned quantities in the energy identity \eqref{energy1} above. We first note some important intermediate inequalities. (We have freely used: Young's inequality, Poincar\'{e}, Sobolev interpolation, and the continuous embedding $H^{1/2^+}(0,L) \hookrightarrow L^{\infty}(0,L)$.)
\begin{enumerate}\setlength\itemsep{.5em}
\item 
$
\!
\begin{aligned}[t]
3D \left| (w_{xx} w_{xxt} ,w^2_{xt} ) \right|
    \leq 3D ||w_{xt}||^2_{L^{\infty}}||w_{xx}||~||w_{xxt}|| 
    &\lesssim  ||w_{xx}||^4 + ||w_{xxt}||^4  \\
\end{aligned}
$

\item 
$
\!
\begin{aligned}[t]
3 D \left| ( w_{x}w_{xt},w^2_{xxt}) \right|
    \leq 3D ||w_{x}||_{L^{\infty}}||w_{xt}||_{L^{\infty}} ||w_{xxt}||^2 
    &\lesssim  ||w_{xx}||^4 + ||w_{xxt}||^4  \\
\end{aligned}
$ 

\item 
$
\!
\begin{aligned}[t]
4D \left| (w_{x} w_{xx} ,  w_{xt}w_{xxt}) \right |
    &\leq\varepsilon_1 ||w_{x}w_{xxt}||^2 + C_{\varepsilon_1} ||w_{xx}w_{xt}||^2 \leq\varepsilon_1 ||w_{x}w_{xxt}||^2 + C_{\varepsilon_1} ||w_{xt}||^2_{L^{\infty}}||w_{xx}||^2. \label{five}
\end{aligned}
$ 
\end{enumerate}

To continue our estimation of $\ref{five}$ above, we interpolate the term $||w_{xt}||^2_{L^{\infty}}$ as follows:
$$ ||w_{xt}||^2_{L^{\infty}} \lesssim ||w_{t}||^2_{3/2 + \epsilon}
    \lesssim  ||w_{t}||^{1/2 - \epsilon}~||w_{xxt}||^{3/2 + \epsilon}. $$
Substituting the above in \ref{five} and then utilizing Young's inequality in the $(p,q)$ setting we obtain:
\begin{align} \label{5cont}
 4D \left| (w_{x} w_{xx} ,  w_{xt}w_{xxt}) \right | & \leq  \varepsilon_1 ||w_{x}w_{xxt}||^2 + C_{\varepsilon_1}||w_{t}||^{1/2 - \epsilon}~||w_{xxt}||^{3/2 + \epsilon}~||w_{xx}||^2\\ 
 & \leq  \varepsilon_1 ||w_{x}w_{xxt}||^2 + C_{\varepsilon_1} C_{\varepsilon_{p}}||w_{t}||^{(1/2 - \epsilon)q}~~||w_{xx}||^{2q} + C_{\varepsilon_1} \varepsilon_{p}||w_{xxt}||^{(3/2 + \epsilon)p}. \nonumber
\end{align}
We choose $p>1$ such that $(3/2 + \epsilon)p =2$. Hence, by fixing $\epsilon=1/4$, we obtain $p=8/7$ and $q=8$. Inequality in \eqref{5cont} becomes:
\begin{align*}
4D \left| (w_{x} w_{xx} ,  w_{xt}w_{xxt}) \right | 
&\leq \varepsilon_1 ||w_{x}w_{xxt}||^2 + C_{\varepsilon_1} C_{\varepsilon_{p}}||w_{t}||^{4} + C_{\varepsilon_1} C_{\varepsilon_{p}}||w_{xx}||^{32} + C_{\varepsilon_1} \varepsilon_{p}||w_{xxt}||^2.
\end{align*}

Choosing $\varepsilon_1$ and $\varepsilon_p$ sufficiently small, we can absorb terms by $E_1^n(t)$ on the LHS of \eqref{energy1}. Thus, using \eqref{InitialData} in passing to the limit on the RHS, and invoking the result from \eqref{boundednessofwtt}, we arrive at the estimate:
\begin{equation}
\label{timediff}
E^n_1(t) \leq f_1\left(p_t, ||w_0||_{\mathcal D(\mathcal A)}, ||w_1||_{H^2_{*}} \right) + f_2\left(p,||w_0||_{\mathcal D(\mathcal A)},||w_1||_{H^2_{*}} \right)t + C \int_0^t \left [ E^n_1(\tau) \right ] ^2d\tau.
\end{equation}

Note that $C>0$ above {\em does not depend} on $w_0, w_1$ or $p$. The functions $f_1$ and $f_2$ are smooth, real-valued functions, increasing in their arguments. {In particular, the function $f_2$  is obtained after we apply \eqref{FirstLevel} to the norms $||w_{xx}||^4, ||w_t||^4$ and $||w_{xx}||^{32}$ that appear on the RHS of the estimates (1)--(4).} Dependence on $p$ we take to mean dependence on the norm $\displaystyle ||p||_{L^2(0,t; L^2(0,L))}$ (mutatis mutandis for $p_t $), as in the previous step.  

Hence, using the {\em nonlinear} version of Gr\"{o}nwall's inequality \cite{gronwall}, we obtain a local-in-time estimate:
\begin{equation}
\label{NonlinGronw}
E^n_1(t) \leq  \frac{ f_1  + f_2t }{1- C \left[ f_1 t + f_2 t^2 \right]} \equiv M_1(t),~ ~~0 \leq t < T^* ~~\text{where}~~ T^* = \sup_{t >0} \left \{ C \left[ f_1 t + f_2 t^2 \right] <1 \right \}. 
\end{equation}
\begin{remark}
Following the assumptions of Theorem \ref{withoutiota}, requiring $p \in H^2_{loc}(0,\infty;L^2)$ is done here since the version of Gr\"{o}nwall  we utilize for \eqref{timediff} requires $f_1$ and $f_2$ to be continuous functions in time. 
\end{remark}

Then, for any fixed $T<T^*$, we have that \eqref{NonlinGronw} constitutes a uniform-in-$n$ a priori bound on $ E_1^n(t)<M_1^*(T),~~t \in [0,T]$, where \begin{equation}M_1^*(T) = \max_{t \in [0,T]} M_1(t);\end{equation} this quantity depends only on fixed norms of the data and $T$. 
\begin{remark}\label{stupiddependencies} It is also important to note that, for a fixed $t$, $M_1(t)$ is an {\em increasing} function in ~$||(w_0,w_1)||_{\mathscr H_s}$ that vanishes when $p=w_0=w_1=0$; this is used for continuous dependence.
\end{remark}

From \eqref{NonlinGronw}, we conclude that the Galerkin approximations satisfy a local-in-time bound by the data on any interval $[0,T]$ with $T<T^*$\footnote{Conversely, given $T>0$, there is a ball of data small in the sense of $\sum E_i(0)$ for which solutions exist up to $T$.}). 

Whenever the initial data $(w_0,w_1) \in \mathscr H$, as well as $p$, are fixed, then $T^*$ is fixed; {\em hence, for the existence portion of the proof of Theorem \ref{withoutiota}, we take $T<T^*$ fixed and consider $t \in [0,T]$.}
\vskip.1cm
\noindent {\bf Step 5 - Additional Spatial Regularity:} Unlike the standard approach, we cannot obtain the needed additional boundedness of $\partial_x^4w$ by going back through the equation (with additional regularity of $w_{tt}$ established). 
To obtain further regularity of solutions, spatial differentiation is used. 
\begin{remark} 
Owing to the high order boundary conditions, one must take care in this process. 
We note energy identities associated with one spatial differentiation result in problematic trace terms that cannot be controlled by the conservative energetic terms. Moreover, as spatial differentiation produces mixed time-space terms, we do not proceed to obtain an energy estimate in this scenario; rather, we utilize an equipartition  multiplier and integrate in space-time, which will provide control of the term
$$||\partial_x^4 w||_{L^2(0,t;L^2(0,L))}^2-||w_{xxt}||^2_{L^2(0,t;L^2(0,L))}$$ the latter term is controlled by the estimate in the previous step. \end{remark} 

To obtain the a priori bound, we multiply the equation by $\partial_x^4w$ and estimate. 
\begin{equation}
\label{spacediff}
\left( w_{tt}, \partial_x^4w \right) + D \left( \partial^4_{x}w, \partial_x^4w \right) - D \left( \partial_{x}[w_{xx}^2 w_x], \partial_x^4w \right) + D \left( \partial^2_{x}[w_{xx}w_x^2], \partial_x^4w \right)= \left( p, \partial_x^4w \right).
\end{equation}
Note that as in Step 3, this can be justified by multiplying the weak ODE form \eqref{sep1} by $\lambda_jq_j$ (see \cite{old}).
 We integrate the above in time on $(0,t)$. 
 For the first term of \eqref{spacediff} we integrate by parts:
\begin{equation*}
\int_0^t \left( w_{tt}, \partial_x^4w \right)= \int_0^t \left( w_{xxtt}, w_{xx} \right) =(w_{xxt},w_{xx})\big|_0^t - \int_0^t ||w_{{xxt}}||^2.
\end{equation*}

For the remaining of the terms in \eqref{spacediff} we identify positive quantities and gather terms. 
\begin{align}
\label{spacetwice}
D \int_0^t &\left[  ||\partial_x^4 w||^2 +  ||w_x \partial_x^4 w||^2 \right] d\tau - \int_0^t  ||w_{{xxt}}||^2d\tau \\
=&~  \int_0^t \left[ \left( p, \partial_x^4 w \right)   - D \left( w^3_{xx}, \partial_x^4 w \right)  - 4D \left( w_{x}w_{xx}w_{xxx}, \partial_x^4 w \right) \right] d \tau   -(w_{xxt},w_{xx})\Big|_0^t .\nonumber
\end{align}
We now bound the expressions that appear on the RHS above.

\begin{enumerate}\setlength\itemsep{.5em}
\item 
$
\!
\begin{aligned}[t]
\left | (w_{xxt},w_{xx})\big|_0^t \right| \le&~ ||w_{xxt}(t)||~||w_{xx}(t)||+||w_{xxt}(0)||~||w_{xx}(0)|| \\
\end{aligned}
$ 

\item $\ds \left| \left( p, \partial_x^4 w \right) \right| \leq~ C_{\varepsilon} ||p||^2 + \varepsilon||\partial_x^4 w||^2$

\item 
$
\!
\begin{aligned}[t]
  D \left|  (w_{xx}^3,\partial_x^4 w) \right| \le &~ \delta||\partial_x^4 w||^2+C_{\delta}||w_{xx}||^6_{L^6}\\[.2cm]
    \le &~ \delta||\partial_x^4 w||^2+C_{\delta}||w_{xx}||_{L^{\infty}}^4||w_{xx}||^2 \le  ~\delta||\partial_x^4 w||^2+C_{\delta}||w_{xx}||_{1/2+\epsilon}^4||w_{xx}||^2\\[.2cm]
      \le &~ \delta||\partial_x^4 w||^2+C_{\delta}\big[||w||_4^{1+2\epsilon}||w_{xx}||^{3-2\epsilon}\big]||w_{xx}||^2 ~~\text{(take } \epsilon=1/4) \\[.2cm]
      \le &~ \delta||\partial_x^4 w||^2+C_{\delta} \delta_p ||w||_4^{(3/2)p} + C_{\delta_1, \delta_p}||w||_2^{(9/2)q} ~~\text{(take } p=4/3)\\[.2cm]
      \le &~ \left( \delta  +C_{\delta} \delta_p \right)||\partial_x^4 w||^2+C_{\delta, \delta_p}||w_{xx}||^{18}  
\end{aligned}
$

\item 
$
\!
\begin{aligned}[t]
  4D \left|  (w_xw_{xx}w_{xxx},\partial_x^4 w) \right| \le & ~ \eta||\partial_x^4 w||^2+C_{\eta}||w_x||^2_{L^{\infty}}||w_{xx}||_{L^{\infty}}^2||w_{xxx}||^2\\[.2cm]
   \le &~ \eta||\partial_x^4 w||^2+C_{\eta}||w_{xx}||^2 \left[||w_{xx}||^{3/2 - \epsilon}||w||_4^{1/2 + \epsilon}\right]\left [  ||w_{xx}||~||w||_4 \right ] ~~\text{(take } \epsilon=1/4)\\[.2cm]
   \le &~\eta ||\partial_x^4 w||^2+C_{\eta} \eta_p ||w||_4^{(7/4)p} + C_{\eta, \eta_p}||w_{xx}||^{(17/4)q} ~~\text{(take } p=8/7)\\[.2cm]
   \le &~ \left( \eta +C_{\eta} \eta_p \right) ||\partial_x^4 w||^2 + C_{\eta, \eta_p}||w_{xx}||^{34}.
\end{aligned}
$
\end{enumerate}

We choose $\varepsilon, \delta, \delta_p, \eta, \eta_p$ so that, upon integration, ~$\int_0^t||\partial_x^4 w||^2d\tau$ is absorbed by the LHS of \eqref{spacetwice}. Hence, by denoting 
\begin{equation*}
V(t) = D \left| \left| \partial_x^4 w \right|\right|^2 + D\big|\big|w_{x}\partial_x^4 w\big|\big|^2 ,
\end{equation*}
and $V^n(t)$ the above functional evaluated on $w^n$, we estimate \eqref{spacetwice} as:
\begin{equation}
\label{spacedifff}
\int_0^t V^n(\tau) d\tau \leq f_3 \left(t,p,E^n_0(0), E^n_1(0),E^n_0(t),  E^n_1(t) \right) ~~~ \text{for all } t \in [0, T],
\end{equation}
where we have invoked the estimates from the previous level \eqref{FirstLevel} and \eqref{NonlinGronw}, and $T<T^*$. Again, $f_3$ is increasing in its arguments, and dependence on $p$ is taken as in the previous sections. 
\begin{remark} Note that \eqref{spacedifff} {\em is not a true energy estimate} in the sense of pointwise-in-time control of an ``energy". The estimate above highlights the need to first close the higher time estimate for solutions in order to use the equipartition approach. \end{remark}

Based on  the boundedness of of $E^n_0(0)$ and $E^n_1(0)$, along with the combination of \eqref{FirstLevel},\eqref{NonlinGronw},  \eqref{spacedifff}, we deduce that 
\begin{equation}
\label{spacedifff1}
\int_0^t V^n(\tau) d\tau \leq f_3 \left(t,p, p_t, ||w_0||_{\mathcal D(\mathcal A)}, ||w_1||_{H^2_{*}}, M_1^*(T) \right) ~~~~\text{for all } t \in [0, T].
\end{equation}

Combining \eqref{NonlinGronw} and \eqref{spacedifff1}, we arrive at the final energy estimate for boundedness of 
\begin{equation}
\label{finalreg}
||w^n||_{L^2(0,T;\mathcal D(\mathcal A))}+||w^n_t||_{L^{\infty}(0,T;\cD(\cA^{1/2}))}+||w^n_{tt}||_{L^{\infty}(0,T;L^2(0,L))} \le C(\text{data},T),
\end{equation}
where $T<T^*$ as in \eqref{NonlinGronw} and the dependence on ``data" is as in the RHS of \eqref{spacedifff1}.
 This bound holds for the associated subsequential weak limit points and provides additional compactness below.
\begin{remark}\label{neededreg}
Denoting $w$ as the function corresponding to the weak/weak-* limit above, we see that $w \in L^2(0,T;H^4(0,L))$ and $w_t \in L^{2}(0,T;H^2_*)$; hence we obtain in the standard way  \cite{evans} the auxiliary bound  for $w \in C([0,T];H^3(0,L))$. \end{remark}

\vskip.1cm
\noindent {\bf Step 6 - Limit Passage and Weak Solution:} With higher a priori bounds in hand for smooth data $w_0 \in \mathcal D(\mathcal A),~w_1 \in H^2_*$, we proceed to pass with the limit and construct a weak solution satisfying \eqref{weakform} with $k_2 = \iota = 0$ and $ \sigma=1$ on any $[0,T]$ for $T<T^*(w_0,w_1,p)$.

From \eqref{finalreg}, Banach-Aloaglu yields existence of a subsequence $\left\{w^{n_{k}}\right\}_{k=1}^{\infty}$ and associated weak limit point \begin{equation}\label{regpoints2} w \in L^2 \left(0, T; H^4(0,L) \right) \cap  H^1 \left(0, T; H^2_{*} \right)\cap H^2 \left(0, T; L^2(0,L) \right), ~\text{ such that} \end{equation} 
\begin{equation}\label{regpoints}w^{n_k} \rightharpoonup w\in L^2 \left(0, T; \mathcal D(\mathcal A)) \right);~~~~
w^{n_k}_{t} \rightharpoonup w_{t} \in L^2 \left(0, T; H^2_{*} \right);~~~~
w^{n_k}_{tt} \rightharpoonup w_{tt} \in L^2 \left(0, T; L^2(0,T) \right),\end{equation}
with compactness of the Sobolev embeddings and Aubin-Lions ensuring strong convergence for $w_{n_k}$ in $L^2(0,T;H^2_*)$.

Now, based on Definition \ref{sol1}, in order to identify $w$ as a {\em weak solution}, it must satisfy the weak formulation \eqref{weakform} with $k_2 = \iota = 0$ and $ \sigma=1$. Identification for linear terms in \eqref{weakform} immediately follows from the above weak convergence, whereas the two [{\bf NL Stiffness}] terms require more attention. For $\phi \in H^2_{*}$, adding and subtracting mixed terms, we obtain (omitting temporal integration):
\begin{align*}
\label{PassLimit1}
\left( [w_{x}^{n_k}]^2 w_{xx}^{n_k} - w^2_{x}w_{xx}, \phi_{xx} \right) & \leq  \left( [w_{x}^{n_k}]^2 (w_{xx}^{n_k} - w_{xx} ) , \phi_{xx} \right) + \left( w_{xx} ([w_{x}^{n_k}]^2 - w^2_{x} ) , \phi_{xx} \right) \\
& \leq  ||w_{x}^{n_k}||^2_{L^{\infty}} \left(w_{xx}^{n_k} - w_{xx} , \phi_{xx} \right) + || \phi_{xx} ||~|| w_{xx}||~||[w_{x}^{n_k}]^2 - w^2_{x}||_{L^{\infty}}  \\
& \leq  ||w_{xx}^{n_k}||^2 \left(w_{xx}^{n_k} - w_{xx} , \phi_{xx} \right) + || \phi_{xx} || ~|| w_{xx}|| ~||w_{x}^{n_k} + w_{x}||_{L^{\infty}}||w_{x}^{n_k} - w_{x}||_{L^{\infty}} \\
& \leq  ||w_{xx}||^2 \left(w_{xx}^{n_k} - w_{xx} , \phi_{xx} \right) + 2~|| \phi_{xx} || ~|| w_{xx}||^3  ||w_{x}^{n_k} - w_{x}||_{1/2 + \epsilon}  \\
& ~~~~\to 0 ~~\text{as}~~ k \to \infty.
\end{align*} 
The above calculation requires no additional regularity of solutions, and follows from bounds at the baseline energy level $E_0$, i.e., $w,w^n \in L^{\infty}(0,T;H^2_*) \cap W^{1,\infty}(0,T;L^2(0,L))$. Below, we isolate the problematic nonlinear difference, and critically use additional regularity gained in the preceding steps.
\begin{align*}
\left( [w_{xx}^{n_k}]^2 w_{x}^{n_k} - w^2_{xx}w_{x}, \phi_{x} \right) & \leq  \left( [w_{xx}^{n_k}]^2 (w_{x}^{n_k} - w_{x} ) , \phi_{x} \right) + \left( w_{x} ([w_{xx}^{n_k}]^2 - w^2_{xx} ) , \phi_{x} \right) \\
& \leq   ||\phi_x||_{L^{\infty}} ||w_{xx}^{n_k}||^2||w_{x}^{n_k} - w_{x} ||_{L^{\infty}} + || \phi_x ||_{L^{\infty}} ||w_{x}||_{L^{\infty}}||[w_{xx}^{n_k}]^2 - w^2_{xx}||  \\
&\leq  || \phi_{xx} ||~||w_{xx}^{n_k}||^2 ||w_{xx}^{n_k} - w_{xx} ||+ || \phi_{xx} ||~||w_{xx}||~||w_{xx}^{n_k} + w_{xx}||_{L^{\infty}}||w_{xx}^{n_k}- w_{xx}||   \\
&\leq  || \phi_{xx} ||~||w_{xx}||^2 ||w_{xx}^{n_k} - w_{xx} || +  2~|| \phi_{xx} ||~||w_{xx}||~||w_{xxx}||~||w_{xx}^{n_k}- w_{xx}||  \\
& ~~~~\to 0 ~~\text{as}~~k \to \infty.
\end{align*}
We emphasize the need for strong convergence for $w_{xx}^{n_k}$ in $L^2(0,T;L^2(0,T))$ obtained through compactness of the higher estimates. 

\begin{remark} Algebraic manipulations of the difference ~{\small $[w_{xx}^{n_k}]^2 w_{x}^{n_k} - w^2_{xx}w_{x}$}~ reveal a clear compactness gap for limit passage at the level of only {\small $||w_{xx}^{n_k}||$} boundedness. An alternative approach for the identification of limit points for ${\small \{[w_{xx}^{n_k}]^2w_x^{n_k}\}_{k=1}^{\infty}}$ (which uniformly bounded in $L^1$), would be to utilize the Dunford-Pettis weak compactness criterion in $L^1$. However, associated multiplier estimates bring about  non-trivial commutators corresponding to the quasilinear nature of {\bf [NL Stiffness]}.\end{remark}

We conclude that the limit point $w$, as above, satisfies the weak formulation \eqref{weakform} with $k_2 = \iota = 0$ and $ \sigma=1$, and is thusly a {\em weak solution}.

\vskip.1cm
\noindent {\bf Step 7 - Strong Solution and Free Boundary Condition:} With a weak solution $w(x,t)$ in hand corresponding to smooth initial data, we have immediately that the solution is strong, by Definition \ref{strongsol1} and the regularity afforded by \eqref{regpoints}. This concludes the proof of Theorem \ref{withoutiota}.
\end{proof}
Naturally, we would like to show that the strong solution constructed above satisfies the PDE pointwisedly, as well as the higher order boundary conditions.
\begin{corollary}\label{strongisstrong} Strong solutions $w(x,t)$ as described in Definition \ref{strongsol1}, satisfy equation \eqref{dowellnon*} with $\sigma=1$ and $\iota=k_2=0$ almost everywhere in space and in time. Additionally, they satisfy the free boundary conditions: $w_{xx}(L,t)=w_{xxx}(L,t)=0$ for all $0 \leq t \leq T$.

\end{corollary}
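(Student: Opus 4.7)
\medskip
\noindent\emph{Proof plan for Corollary \ref{strongisstrong}.} The strategy is straightforward: the extra regularity of a strong solution makes every term in the strong form of \eqref{dowellnon*} (with $\sigma=1$, $\iota=k_2=0$) well-defined in $L^2(0,T;L^2(0,L))$, so both the pointwise PDE identity and the natural boundary conditions should be extracted from the weak formulation \eqref{weakform} by a careful integration by parts, plus a short continuity argument to upgrade ``a.e.\ $t$'' to ``all $t$''.

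\medskip
\noindent\textbf{Recovering the pointwise PDE.} First I would verify that under the strong-solution regularity each spatial term of \eqref{weakform} can be integrated by parts to move all spatial derivatives onto $w$. Since $w\in L^2(0,T;H^4(0,L))$ and $H^2(0,L)$ is a Banach algebra in one dimension, the products $w_{xx}w_x^2$ and $w_{xx}^2 w_x$ lie in $L^2(0,T;H^2(0,L))$ and $L^2(0,T;H^1(0,L))$ respectively, which is exactly the regularity needed to legitimately integrate by parts twice (for the first) and once (for the second). Now test against $\phi \in C_c^\infty(0,L)\subset H^2_*$; since both $\phi$ and $\phi_x$ vanish near $x=0$ and $x=L$, every boundary contribution at $x=L$ drops, and \eqref{weakform} collapses to
\[
\big(w_{tt}+D\partial_x^4 w - D\partial_x[w_{xx}^2 w_x]+D\partial_{xx}[w_{xx}w_x^2]-p,\,\phi\big)=0
\]
for a.e.\ $t\in[0,T]$ and every such $\phi$. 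Density of $C_c^\infty(0,L)$ in $L^2(0,L)$ then gives \eqref{dowellnon*} pointwise a.e.\ in $(0,L)\times(0,T)$.

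\medskip
\noindent\textbf{Free boundary conditions at $x=L$.} Definition \ref{strongsol1} already places $w\in L^2(0,T;\mathcal D(\mathcal A))$, so by the definition of $\mathcal D(\mathcal A)$ in \eqref{def:cA0}, the traces $w_{xx}(L,t)=w_{xxx}(L,t)=0$ hold for a.e.\ $t$. To upgrade ``a.e.''\ to ``all $t\in[0,T]$'' I would invoke the auxiliary continuity $w\in C([0,T];H^3(0,L))$ recorded in Remark \ref{neededreg}. The Sobolev embedding $H^3(0,L)\hookrightarrow C^2([0,L])$ then makes $t\mapsto w_{xx}(L,t)$ and $t\mapsto w_{xxx}(L,t)$ continuous real-valued functions, and a continuous function vanishing on a set of full measure vanishes identically.

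\medskip
\noindent\textbf{What I would worry about.} The only nontrivial bookkeeping is the integration-by-parts tally for the two quasilinear stiffness terms, which I would want to double-check against the natural boundary identities derived earlier in \eqref{firstnatural}; any sign or coefficient mismatch there would point to an error in the strong form. Were one to start instead from weaker hypotheses (say $w\in L^2(0,T;H^4(0,L))$ without the clamped traces at $x=L$ built in), the BCs would have to be peeled off from the residual boundary terms by letting $\phi_x(L)$ and $\phi(L)$ vary independently in $H^2_*$; this recovers $(1+w_x^2(L))w_{xx}(L)=0$ first and then $(1+w_x^2(L))w_{xxx}(L)+w_x(L)w_{xx}^2(L)=0$, at which point positivity of $1+w_x^2(L)$ delivers the two free BCs. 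Under the present definition, however, this alternative routine is unnecessary and the argument above suffices.
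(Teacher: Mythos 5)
Your derivation of the pointwise PDE is correct and matches the paper: test against $\phi\in C_c^\infty(0,L)$, integrate by parts under the $L^2(0,T;\mathcal D(\mathcal A))$ regularity, and use density in $L^2(0,L)$.

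For the boundary conditions, your route (read the traces directly off the membership $w(\cdot,t)\in\mathcal D(\mathcal A)$ a.e.\ $t$, then upgrade by continuity) is genuinely different from the paper's, and it works cleanly for $w_{xx}(L,t)$: from $w\in C([0,T];H^3(0,L))$ and the 1D embedding $H^3\hookrightarrow C^2$, the map $t\mapsto w_{xx}(L,t)$ is continuous and vanishes a.e., hence everywhere. However, for $w_{xxx}(L,t)$ the same step fails: $H^3(0,L)\hookrightarrow C^2([0,L])$ gives continuity of $w$, $w_x$, $w_{xx}$ only; it says nothing about the third derivative, since $w_{xxx}(\cdot,t)\in L^2(0,L)$ is all that $H^3$ provides, and $w_{xxx}(L,t)$ is not even a well-defined pointwise trace at times where $w(\cdot,t)\notin H^4$. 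So you cannot conclude continuity of $t\mapsto w_{xxx}(L,t)$ from $C([0,T];H^3)$, and the a.e.-to-everywhere upgrade for that trace is unjustified as written.

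This is precisely where the paper's ``alternative routine'' that you set aside becomes essential rather than optional. The paper undoes the integration by parts in the weak form against general $\phi\in H^2_*$ and, using the already-established interior identity, isolates the boundary relation \eqref{boundary}. Solving it for the $w_{xxx}(L)$ term yields
\[
\phi(L)\bigl(1+w_x^2(L)\bigr)w_{xxx}(L)=\phi_x(L)\bigl(1+w_x^2(L)\bigr)w_{xx}(L)-\phi(L)\,w_x(L)\,w_{xx}^2(L),
\]
whose right-hand side involves only traces of order $\le 2$ and is therefore a continuous function of $t$ by $w\in C([0,T];H^3)$. Since $1+w_x^2(L)>0$ is continuous, this recovers a continuous representative of $t\mapsto w_{xxx}(L,t)$; testing first with $\phi\in H_0^1\cap H^2_*$ and then with general $\phi\in H^2_*$ (using surjectivity of the trace map) then gives $w_{xx}(L,t)=0$ and $w_{xxx}(L,t)=0$ for all $t\in[0,T]$. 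You should replace your continuity argument for the third-order trace with this step; the rest of your proposal is sound.
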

\begin{proof}[Proof of Corollary \ref{strongisstrong}]
The weak limit $w$ constructed above satisfies:
\begin{equation}
\label{weakform3}
(w_{tt}, \phi)+D(w_{xx},\phi_{xx}) + D(w^2_{xx}w_x, \phi_x) + D(w_x^2 w_{xx}, \phi_{xx})= (p, \phi), ~~~\forall \phi \in H^2_{*},~~a.e.~t.
\end{equation}

Having in hand the regularity given in \eqref{regpoints2}, we undo integration by parts in \eqref{weakform3} evaluated on test functions  to obtain the strong form of the PDE. That is:
\begin{equation*}
\left( w_{tt} + D \partial^4_{x}w -D\partial_x\big[w_{xx}^2w_x \big]+D\partial_{xx}\big[w_{xx}w_x^2\big] - p, \phi \right)=0, ~~~\forall \phi \in C_0^{\infty}(0,L).
 \end{equation*} 
Via density, we have:
\begin{equation}\label{dumdum}
w_{tt} + D \partial^4_{x}w -D\partial_x\big[w_{xx}^2w_x \big]+D\partial_{xx}\big[w_{xx}w_x^2\big] = p ~~~a.e.~x,~~ a.e. ~t,
 \end{equation}
 and thus the PDE in \eqref{dowellnon*} is satisfied $a.e.~x$ pointwisedly for $a.e.~t$. 
 
Since $w \in H^2_*$ by construction, we must verify the free boundary conditions.
Undoing the integration by parts procedure and invoking \eqref{dumdum} results the following boundary terms: 
\begin{equation}
\label{boundary}
\phi_x(L) \left( w_{xx}(L)+w_x^2(L)w_{xx}(L) \right) - \phi(L) \left( w_{xxx}(L) + w_{x}(L) w_{xx}^2(L) +w_x^2(L)w_{xxx}(L) \right)=0, 
\end{equation}
for all  ~$\phi \in H^2_*$, holding $a.e.$ in $t$.
But, as in Remark \ref{neededreg}, $w \in C([0,T]; H^3(0,L))$, and so we can write:
$$\phi(L)(1+w_x^2(L))w_{xxx}(L) = \phi_x(L) \left( w_{xx}(L)+w_x^2(L)w_{xx}(L) \right)-\phi(L)w_{x}(L)w_{xx}^2(L),$$
where the RHS is continuous function of time.
 Now, consider the subclass of $\phi \in H_0^1\cap H_*^2 \subseteq H^2_*$. Then, $$ w_{xx}(L)\left(1 + w_x^2(L) \right)\phi_x(L)=0 ~~\text{for all such}~\phi.$$
 By the surjectivity of the trace theorem, there exists one function so that $\phi_x(L) \neq 0$, and thus $$w_{xx}(L)\left(1 + w_x^2(L)\right)=0 \implies w_{xx}(L)=0.$$
 
  Now, consider $\phi \in H^2_*$. Again, by the surjectivity of the trace theorem, there exists at least one $\phi$ so that $\phi(L)\neq 0$. Using this $\phi$ and the fact that $w_{xx}(L)=0$, \eqref{boundary} yields: $$w_{xxx}(L)\left(1 + w_x^2(L)\right)=0 \implies w_{xxx}(L)=0.$$
  
  Thus, we have verified that the free boundary terms $w_{xx}(L) = w_{xxx}(L)=0$ are satisfied. 
\end{proof}
\begin{remark} It is particularly important that strong solutions remain in $\mathscr H_s \equiv \mathcal D(\mathcal A) \times \cD(\cA^{1/2})$ for data $(w_0,w_1)$ emanating therefrom---namely, exhibiting regularity and satisfying all four boundary conditions. This, for instance, allows us to use Poincar\'e repeatedly on the solution, so, for a strong solution $w$, we have the norm equivalences: ~{\small $||\partial_x^4 w|| ~\sim~||w||_{H^4(0,L)} ~\sim~||w||_{\mathcal D(\mathcal A)}.$}
\end{remark}

\subsubsection{Uniqueness and Continuous Dependence} \label{SectionUniq}
Now, consider two strong solutions, $w$ and $v$ whose difference $z=w-v$ satisfies:
\begin{align}
z_{tt}+ D \partial_{x}^{4}z -D\partial_x\left(w_{xx}^2w_x -v_{xx}^2v_x \right)+D\partial_{x}^2\left(w_{xx}w_x^2-v_{xx}v_x^2 \right) = &~ 0 \label{eq:16},
\end{align}
as well as the strong form of the boundary conditions at $x=0$ and $x=L$ and associated initial conditions $z_0=w_0-v_0$ and $z_1=w_1-v_1$. We consider the dynamics above on $t \in [0,T]$, where $T<T^*=\min\{T^*(w_0,w_1),~T^*(v_0,v_1)\}.$
We multiply $\eqref{eq:16}$ by $z_t$ and integrate over $x\in(0,L)$.

For linear terms we have standard conserved quantities,~
$\ds  ||z_t||^2;~~ D||z_{xx}||^2.$
We now take a closer look at the nonlinear differences. Note that the regularity of strong solutions in Definition \ref{strongsol1} is sufficient---specifically $w_t \in L^2(0,T;H^2_*)$---to permit the calculations below.
\begin{enumerate}\setlength\itemsep{1em}
\item 
\label{Unique1}
$
\!
\begin{aligned}[t]
     \left ( \partial_{x}^2\left[w_{xx} w_x^2\right], z_t \right ) -  \left( \partial_{x}^2\left[v_{xx} v_x^2\right] , z_t \right)
= &~   \left ( w_{xx}w_x^2 - w_x^2 v_{xx}, z_{xxt} \right ) +  \left(w_x^2 v_{xx} - v_{xx}v_x^2 , z_{xxt} \right),
\end{aligned}
$

Examining each of the resulting terms above yields:
\begin{enumerate}[label=(\roman*)]\setlength\itemsep{.5em}
\item 

$
\!
\begin{aligned}[t]
 \left ( w_{xx}w_x^2 - w_x^2 v_{xx}, z_{xxt} \right ) = &~  \left (w_x^2 , z_{xx} z_{xxt} \right )
= ~ \frac{1}{2} \frac{d}{dt} ||w_{x}z_{xx}||^2 -  \left( w_x w_{xt}, z^2_{xx} \right) 
\end{aligned} 
$ 
\item 
$
\!
\begin{aligned}[t]
\left(w_x^2 v_{xx} - v_{xx}v_x^2 , z_{xxt} \right) =&~ \left(v_{xx} \left [ w_x^2  - v_x^2 \right ], z_{xxt} \right) = \left(v_{xx} \left [ w_x  + v_x \right ],z_{x} z_{xxt} \right).
\end{aligned} 
$ 
\end{enumerate}

\item 
\label{Unique2}
$
\!
\begin{aligned}[t]
    - \left( \partial_x\left[w_{xx}^2w_x \right], z_t \right) +  \left( \partial_x\left[v_{xx}^2v_x \right], z_t \right) 
= &  \left( w_{xx}^2w_x - w_{xx}^2v_{x}  , z_{xt} \right) +  \left( w_{xx}^2v_{x} -  v_{xx}^2v_x , z_{xt} \right)
\end{aligned}
$

Like before, we examine each term separately:
\begin{enumerate}[label=(\roman*)]\setlength\itemsep{.5em}
\item 

$
\!
\begin{aligned}[t]
 \left( w_{xx}^2w_x - w^2_{xx} v_{x} , z_{xt} \right)&  =  \left( w^2_{xx} , z_{x} z_{xt} \right) = \frac{1}{2} \frac{d}{dt} ||w_{xx}z_{x}||^2 - \left( w_{xx}w_{xxt}, z^2_{x} \right) 
\end{aligned} 
$ 
\item 
$
\!
\begin{aligned}[t]
\left( w^2_{xx} v_{x} - v_{xx}^2v_x , z_{xt} \right)& =   \left(v_x \left [ w^2_{xx}  - v_{xx}^2 \right ], z_{xt} \right) =  \left(v_x \left [ w_{xx}  + v_{xx} \right ] , z_{xx}z_{xt} \right).
\end{aligned} 
$ 
\end{enumerate}
\end{enumerate}

Combining the linear terms along with \ref{Unique1} and \ref{Unique2} we obtain:
\begin{align}
\label{wversion}
\frac{1}{2} \frac{d}{dt}& \Big [ ||z_t||^2 + D ||z_{xx}||^2 + D ||w_{x}z_{xx}||^2 +D||w_{xx}z_{x}||^2 \Big ] \\ & =  D \left( w_x w_{xt}, z^2_{xx} \right) -  D\left(v_{xx} \left [ w_x  + v_x \right ],z_{x} z_{xxt} \right) + D\left( w_{xx}w_{xxt}, z^2_{x} \right)-D \left(v_x \left [ w_{xx}  + v_{xx} \right ] , z_{xx}z_{xt} \right).\nonumber
\end{align}

The expression above cannot be directly estimated, but we exploit symmetry in the polynomial nature of the nonlinearity by swapping the roles of $w$ and $v$ in the previous calculation (equivalent to subtracting $v$ from $w$), adding the two identities, yielding the (now) symmetric identity: 

\begin{align}
\frac{d}{dt} & \Big [  ||z_{t}||^2 + D||z_{xx}||^2\Big] +\frac{D}{2}\dfrac{d}{dt}\Big[ ||w_{x}z_{xx}||^2 +  ||v_x z_{xx}||^2 +  ||w_{xx}z_{x}||^2 +  ||v_{xx}z_{x}||^2 \Big ] \label{finaluniq} \\ \nonumber
=&~ D \Big( w_x w_{xt}+ v_x v_{xt}, z^2_{xx} \Big) 
+ D\Big( w_{xx}w_{xxt} + v_{xx}v_{xxt}, z^2_{x} \Big) -D\Big(\left [ w_{xx}  + v_{xx} \right ] \left [ w_x  + v_x \right ],z_{x} z_{xxt} + z_{xx}z_{xt} \Big) .
\end{align}

Now, the third term in the above line sees $z$-regularity higher than that appearing in the ``energetic" (i.e., positive, conservative) portion of the identity. The key step is to rewrite this term, moving time derivatives onto individual trajectories treated as coefficients---so as to exploit bounds in higher norms for individual trajectories, as well as the particular quadratic factorization appearing here. 
\begin{align*} D\Big(\left [ w_{xx}  + v_{xx} \right ] \left [ w_x  + v_x \right ],z_{x} z_{xxt} + z_{xx}z_{xt} \Big) =&~ D \frac{d}{dt} \Big( \left( w_{xx}  + v_{xx} \right) \left( w_x  + v_x \right), z_{x} z_{xx}  \Big) \\[.2cm] & - D\Big( \partial_t \left [( w_{xx}  + v_{xx} ) ( w_x  + v_x \right ) ],z_{x} z_{xx} \Big). \end{align*}
\noindent Denote:
\begin{align*}
E(t) = &~ ||z_{t}||^2 + D||z_{xx}||^2+ \frac{D}{2}\Big[ ||w_{x}z_{xx}||^2 +  ||v_x z_{xx}||^2 +  ||w_{xx}z_{x}||^2 +   ||v_{xx}z_{x}||^2 \Big].
\end{align*}
Then, \eqref{finaluniq} becomes upon temporal integration:
\begin{align*}
E(t)=&~E(0)-D\left( \left( w_{xx}  + v_{xx} \right)\left( w_x  + v_x \right), z_{x} z_{xx}  \right)\Big|_0^t \\ &+ \int_0^t\Big[D \left( w_x w_{xt}+ v_x v_{xt}, z^2_{xx} \right) + D\left( w_{xx}w_{xxt} + v_{xx}v_{xxt}, z^2_{x} \right) \\
&+ D\Big(  [ w_{xxt}  + v_{xxt} ] [ w_x  + v_x] ,z_{x} z_{xx} \Big) + D\Big(  [ w_{xx}  + v_{xx} ] [ w_{xt}  + v_{xt} ] ,z_{x} z_{xx} \Big) \Big]d\tau. 
\end{align*}

The RHS terms are estimated in the following way, using the Sobolev embeddings and Poincar\'{e}, with an eye to use Gr\"onwall:
\begin{enumerate}
\item 
\label{Uniqueness1}
$
\!
\begin{aligned}[t]
D \left| \left( w_x w_{xt}+ v_x v_{xt}, z^2_{xx} \right) \right| \leq &~   ||w_x w_{xt}+ v_x v_{xt}||_{L^{\infty}} ||z_{xx}||^2 \\[.2cm]
\lesssim &~  \Big ( ||w_{xx}|| \hspace{1mm} ||w_{xxt}|| + ||v_{xx}|| \hspace{1mm} ||v_{xxt}|| \Big) ||z_{xx}||^2
\end{aligned}
$

\item 
$
\!
\begin{aligned}[t]
 D \left|  \left( w_{xx}w_{xxt} + v_{xx}v_{xxt}, z^2_{x} \right) \right| \leq & ~   ||z_x||^2_{L^{\infty}} \Big( ||w_{xx}|| \hspace{1mm} ||w_{xxt}|| + ||v_{xx}|| \hspace{1mm} ||v_{xxt}|| \Big ) \\[.2cm]
\lesssim &~ \Big ( ||w_{xx}|| \hspace{1mm} ||w_{xxt}|| + ||v_{xx}|| \hspace{1mm} ||v_{xxt}|| \Big) ||z_{xx}||^2
\end{aligned}
$

\item 
$
\!
\begin{aligned}[t]
 D \left| \left( [ w_{xxt}  + v_{xxt} ] [ w_x  + v_x ], z_{x} z_{xx} \right) \right | \leq &~  ||w_x + v_x||_{L^{\infty}} \hspace{1mm} ||z_x||_{L^{\infty}} \hspace{1mm} ||w_{xxt} + v_{xxt}|| \hspace{1mm} ||z_{xx}|| \\[.2cm]
\lesssim &~  \left( ||w_{xx}|| + ||v_{xx}|| \right) \left( ||w_{xxt}|| + ||v_{xxt}||\right) ||z_{xx}||^2
\end{aligned}
$

\item 
$
\!
\begin{aligned}[t]
D \left| \left( [ w_{xx}  + v_{xx} ][w_{xt}  + v_{xt} ] , z_{x} z_{xx} \right) \right| \leq &~   ||w_{xx} + v_{xx}|| \hspace{1mm}||z_x||_{L^{\infty}} \hspace{1mm} ||w_{xt} + v_{xt}||_{L^{\infty}} \hspace{1mm} ||z_{xx}|| \\[.2cm]
\lesssim &~ \left( ||w_{xx}|| + ||v_{xx}|| \right) \left( ||w_{xxt}|| + ||v_{xxt}|| \right)  ||z_{xx}||^2
\end{aligned}
$
\item 
\label{Uniqueness5}
$
\!
\begin{aligned}[t]
D \left| \left([w_{xx}+v_{xx}][w_x+v_x]z_x,z_{xx} \right) \right| \leq &~C_{\varepsilon_1}(w,v)||z_x||^2+\varepsilon_1 ||z_{xx}||^2  \\[.2cm]
\lesssim &~   C_{\varepsilon_1}(w,v)||z||\hspace{1mm}||z_{xx}||+\varepsilon_1||z_{xx}||^2 \\[.2cm]
\le &~ C_{\varepsilon_1,\varepsilon_2}(w,v) ||z||^2+(\varepsilon_1+\varepsilon_2)||z_{xx}||^2 \\[.2cm]
\lesssim & ~C_{\varepsilon_1,\varepsilon_2}(w,v) \left[\int_0^t||z_t||^2d\tau+||z(0)||^2\right]+(\varepsilon_1+\varepsilon_2)||z_{xx}||^2,
\end{aligned}
$ 

where above we have used interpolation and $H^2_*$ norm equivalence in the second inequality, and the fundamental theorem of calculus in the fourth line. The dependence of $C$ above is in the sense that $C(w,v) \equiv C\left(||w_{xxx}+v_{xxx}||\hskip1mm ||w_{xx}+v_{xx}||\right)\le  C\left(\ds \sup_{0 \le t \le T}\left[ ||w(t)||_3^2+||v(t)||^2_3\right]\right)$.
\end{enumerate}

Thus, choosing $\varepsilon_1,\varepsilon_2$ sufficiently small, and putting \ref{Uniqueness1}--\ref{Uniqueness5} together, we obtain:
\begin{align}\label{constanthere}
E(t) \le c (1+C(w,v)) E(0)+C(w,v)\int_0^tE(\tau)d\tau+\int_0^t K(w,v) E(\tau)d\tau.
\end{align}
We again note the dependence of $K(w,v)$ in the sense of:
$$K(w,v)\equiv K\big(||w_{xx}+v_{xx}||\hskip1mm||w_{xxt}+v_{xxt}|| \big)\le K\big(||w||_2^2,||w_t||_2^2,||v||_2^2,||v_t||_2^2\big).$$
The constant $c$ in \eqref{constanthere} does not depend on the initial data, nor the trajectories $w,v$.

Finally, we note the $C([0,T])$ boundedness (for $T<T^*(\text{data}_w,\text{data}_v)$) of the quantities $C(w,v),~K(w,v)$ from the regularity of strong solutions, along with Remark \ref{neededreg} on the individual trajectories, $(w,w_t),~(v,v_t)$.  Taking $\sup_{[0,T]}$, we obtain:
$$E(t) \le \mathcal C_1 E(0)+ \mathcal C_2 \int_0^t E(\tau)d\tau,$$
where $t \in [0,T]$ and we have the dependencies $\ds \mathcal C_i\Big(||(w_0,w_1)||_{\mathscr H_s}, ||(v_0,v_1)||_{\mathscr H_s},||p||_{H^{1}(0,T;L^2(0,L))} \Big)$.

The standard Gr\"{o}nwall lemma yields:
\begin{equation}\label{postgron}
E(t) \le \mathcal C_1 E(0) e^{\mathcal C_2 t},~~t \in [0,T].
\end{equation}

Uniqueness of strong solutions follows immediately, since if $(w_0,w_1)=(v_0,v_1)$, the times of existence are identified and $E(0)=0$ gives $z=0$ in the sense of $L^2(0,T;L^2(0,L))$ for all valid $T$.

Continuous dependence also follows from \eqref{postgron}, but is somewhat more subtle. Upon inspection, the constants above $\mathcal C_i$ are continuous, real-valued, positive functions of their arguments. Namely, the $\mathcal C_i(\cdots),~i=1,2$ are bounded when restricting to $\overline{B_R(\mathscr H_s)}$---see Remark \ref{stupiddependencies}. Hence, for  $(w_n,w_{n,t}),(w,w_t) \in \overline{B_R(\mathscr H_s)}$  we see that $z_n = w - w_n$ has the property that
$$(z_n(0),z_{n,t}(0)) \to (0,0)  \in \mathscr H~\implies~ (z_n,z_{n,t}) \to (0,0) \in C([0,T];\mathscr H).$$

\section{The Case with Nonlinear Inertia: $\sigma=\iota=1,$ $k_2>0$}\label{inertia}
\subsection{Precise Statement of the Theorem}
\begin{theorem}\label{withiota}
Take $\sigma=\iota=1$ and $k_2>0$, and consider $p \in H^3_{loc}(0,\infty;L^2(0,L))$. For ~initial data $(w_0, w_1)  \in  \mathcal D(\mathcal A^2)^2$, strong solutions exist up to some time $T^*(w_0,w_1,p)$ and are unique on their existence interval. For all $t \in [0,T^*)$, a solution obeys the energy identity
$${E}(t)+k_2\int_0^t||w_{xxt}||^2_{L^2(0,L)} = {E}(0)+\int_0^t (p,w_t)_{L^2(0,L)} d\tau,$$
where $E(t)$ is as in \eqref{energiesdef} with $\sigma=\iota=1$. 

Restricting to $B_R(\cD(\cA^2)^2)$, for any $T<T^*(R,p)$ solutions depend continuously on the data in the sense of $C([0,T];\mathscr H)$ with an estimate on the difference of two trajectories, $z=w^1-w^2$:
$$\sup_{t \in [0,T]}\big|\big|(z(t),z_t(t))\big|\big|_{\mathscr H} \le C(R,T)\big|\big|\big(z(0),z_t(0)\big)\big|\big|_{\mathscr H},~~\forall~t \in [0,T].$$
\end{theorem}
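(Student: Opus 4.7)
The plan is to mirror the Galerkin construction in Section \ref{SectionProofOfStiffness}, while addressing three new features produced by the nonlinear inertia: the finite-dimensional ODE system is no longer in normal form, since $u^n_{tt}$ depends linearly on $w^n_{tt}$ through \eqref{NLInertia}; the nonlocal, implicit structure forces systematic use of the Kelvin-Voigt dissipation $k_2\|w_{xxt}\|^2$ at every level of a priori estimate; and the data must lie in $\cD(\cA^2)^2$---two derivatives smoother than in Theorem \ref{withoutiota}---because the solution regularity required by Definition \ref{strongsol2} (in particular $w_{tt}\in L^2(0,T;H^2_*)$) and the appearance of $w_{ttt},w_{xttt}$ upon time-differentiating \eqref{NLInertia} both demand an extra level of smoothness to close estimates. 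Damping is indispensable here: at baseline it delivers $w_t\in L^2(0,T;H^2_*)$, which is precisely what gives meaning to the nonlocal products in [NL Inertia] on approximants; at each successive level it absorbs the top-order inertial residuals.

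\textbf{Approximation and added mass.} Using the cantilever eigenfunctions $\{s_j\}$ from Section \ref{modefunx}, write $w^n(x,t)=\sum_{j=1}^n q_j(t)s_j(x)$ and test \eqref{dowellnon*} against $s_j$. The constraint \eqref{NLInertia} coerces the matrix system into the schematic form $[I+\iota\mathcal{M}(w^n)]\mathbf{q}''=\mathbf{F}(\mathbf{q},\mathbf{q}';p)$, where the added-mass operator $\mathcal{M}(w^n)$ is symmetric positive semidefinite with norm controlled by $\|w^n_x\|_{L^\infty}^2$. For small $t$ (or small data) this matrix is uniformly invertible, yielding local smooth-in-time solutions; the a priori estimates below will make the lifespan uniform in $n$. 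As in Step 3 of the proof of Theorem \ref{withoutiota}, one recovers $\|w^n_{tt}(0)\|$ by solving the equation at $t=0$ against $w^n_{tt}$, now inverting $I+\iota\mathcal{M}(w_0)$---this is where $w_0\in\cD(\cA^2)$ intervenes to ensure the resulting RHS lies in $L^2$ with a quantitative bound in terms of the data and $p(0)$.

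\textbf{Chain of energy estimates.} The base multiplier $w^n_t$ yields the dissipated energy identity
\[
E^n(t)+k_2\int_0^t\|w^n_{xxt}\|^2\,d\tau \;=\; E^n(0)+\int_0^t(p,w^n_t)\,d\tau,
\]
with $E^n$ as in \eqref{energiesdef} (now including the inertial kinetic $\tfrac12\|u^n_t\|^2$). Differentiating the equation once in time and testing with $w^n_{tt}$ produces $\tfrac{d}{dt}\mathcal{E}_1^n+k_2\|w^n_{xxtt}\|^2=R_1^n$, where the worst inertial contributions---those arising from $\partial_t u^n_{tt}$---are either rearranged as perfect $\tfrac{d}{dt}$ terms augmenting $\mathcal{E}_1^n$ (delivering the natural inertial piece $\|u^n_{tt}\|^2$) or absorbed by $k_2\|w^n_{xxtt}\|^2$ via Young's inequality; the stiffness residuals are handled exactly as in inequalities (1)--(4) of Section \ref{SectionProofOfStiffness}. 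A second round of time differentiation (multiplier $w^n_{ttt}$, justifying the $H^3_{loc}$ hypothesis on $p$), together with an equipartition multiplier (as in Step 5) to trade temporal for spatial regularity in the dissipated system, yields $w^n\in L^2(0,T;\cD(\cA^2))$ and $w^n_t\in L^2(0,T;\cD(\cA))$, from which the implicit form of the PDE produces $w^n_{tt}\in L^2(0,T;H^2_*)$. Each successive level uses the lower-level bound as a coefficient, and a nonlinear Gr\"onwall as in \eqref{NonlinGronw} closes the chain on a maximal interval $[0,T^*)$.

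\textbf{Limit passage, uniqueness, and the main obstacle.} Aubin--Lions on the resulting bounds yields strong convergence of $\{w^{n_k}\}$ in $C([0,T];H^{3-\delta})$ and of $\{w^{n_k}_t\}$ in $C([0,T];H^{1-\delta})$ for any small $\delta>0$; this identifies the [NL Stiffness] limits as in Section \ref{ExistenceSection}, and identifies the nonlocal [NL Inertia] limits by combining strong convergence of the $w_x$-factors with weak convergence of $u^{n_k}_{tt}$ in $L^2$. For uniqueness and continuous dependence, the difference $z=w^1-w^2$ satisfies a damped equation with trajectory-dependent coefficients; testing with $z_t$ and applying the $w\leftrightarrow v$ swap-symmetrization of Section \ref{SectionUniq} to both [NL Stiffness] and the new [NL Inertia] differences produces an identity whose top-order residuals (from the inertial difference) are absorbed by the LHS dissipation $k_2\|z_{xxt}\|^2$, leaving a standard Gr\"onwall inequality in the base $\mathscr{H}$-energy. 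The hard part, as expected, is the first higher-energy estimate: the implicit, nonlocal, cubic-in-time character of [NL Inertia] generates residuals at or above the baseline dissipation level, and only the presence of $k_2>0$ (together with the hypothesis $p\in H^3_{loc}(0,\infty;L^2)$ supplying $p(0),p_t(0)$ as temporal traces) makes the closure possible---this is exactly why damping cannot be dispensed with in this case.
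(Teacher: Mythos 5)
Your overall architecture matches the paper's: a Galerkin construction on cantilever eigenfunctions, a chain of damping-assisted energy estimates at successively higher levels, a nonlinear Gr\"onwall closure, and compactness for limit identification of both nonlinearities. However, there are several places where your route diverges from (or is less precise than) what the paper actually does, and one of them is a real gap.

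First, a mischaracterization of the spatial-regularity step. You invoke ``an equipartition multiplier (as in Step 5)'' to trade temporal for spatial regularity. In the inertial case the paper explicitly abandons the equipartition multiplier $\partial_x^4 w$ used in the stiffness-only proof and instead multiplies by $\partial_x^4 w_t$, which is only permissible because $k_2>0$. This produces a genuine pointwise-in-time energy identity (with $||w_{xxt}||^2+D||\partial_x^4 w||^2+D||w_x\partial_x^4w||^2+||u_{xxt}||^2$ as the conserved quantity and $k_2||\partial_x^4 w_t||^2$ as the dissipation), rather than a time-averaged equipartition bound. The distinction matters because the inertial residuals ($\mathcal J_1$, $\mathcal J_2$, and the terms built from $u_{xxt},u_{xxtt}$) need the $k_2||\partial_x^4 w_t||^2$ dissipation on the left to be absorbed; the equipartition approach does not furnish that term.

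Second, you over-claim the resulting spatial regularity: $w^n\in L^2(0,T;\cD(\cA^2))$ is not obtained, nor needed. The paper lands on $w\in L^\infty(0,T;\cD(\cA))$, $w_t\in L^\infty(0,T;\cD(\cA))$, $w_{tt}\in L^\infty(0,T;H^2_*)$, with $w_{tt}\in H^2_*$ coming \emph{directly} from the Energy Level 3 estimate (the $||w_{xxtt}||^2$ term in $E_3$), not by going back through the implicit form of the PDE.

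Third, and this is the genuine gap: you locate the need for $w_0\in\cD(\cA^2)$ in the bound on $||w^n_{tt}(0)||$ at Step 3, saying ``this is where $w_0\in\cD(\cA^2)$ intervenes.'' In fact the paper's Step 3 bound \eqref{uttbound} requires only $\cD(\cA)$ data. The $\cD(\cA^2)$ hypothesis is required for the much more delicate Step 6: the uniform bound on the initial jerk $||w^n_{ttt}(0)||$ and $||u^n_{ttt}(0)||$, which in turn hinges on a uniform bound for $||\partial_x^4 w^n_{tt}(0)||$. That bound is obtained by testing the weak form against $\partial_x^8 s_j$, expanding $\partial_x^5[w^n_x\int_x^L u^n_{tt}]$ and $\partial_x^4 u^n_{tt}$ by brute force, and classifying the resulting nine-plus terms into three types, each interpolated with the help of the already-established $L^2$ bounds on $w^n_{tt}(0)$ and $u^n_{tt}(0)$. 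Your sketch omits this step entirely, yet without it the Energy Level 3 estimate has no uniform initialization and the chain does not close, so you cannot conclude $w_{tt}\in L^\infty(0,T;H^2_*)$, and hence cannot obtain uniqueness (which depends on $w_{tt}\in C([0,T];H^1)$).

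Finally, a note on uniqueness that is a difference rather than a gap: you propose to apply the $w\leftrightarrow v$ swap-symmetrization from Section \ref{SectionUniq}. The paper explicitly does \emph{not} do this here, because the Kelvin--Voigt dissipation $k_2||z_{xxt}||^2$ is strong enough to absorb the troublesome $z_xz_{xxt}$ and $z_{xx}z_{xt}$ differences directly (via Young's inequality with $\varepsilon$), so the symmetrization trick becomes unnecessary. Your route is not wrong, but the simpler damping-based argument is what the paper uses, and it also cleanly handles the new inertial differences $\mathcal N$ and $\mathcal O$.
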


\begin{remark}
The dependence $T^*=T^*\big(||(w_0,w_1)||_{\mathcal D(\mathcal A^2) \times \mathcal D(\mathcal A^2)}, ||p||_{H^3(0,T;L^2(0,L)}\big)$.
\end{remark}

\subsection{Proof Outline}
For this proof we utilize a modified strategy from the previous section, as the presence of inertia (and damping) change the sequence of multipliers. Indeed, with the addition of damping (as per the discussion above), we can obtain a sequence of true energy estimates at various levels, and again exploit the techniques in the proof of the {\em Theorem} \ref{withoutiota} after closing estimates. Due to the structure of {\bf [NL Inertia]}, even in the presence of velocity-regularizing Kelvin-Voigt damping, further additional regularity (hence higher estimates) will be needed in the construction of solutions and their uniqueness. 
\subsection{Proof of Theorem \ref{withiota}}
\subsubsection{Existence} \label{SectionExistIne} The setup here is the same as in Section \ref{ExistenceSection}. Since the inertial term and damping ($\iota=1$ and $k_2>0$) are additional terms to the stiffness equation, we will proceed through the relevant calculations corresponding only to {\bf [NL Inertia]} ({\bf [NL Stiffness]} calculations are unchanged). {\em Kelvin-Voigt damping} appears in the final estimates with no discussion, owing to its linearity. 

\noindent {\bf Step 1 - Approximants:} Again, consider smooth data, $w_0 \in \mathcal{D}(\mathcal{A}^2)$ and $w_1 \in \mathcal{D}(\mathcal{A}^2) $, and take Fourier partial sums as  $\{w_0^n\}_{n=1}^{\infty}$ and $\{w_1^n\}_{n=1}^{\infty}$. Then, as before, we have:
\begin{align}\label{InitialData2} 
w_0^n ~ \rightarrow w_0~~\text{in } \mathcal{D}(\mathcal{A}^2);~~ w_1^n  \rightarrow w_1 ~~\text{in } \mathcal{D}(\mathcal{A}^2) 
\end{align}
and
\begin{equation*}
w^n(x, t) \coloneqq \sum_{j=1}^{n} q_j(t) s_j(x),
\end{equation*}
for $q_j(t)$ smooth functions of time. Throughout this section we freely use~ $u^n=-(1/2)\int_0^x[w^n_x]^2d\xi$.

From the weak form, $(\ref{weakform})$ (this time taken with $\iota=1$ and $k_2>0$), we construct the corresponding matrix system using the tensors $\mathcal S_{ijkl}$ from \eqref{Stensor} and
\begin{align}\label{Itensor}
\mathcal I_{ijkl}=&~\left(\int_0^x\phi_{i,x}\phi_{j,x},\int_0^x\phi_{k,x}\phi_{l,x}\right).
\end{align}

\begin{remark}
The following calculation for the inertial tensor connects $\mathcal I_{ijkl}$ back to the weak form \eqref{weakform}: {\small
\begin{align*}
\mathcal I_{ijkl} 
 = -\int_0^L \left[\left(\partial_x\int_x^L \int_0^{\xi}\phi_{i,x}\phi_{j,x}d\xi_2d\xi\right)\int_0^x\phi_{k,x}\phi_{l,x}d\xi\right]dx 
  =~\int_0^L \left[\left(\int_x^L \int_0^{\xi}\phi_{i,x}\phi_{j,x}d\xi_2d\xi\right)\phi_{k,x}\phi_{l,x}\right]dx.
\end{align*}}
\end{remark}

Analogously to \eqref{sep1}, we then have the separated form of the ODE system: {\small
\begin{align}
q''_{i}(s_i,s_j)+\left[q_i''(q_i)^2+(q_i')^2q_i\right]\mathcal I_{iiij}+k_2q_i'\left[\delta_i^4(s_i,s_j)\right]+Dq_i\left[k_i^4(s_i,s_j)\right]+Dq_i^3\left[\mathcal S_{iiij}+\mathcal S_{jiii}\right]=(p,s_j).
\label{sep2}
\end{align}}
Although this ODE system is not an evolution (it is quasilinear in time), it is polynomially nonlinear in the $q_i$'s. Thus, via the implicit function theorem, we have {\em local} solvability for $q_i''$ in terms of the other quantities and lower order terms in $q$. Therefore, local-in-time, there are $C^4(0,t^*(n))$ solutions, again noting the regularity assumption on $p$. 

\vskip.1cm \noindent{\bf Step 2 - Energy Level 0:} For this step we examine the inertial term that corresponds to {\bf Level 0} which was described in {\bf Step 2} of Section \ref{ExistenceSection} for the stiffness-only equation ($\iota=k_2=0$).
\begin{align*}
\left( \partial_x \left [ w_x \int_x^L u_{tt} \right ] , w_{t} \right ) =  - \left(  \int_x^L u_{tt},w_x w_{xt} \right ) = & - \left( \int_x^L u_{tt}, \partial_{x} \int_0^x  w_x w_{xt} \right ) 
= \left( u_{tt}, u_{t} \right ) = \displaystyle \frac{1}{2} \frac{d}{dt} ||u_{t}||^2.
\end{align*}

Denote ~
$ \ds \mathcal{E}^n_0(t) = E^n_0(t) + I^n_{0}(t) \ge 0$, where ~$\ds I^n_{0}(t) = \frac{1}{2} ||u^n_{t}||^2$ and $E^n_{0}(t)$ is as in \eqref{First Energy}. Estimating conservatively, we have:
\begin{equation}
\mathcal{E}^n_0(t) + k_2 \int_0^t ||w^n
_{xxt}||^2 d \tau \leq \mathcal{E}^n_0(0) + \frac{1}{2} \int_0^t ||p||^2 + \frac{1}{2} \int_0^t \mathcal{E}^n_0(\tau) d \tau ~~\text{for all}~~ t>0.
\end{equation}

From \eqref{InitialData2} and $u_{t} = - \int_0^x w_{x}w_{xt}$, so ~$||u_t|| \lesssim ||w||_{\cD(\cA^{1/2})}||w_t||_{\cD(\cA^{1/2})}$. It is immediate that $\left\{ \cE^n_0 (0) \right\}_{n=1}^{\infty}$ is uniform-in-$n$ controlled by ~$||(w_0,w_1)||_{\cD(\cA^{1/2})}$. Hence, the standard Gr\"{o}nwall inequality yields:
\begin{equation}
\label{FirstLevel2}
\mathcal{E}^n_{0}(t) \leq g_0 \left( p, ||w_0||_{\mathcal D(\mathcal A)},||w_1||_{H^2_{*}} \right) e^{t/2} ~~\text{for all}~~ t>0.
\end{equation}
The function $g_0$ is analogous as that described in \eqref{FirstLevel}.

\vskip.1cm
\noindent {\bf Step 3 - Uniform Boundedness of Initial Inertia}: To utilize the additional a priori bound described in the next step, we need  the quantity {\small $||w_{tt}^n(0)||^2+||u_{tt}^n(0)||^2$} to be uniformly bounded by appropriate norms on $w_0$ and $w_1$.
Our proof of uniform $L^2(0,L)$ boundedness {\small $\{w^n_{tt}(0)\}_{n=1}^{\infty}$} in {\bf Step 3} in the proof of {\em Theorem} \ref{withoutiota} {\em cannot be invoked for this calculation}, since additional terms now appear in the equation for $\iota=1,~k_2>0$.

From the equation, approximate solutions  satisfy the relation
\begin{align}
||w^n_{tt}||^2   + D(\partial_x^4 w^n,w^n_{tt})+k_2\left( \partial_x^4 w^n_t, w^n_{tt} \right)- D(\partial_x{([w^n_{xx}]^2 w^n_x)}, w^n_{tt})\nonumber  + D(\partial_{xx}{([w^n_x]^2 w^n_{xx})}, w^n_{tt}) \\ + \left( \partial_x \left [ w^n_x \int_x^L u^n_{tt} \right ] , w^n_{tt} \right) 
= (p, w^n_{tt}).  
\end{align}
Examining the inertial term:
\begin{align*}
\left( \partial_x \left [ w^n_x \int_x^L u^n_{tt} \right ] , w^n_{tt} \right) 
=& - \left(    u^n_{tt}  , \int_0^x w^n_x w^n_{xtt} \right) =  ||u^n_{tt}||^2 + \left(    u^n_{tt}  , \int_0^x \left[ w^n_{xt} \right]^2 \right),
\end{align*}
where we used the expansion of $u_{tt}$ in terms of $w$ as in \eqref{NLInertia}.
Combining everything, we have the identity:
\begin{align}
||w^n_{tt}||^2 + ||u_{tt}^n||^2 =&~  (p, w^n_{tt})- \left(    u^n_{tt}  , \int_0^x \left[ w^n_{xt} \right]^2 \right) -k_2 \left( \partial^4_x w_t^n, w^n_{tt} \right)  \label{weakfort3} \\ \nonumber
&- D(\partial_x^4 w^n,w^n_{tt})+ D(\partial_x{([w^n_{xx}]^2 w^n_x)}, w^n_{tt})- D(\partial_{xx}{([w^n_x]^2 w^n_{xx})}, w^n_{tt}) 
.  
\end{align}

Since approximate solutions (and $p$) are continuous in time, we take the time-trace at $t=0$ in \eqref{weakfort3} and use Young's inequality to obtain the estimate:
\begin{align*}
||w^n_{tt}(0)||^2 + ||u_{tt}^n(0)||^2  \leq &   ~  \delta||u_{tt}^n(0)||^2+ c_{\delta} ||w^n_{xxt}(0)||^4 +  \varepsilon||w^n_{tt}(0)||^2  \\
 &+c_{\varepsilon} \Big[  ||p(0)||^2+ ||\partial_x^4w^n_t(0)||^2 +||\partial_x^4w^n(0)||^4 ||w^n_{xxx}(0)||^2   \\
 &+||w^n_{xx}(0)||^2||w^n_{xxx}(0)||^4+ \big( 1 + ||w^n_{xx}(0)||^4 \big)  ||\partial_x^4 w^n(0)||^2 \Big] .
\end{align*}

Choosing sufficiently small $\delta$ and $\varepsilon$, and using \eqref{InitialData2}, we can finally conclude that
\begin{equation}
\label{uttbound}
||w_{tt}^n(0)||^2+||u_{tt}^n(0)||^2 \le C\left(||w_0||_{\mathcal D(\mathcal A)}, ||w_1||_{\mathcal D(\mathcal A)}, p(0)\right). 
\end{equation}
This fact will be used below in the next energy level.

\vskip.1cm \noindent
{\bf Step 4 - Energy Level 1:} \label{Level2FULL} In this step we proceed with examining the inertial term from the {\em Energy Level 1} estimate described in {\bf Step 3} of Section \ref{ExistenceSection}. Our aim is to control the conserved quantity {\small $||u_{tt}||^2$}, corresponding to a (formal) time differentiation of the equations. 
 Differentiating the inertial term in time and multiplying by $w_{tt}$ we form:
\begin{align*}
\left ( \partial_{xt} \left [ w_x \int_x^L u_{tt} \right ] \ , w_{tt} \right) = 
& - \left (  w_{xt} \int_x^L u_{tt}  , w_{xtt} \right ) - \left (  w_{x} \int_x^L u_{ttt}  , w_{xtt} \right ) \\
\equiv & \hspace{2cm} \mathcal{I}_1 ~~~~~~~~~~+~~~~~~~~~~~\mathcal{I}_2.
\end{align*}

For $\mathcal{I}_2$, we have ~$\ds \mathcal{I}_2= - \left (\int_x^L u_{ttt}  ,w_{x} w_{xtt}\right)
= - \left (u_{ttt} ,\int_0^x w_{x} w_{xtt}\right).$ 
Recalling $~\displaystyle u_{tt}(x)=-\int_0^x \left [w_{xt}^2+w_xw_{xtt} \right ]d\xi$, 
we obtain:
\begin{align*}
\mathcal{I}_2 
= ~\left (u_{ttt}, u_{tt} \right ) + \left (  u_{ttt}  ,\int_0^x w^2_{xt}  \right )  
=~ \frac{1}{2} \frac{d}{dt} ||u_{tt}||^2 +  \frac{d}{dt} \left ( u_{tt}, \int_0^x w^2_{xt} \right ) -  2 \left ( u_{tt}, \int_0^x w_{xt}w_{xtt} \right ).
\end{align*}
The second term above will be estimated so that it can be absorbed by pointwise-in-time conserved quantities. The third term is identical to $\mathcal{I}_1$, and
\begin{align*}
\mathcal{I}_1 = & \left (  \int_0^x \left [w_{xt}^2+w_xw_{xtt} \right ]  , \int_0^xw_{xt} w_{xtt} \right ) 
= ~ \frac{1}{4} \frac{d}{dt} \left | \left| \int_0^x w^2_{xt}\right | \right |^2 +\left (  \int_0^x w_xw_{xtt}   ,\int_0^x w_{xt} w_{xtt} \right ).
\end{align*}

Combining these calculations, we obtain:
\begin{equation}
\label{eq:14}
\frac{d}{dt} \left [ \frac{1}{2} ||u_{tt}||^2 + \frac{3}{4} \left | \left | \int_0^x w^2_{xt} \right | \right |^2 + \left (u_{tt}, \int_0^x w^2_{xt} \right ) \right ] = - 3\left (  \int_0^x w_xw_{xtt}   ,\int_0^x w_{xt} w_{xtt} \right ).
 \end{equation} 
Utilizing once more the approximate inextensibility relation, we can rewrite:
\begin{equation*}
\frac{d}{dt} \left [ \frac{1}{2} ||u_{tt}||^2 + \frac{3}{4} \left | \left | \int_0^x w^2_{xt} \right | \right |^2 + \left (u_{tt}, \int_0^x w^2_{xt} \right ) \right ] =3 \left ( u_{tt}  , \int_0^x w_{xt}w_{xtt} \right ) + 3\left (  \int_0^x w^2_{xt}   ,\int_0^x w_{xt} w_{xtt} \right ).
\end{equation*} 

%

\noindent Poincar\'e and the Sobolev embedding into $L^{\infty}$ yields:
\begin{align}\label{theonetalking}
\frac{d}{dt} \left [ \frac{1}{2} ||u_{tt}||^2 + \frac{3}{4} \left | \left | \int_0^x w^2_{xt} \right | \right |^2 + \left (u_{tt}, \int_0^x w^2_{xt} \right ) \right ] \leq  C_{\varepsilon_1} & \left [ ||u_{tt}||^4 +  \left | \left | \int_0^x w^2_{xt} \right | \right |^4 \right ] + C_{\varepsilon_2} ||w_{xxt}||^4 
 \\& +  \left(\varepsilon_1 + \varepsilon_2 \right)  \vspace{1mm}||w _{xxtt}||^2.\nonumber
\end{align} 

For the unsigned, conservative term on the LHS we utilize Young's inequality with precise coefficients:
$$\left | \left(u_{tt},\int_0^xw_{xt}^2\right) \right| \le~ \frac{3}{8}||u_{tt}||^2+\frac{2}{3}\left|\left|\int_0^xw_{xt}^2\right|\right|^2, $$
which is sufficient for absorption on the LHS of \eqref{theonetalking}.

Now, let us introduce more notation for the estimate resulting from the above formal calculations:~ \begin{equation}\label{higherenergy1} I^n_1(t) = \frac{1}{2} ||u^n_{tt}||^2 + \frac{3}{4} \left | \left | \int_0^x \left[ w^2_{xt} \right ] ^n \right | \right |^2 ~~~\text{and}~~~ \mathcal{E}^n_1(t) = E^n_1(t) + I^n_{1}(t),\end{equation}
with $E^n_1(t)$ given in the stiffness analysis by \eqref{secondenerg}. Compiling everything together and absorbing damping terms on the RHS, we then have that the approximate solutions $w^n$ satisfy:
\begin{align}
\label{InertiaWttEstimate}
\cE^n_1(t) + k_2 \int_0^t ||w^n_{xxtt}||^2 \leq &~  g_1\left(p_t, ||w_0||_{\mathcal D(\mathcal A)}, ||w_1||_{\mathcal D(\mathcal A)} \right) + g_2\left(p,||w_0||_{\mathcal D(\mathcal A)},||w_1||_{H^2_{*}} \right)t  \\  \nonumber   &+ C \int_0^t \left [ \cE^n_1(\tau) \right ] ^2d\tau.
\end{align}
The dependencies for $g_1$ and $g_2$ follow after the application of \eqref{FirstLevel2} and \eqref{uttbound}. Note that $C>0$ here {\em does not depend} on $w_0, w_1$ or $p$. Dependence on $p$ is taken in the sense of \eqref{timediff}.

\vskip.1cm\noindent
{\bf Step 5 - Energy Level 2:} \label{Level3Full} In contrast to what was done in the stiffness-only estimate for {\bf Step 5} of Section \ref{SectionProofOfStiffness}, we proceed to obtain an actual {\em energy estimate} for higher spatial regularity. Indeed, the inclusion of the strong damping $k_2>0$ allows us to consider improved regularity of the solution by employing the multiplier $\partial_x^4 w_{t}$, not permissible when $k_2=0$. Thus the calculations for the from Section \ref{SectionProofOfStiffness} are modified below.

We proceed by multiplying the equation by $\partial_x^4 w_{t}$ and spatially integrating, with appropriate integration by parts. Here it is important to take note of the boundary conditions associated to eigenfunctions in Section \ref{modefunx} and hence to approximants $w^n$ and all of their time derivatives as well. 

Isolating conserved quantities and gathering terms yields:
\begin{align*}
\frac{1}{2}  \frac{d}{dt}& \Big[ ||w_{xxt}||^2 +   D||\partial_x^4 w||^2   +  D ||w_x \partial_x^4 w||^2 \Big ] +k_2||\partial_x^4w_t||^2 \\ =&~  \left( p, \partial_x^4 w_t \right)-4D \left( w_{x}w_{xx}w_{xxx}, \partial_x^4 w_t \right) -   D \left( w^3_{xx}, \partial_x^4 w_t \right) 
-\left( \partial_{x} \left [ w_{x} \int_x^L u_{tt} \right ] , \partial_x^4 w_{t} \right).
\end{align*}

We first estimate quantities associated with stiffness using (as before) interpolation, the Sobolev embeddings, and Young's inequality:
\begin{enumerate}

\item 
$
\!
\begin{aligned}[t]
 4D \left| \left( w_{x}w_{xx}w_{xxx}, \partial_x^4 w_t \right) \right|
& \leq ~ C_{\delta_1}  ||w_{xx}||^8 +  C_{\delta_1} ||\partial_x^4 w||^4 + \delta_1||\partial_x^4 w_t||^2
\end{aligned}
$ 

\item 
\label{StiffnessofInertia}
$
\!
\begin{aligned}[t]
    D  \left| \left( w^3_{xx}, \partial_x^4 w_t \right) \right|
    \leq &~ C_{\delta_2} ||w_{xx}||^4_{L^{\infty}} ||w_{xx}||^2 + \delta_2||\partial_x^4 w_t||^2 \leq  C_{\delta_2} \left(||w_{xx}||^{16/3}_{L^{\infty}} + ||w_{xx}||^{8} \right)  + \delta_2||\partial_x^4 w_t||^2 .
\end{aligned}
$
\end{enumerate}
where we have used Young's Inequality $p = 4/3$ and $q=4$. Subsequently, we interpolate $||w_{xx}||^{16/3}_{L^{\infty}}$ as:
\begin{align}
\label{InterpArg2}
||w_{xx}||^{16/3}_{L^{\infty}} \leq ||w_{xx}||^{16/3}_{1/2 + \epsilon} \leq ||w_{xx}||^{10/3} ||w_{xx}||_2^{2} \lesssim ||w_{xx}||^{20/3} + ||\partial_x^4 w||^4 ,
\end{align}
where we chose $\epsilon = 1/4$ and used Young's inequality again with $p=2$ and $q=2$.

According to the above, we introduce the notation:
\begin{equation}
\label{secondFullStiffness}
E^n_2(t) = ||w^n_{xxt}||^2 +   D||\partial_x^4 w^n||^2   +  D ||w^n_x \partial_x^4 w^n||^2.
\end{equation}

We now estimate the {\em inertial} contribution above, aiming to control the term ~$||u_{xxt}||^2$: 
\begin{align*}
\left( \partial_{x} \left [ w_{x} \int_x^L u_{tt} \right ] , \partial_x^4 w_{t} \right) = & \left(  w_{xx} \int_x^L u_{tt},\partial_x^4 w_{t} \right)- \left( w_{x}u_{tt} , \partial_x^4 w_{t} \right)  \\
\equiv& ~~~~~~~~~~~~\mathcal{J}_1 ~~~~~~~~~~~~+~~~~~~~~~\mathcal{J}_2.
\end{align*}
We can directly bound $\mathcal{J}_1$ as follows:
\begin{align*}
 \left| \mathcal{J}_1 \right| \leq  C_{\delta_3} ||w_{xx}||^2_{L^{\infty}}||u_{tt}||^2 + \delta_3 ||\partial_x^4 w_{t}||^2 
\leq  C_{\delta_3} ||\partial_x^4 w||^4 + C_{\delta_3}||u_{tt}||^4 + \delta_3 ||\partial_x^4 w_{t}||^2.
\end{align*}
For $\mathcal{J}_2$, we note that $
u_{xxt} = - w_{x}w_{xxt} - w_{xx}w_{xt}$, and use this expression to integrate by parts twice:
\begin{align} 
\label{Gexpression}
\mathcal{J}_2 
=& - \left( u_{tt}, w_{xxx}w_{xxt} \right) - 2 \left( u_{xxt}, w_{xx}w_{xxt} \right) + \frac{1}{2} \frac{d}{dt} ||u_{xxt}||^2 + \left( u_{xxtt}, w_{xx}w_{xt} \right). 
\end{align}

We estimate the remaining unsigned terms:
\begin{enumerate}
\item 
$
\!
\begin{aligned}[t]
   - 2 \left( u_{xtt}, w_{xx}w_{xxt} \right) = 2 \left( u_{tt}, w_{xxx}w_{xxt} \right) + 2 \left( u_{tt}, w_{xx}w_{xxxt} \right),\\
\end{aligned}
$ 

we can combine the first term on the RHS with the first in \eqref{Gexpression}, then control each term:
\begin{enumerate}[label=(\roman*)]
\item 
$ \left| \left( u_{tt}, w_{xxx}w_{xxt} \right) \right| \lesssim ||u_{tt}||^4 +  ||\partial_x^4 w||^4+ ||w_{xxt}||^4$ \\
(where we used Young's inequality with 1 for the $u_{tt}$ term)
\item 
$
\!
\begin{aligned}[t]
  \left|  \left( u_{tt}, w_{xx}w_{xxxt} \right) \right| = \left| \left( w_{xx} u_{tt}, w_{xxxt} \right)\right|
   \leq & ~C_{\delta_4} ||\partial_x^4 w||^4+ C_{\delta_4}||u_{tt}||^4 +  \delta_4 ||\partial_x^4 w_{t}||^2.
\end{aligned}
$ 
\end{enumerate}

\item 
$
\!
\begin{aligned}[t]
   \left( u_{xxtt}, w_{xx}w_{xt} \right) = \left( \partial_t [ u_{xxt} ] , w_{xx}w_{xt} \right) = \frac{d}{dt} \left( u_{xxt}, w_{xx}w_{xt} \right) - \left( u_{xxt}, w_{xxt}w_{xt} \right) - \left( u_{xxt}, w_{xx}w_{xtt} \right),
\end{aligned}
$ \\
where each term is bounded as follows:
\begin{enumerate}[label=(\roman*)]
\item 
$
\!
\begin{aligned}[t]
   \left| \left( u_{xxt}, w_{xxt}w_{xt} \right) \right| \lesssim ||u_{xxt}||^4 + ||w_{xt}||^2_{L^{\infty}} ||w_{xxt}||^2 \lesssim ||u_{xxt}||^4 + ||w_{xxt}||^4    
\end{aligned}
$ 

\item 
$
\!
\begin{aligned}[t]
  \left| \left( u_{xxt}, w_{xx}w_{xtt} \right) \right|= \left| \left(w_{xx} u_{txx}, w_{xtt} \right) \right|
  \leq & ~C_{\varepsilon_3} ||\partial_x^4 w||^4 + C_{\varepsilon_3}  ||u_{xxt}||^4 + \varepsilon_3 ||w_{xxtt}||^2
\end{aligned}
$ 
\item 
$
\!
\begin{aligned}[t]
 \left|  \left( u_{xxt}, w_{xx}w_{xt} \right) \right|
  & \leq  \varepsilon ||u_{xxt}||^2 + C_{\varepsilon} ||w_{xt}||^2_{L^{\infty}}||w_{xx}||^2  \leq  \varepsilon ||u_{xxt}||^2 + C_{\varepsilon} ||w_{xt}||^{9/4}_{L^{\infty}} + C_{\varepsilon} ||w_{xx}||^{18}, \\
\end{aligned}
$ 
\end{enumerate}
where we used Young's inequality with $p = 9/8$ and $q=9$. Then we use interpolation for $||w_{xt}||^{9/4}_{L^{\infty}}$:
\begin{align}
\label{InterpolationTHISONE}
||w_{xt}||^{9/4}_{L^{\infty}} \leq ||w_{t}||^{9/4}_{3/2+\epsilon} \leq ||w_{t}||^{9/32} ||w_{xxt}||^{63/32} \leq C_{\varepsilon_p}||w_{t}||^{18} + \varepsilon_p||w_{xxt}||^{2},
\end{align}
where we chose $\epsilon = 1/4$ and Young's inequality with $p=64$ and $q=64/63$.

By choosing $\varepsilon$ and $\varepsilon_p$ sufficiently small, the above terms can be absorbed. Additionally, we note that from the previous energy bounds, \eqref{FirstLevel2}, $||w^n_{t}||$ and $||w^n_{xx}||$ are bounded in any power in which they appear.
\end{enumerate}

Denoting: ~$$I^n_2(t) = \frac{1}{2} ||u^n_{xxt}||^2  ~~~\text{and}~~~ \mathcal{E}^n_2(t) = E^n_2(t) + I^n_{2}(t),$$
where $E^n_2(t)$ is given by \eqref{secondFullStiffness}, we can obtain a clean estimate. It is true from \eqref{InitialData2} that, as before, $ \left\{\mathcal{E}^n_2(0) \right\}_{n=1}^{\infty}$ is uniformly bounded in terms of $||(w_0,w_1)||_{\cD(\cA)^2}$. Thus, combining \eqref{InertiaWttEstimate} with a compilation of the calculations described in this step and absorbing damping terms on the RHS, we have the estimate
\begin{align}
\mathcal{E}^n_1(t) +& \mathcal{E}^n_2(t) + k_2 \int_0^t \left[ ||w^n_{xxtt}||^2 + ||\partial_x^4 w^n_{t}||^2 \right]  d \tau   \nonumber \\
\lesssim&~ g_3\left(p_t, ||w_0||_{\mathcal D(\mathcal A)}, ||w_1||_{\mathcal D(\mathcal A)} \right)+ g_4\left(p,||w_0||_{\mathcal D(\mathcal A)},||w_1||_{H^2_{*}} \right)t+  \int_0^t \left [ \mathcal{E}^n_1(\tau) + \mathcal{E}^n_2(\tau) \right]^2 d\tau.  &  
\end{align}
We point out once again that the $C>0$ associated to `$\lesssim$' above  {\em does not depend} on $w_0, w_1$ or $p$ and  that the denoted dependence on $p$ (and its derivative) is taken in the sense of \eqref{timediff}.   
 
 Hence, disregarding the damping integral and invoking {\em nonlinear} Gr\"{o}nwall \cite{gronwall}, we obtain:
\begin{equation}
\label{NonlinGronwFULLMODEL}
\mathcal{E}^n_1(t) + \mathcal{E}^n_2(t) \leq  \frac{ g_3  + g_4t }{1- C \left[g_3 t + g_4 t^2 \right]} \equiv M_2(t)~~0 \leq t < T_1^* ~~\text{where}~~ T_1^* = \sup_{t} \left \{ C \left[g_3 t + g_4 t^2 \right] <1 \right \}. 
\end{equation}
From \eqref{NonlinGronwFULLMODEL}, we deduce that the Galerkin approximations $w^n$ satisfy a uniform-in-$n$ a priori bound on $[0,T]$ for any $T<T_1^*$:
$$0 \le \mathcal{E}^n_1(t) + \mathcal{E}^n_2(t) \le M^*_2(T) \equiv  \max_{t \in [0,T]} M_2(t).$$
This, along with \eqref{FirstLevel2}, provides uniform-in-$n$ boundedness in the associated norms of $\mathcal{E}_0$, $\mathcal{E}_1$ and $\mathcal{E}_2$ for a finite time depending on the initial data.

\vskip.1cm\noindent
{\bf Step 6 - Boundedness of Initial Jerk:} It is apparent from the expression of the {\bf [NL Inertia]} in \eqref{NLInertia} that the existence of strong solutions requires higher regularity of $w_{tt}$. We obtain this via yet another energy level, corresponding to two temporal differentiations of the equations. To begin, we again need uniform estimates of $t=0$ quantities appearing in the energy estimates. We remark that the resulting regularity of solutions obtained here is requisite also in the latter proof of uniqueness. Lastly, we note that in order to obtain this estimate (as well as that in the previous sections for $\cE_1^n$ and $\cE_2^n$) with $\iota=1$, the presence of the damping term $k_2>0$ is critical.

For the upcoming energy inequality for $\cE_3^n(t)$, we must justify boundedness in $n$ of ~{\small $\left\{||w^n_{ttt}(0)||\right\}_{n=1}^{\infty}$}, {\small$\left\{||u^n_{ttt}(0)||\right\}_{n=1}^{\infty}$}. To that end, the weak equations of motion \eqref{weakform} hold on approximants $w^n$ and can be differentiated in time for any fixed test function $\phi$. 
Then, by choosing $\phi= s_j(x)$, multiplying \eqref{weakform} by $q^{'''}_j(t)$ and summing over $j=1,2, \ldots, n$, we obtain:
\begin{align}
\label{atzero}
||w^n_{ttt}||^2 + D \left( \partial^4_x w^n_t, w^n_{ttt} \right)  + k_2 \left(  \partial^4_x w_{tt}^n, w^n_{ttt} \right) -D & \left( \partial_{xt}\left[(w^n_{xx})^2w^n_x \right],  w^n_{ttt} \right) + D  \left( \partial_{xxt}\left[w^n_{xx}(w^n_x)^2\right], w^n_{ttt} \right) \nonumber \\ & +\left( \partial_{xt}\left[w^n_x\int_x^L u^n_{tt} \right], w^n_{ttt} \right) - \left( p_{t}, w^n_{ttt} \right) = 0.
\end{align}

Differentiating directly, we have $u_{ttt} = - \int_0^x \left[ 3w_{xt}w_{xtt} + w_{x}w_{xttt} \right]d\xi,$ which yields:
\begin{align*}
\left( \partial_{xt} \left[w^n_x\int_x^L u^n_{tt} \right], w^n_{ttt} \right) =  - \left( \partial_{t} \left[w^n_x\int_x^L u^n_{tt}\right], w^n_{xttt} \right) 
= & - \left( w^n_{xt} \int_x^L u^n_{tt}, w^n_{xttt} \right) - \left( w^n_{x} \int_x^L u^n_{ttt}, w^n_{xttt} \right) \\
\equiv & ~~~~~~~~~~~~~~~\mathcal{K}_1 ~~~~~~~~~~~+~~~~~~~~~~~~ \mathcal{K}_2.
\end{align*}

For $\mathcal{K}_1$ we proceed by undoing the integration by parts which yields:
\begin{align*}
\mathcal{K}_1 =  \left( w^n_{xxt} \int_x^L u^n_{tt}, w^n_{ttt} \right)   - \left( w^n_{xt}  u^n_{tt}, w^n_{ttt} \right).
\end{align*}
These two terms can now be transferred to the right hand side and be estimated.
For $\mathcal{K}_2$ we recall the expression for $u_{ttt}$ above, and by adding and subtracting appropriate terms we have:
\begin{align*}
\mathcal{K}_2 = - \left(  u^n_{ttt}, \int_0^x w^n_{x}w^n_{xttt} \right) = ||u^n_{ttt}||^2 + 3 \left(  u^n_{ttt}, \int_0^x w^n_{xt}w^n_{xtt} \right).
\end{align*}
Grouping everything together, and absorbing {\small $||w^n_{ttt}(0)||^2$} and {\small $||u^n_{ttt}(0)||^2$} from  the RHS, we obtain:
\begin{align}
\label{threeder}
||w^n_{ttt}(0)||^2 + ||u^n_{ttt}(0)||^2 \leq h_1 \left( p_t(0), \partial_x^k w^n(0), \partial_x^lw^n_{t}(0), \partial_x^4 w^n_{tt}(0) \right),~~ k,l=1,2,3,4,
\end{align} 
with $h_1$ is polynomial in its slots. As we can see from the above expression, it is now crucial to establish the boundedness of the sequence {\small $\left\{ \partial_x^4 w^n_{tt} (0) \right\}_{n=1}^{\infty}$} in $L^2(0,L)$ in terms of the data, $(w_0,w_1) \in \cD(\cA^2)$.

To achieve this bound, we revisit the weak form  \eqref{weakform} and test with $\phi = \partial_x^8 s_{j}(x) $, then multiplying by $q''_{j}(t)$ and summing over $j=1,2, \ldots, n,$ yielding (after some integration by parts): 
\begin{align*}
||\partial_x^4 w^n_{tt}||^2    + \left(  \partial^5_x\left[w^n_x\int_x^L u^n_{tt} \right], \partial_x^4 w^n_{tt} \right) = \left(  p,  \partial_x^4 w^n_{tt} \right) - D \left(  \partial^8_x w^n, \partial_x^4 w^n_{tt} \right)  - k_2 \left(  \partial^8_x w_t^n, \partial_x^4 w^n_{tt} \right)\\  +D \left(  \partial^5_x\left[(w^n_{xx})^2w^n_x \right] , \partial_x^4 w^n_{tt} \right) -D \left(  \partial^6_{x}\left[w^n_{xx}(w^n_x)^2\right], \partial_x^4 w^n_{tt} \right).
\end{align*}
Brute force yields:
\begin{align*}
\label{fiveInertia}
\partial^5_x\left[w^n_x\int_x^L u^n_{tt} \right] =&~ \partial_x^6w^n \int_x^L u^n_{tt} - 5[\partial_x^5w ^n u^n_{tt}+w^n_{xx}  u^n_{xxxtt}] - 10 [\partial_x^4 w ^n u^n_{xtt} + w^n_{xxx}  u^n_{xxtt}]  -w^n_{x} \partial_x^4 u^n_{tt}. \\
\partial_x^4 u_{tt} =&~ - \left [ 6w_{xxt}w_{xxxt} + 2 w_{xt} \partial_x^4 w_{t} + \partial_x^4 w  w_{xtt}+3w_{xxx}w_{xxtt} + 3w_{xx}w_{xxxtt} +w_{x} \partial_x^4 w_{tt} \right ].\\
%
- \left( w^n_{x} \partial_x^4 u^n_{tt} , \partial_x^4 w^n_{tt} \right) =&~ \left(  \partial_x^4 u^n_{tt} , - w^n_{x} \partial_x^4 w^n_{tt} \right) \\ 
=&~  || \partial_x^4 u^n_{tt}  ||^2 +6 \left( \partial_x^4 u^n_{tt},  w^n_{xxt}w^n_{xxxt}\right) +2 \left( \partial_x^4 u^n_{tt},  w^n_{xt} \partial_x^4 w^n_{t}\right)  + \left(\partial_x^4 u^n_{tt},   \partial_x^4 w^n w^n_{xtt}\right)\\ &+3 \left( \partial_x^4 u^n_{tt}, w^n_{xxx}w^n_{xxtt}\right) +3 \left( \partial_x^4 u^n_{tt}, w^n_{xx}w^n_{xxxtt} \right).
\end{align*}

Combining the terms above,  we can extract {\small $||\partial_x^4 w^n_{tt}||^2$} and {\small $||\partial_x^4 u^n_{tt}||^2$} on the LHS. We group the RHS terms into different categories based on the actions that are necessary to control them. 
 \text{{\bf{\em  Type 1}} is first:}
\begin{align*}
T_1 \equiv \left(  p,  \partial_x^4 w^n_{tt} \right) - D \left(  \partial^8_x w^n, \partial_x^4 w^n_{tt} \right) - k_2 \left(  \partial^8_x w_t^n, \partial_x^4 w^n_{tt} \right)+D \left(  \partial^5_x\left[(w^n_{xx})^2w^n_x \right] , \partial_x^4 w^n_{tt} \right)  -D \left(  \partial^6_{x}\left[w^n_{xx}(w^n_x)^2\right], \partial_x^4 w^n_{tt} \right)\\ - \left( \partial_x^6w^n \int_x^L u^n_{tt},\partial_x^4 w^n_{tt} \right)  + 5 \left( \partial_x^5w^n  u^n_{tt}, \partial_x^4 w^n_{tt} \right) -6 \left( \partial_x^4 u^n_{tt},  w^n_{xxt}w^n_{xxxt}\right)  -2 \left( \partial_x^4 u^n_{tt},  w^n_{xt} \partial_x^4 w^n_{t}\right) ,
\end{align*}
where for these terms, it is clear that 
\begin{equation}
\label{Type1}
|T_1| \leq h_2\left( p, \partial_x^i w^n, \partial_x^j w^n_{t}, u^n_{tt} \right) + \varepsilon_1 ||\partial_x^4 w^n_{tt}||^2 + \delta_1 ||\partial_x^4 u^n_{tt}||^2, ~~i,j=1,2, \ldots, 8,
\end{equation}
where $h_2$ depends on $\varepsilon_1, \delta_1$ and is polynomial in its slots.
\noindent \text{{\bf{\em  Type 2}} is next:}
\begin{align}
\label{Type1.5}
T_2 \equiv 10 \left( \partial_x^4 w^n  u^n_{xtt},\partial_x^4 w^n_{tt} \right) + 10 \left( w^n_{xxx}  u^n_{xxtt},\partial_x^4 w^n_{tt} \right) + 5 \left( w^n_{xx}  u^n_{xxxtt},\partial_x^4 w^n_{tt} \right).
\end{align}

For this category we will exploit the fact that {\small $||\partial_x^4 u^n_{tt}||^2$} appears in the LHS and that {\small $\left\{ u^n_{tt} (0) \right\}_{n=1}^{\infty}$} is bounded in $L^2(0,L)$ as shown in \eqref{uttbound} which will be used in interpolation for the terms $\partial_x^i u_{tt},~i=1,2,3.$
We show how to control one of the terms appearing in \eqref{Type1.5}.
\begin{align*}
 \left| \left( w^n_{xx}  u^n_{xxxtt},\partial_x^4 w^n_{tt} \right) \right| \leq  C_{\varepsilon} ||w^n_{xxx}||^2||u^n_{xxxtt}||^2 + \varepsilon ||\partial_x^4 w^n_{tt} ||^2 
 \leq C_{\varepsilon} ||w^n_{xxx}||^{10} + C_{\varepsilon}  || u^n_{xxxtt}||^{5/2}  + \varepsilon ||\partial_x^4 w^n_{tt} ||^2,
\end{align*}
where we used Young's inequality with $p = 5$ and $q=5/4$. Then we use interpolation for $|| u^n_{xxxtt}||^{5/2}$:
\begin{align*}
|| u^n_{xxxtt}||^{5/2} \leq ||u^n_{tt}||^{5/8} ||\partial_x^4 u^n_{tt}||^{15/8} \leq C_{\varepsilon_p}||u^n_{tt}||^{10} + \varepsilon_p||\partial_x^4 u^n_{tt}||^{2},
\end{align*}
where employed Young's inequality once again with $p=16$ and $q=16/15$.

\begin{remark}
We can see from the explicit expression of $\partial_x^i u^n_{tt}$, $i=0,1,2,3$, that $$u^n_{tt}(0) = u^n_{xtt}(0)=u^n_{xxtt}(L)=u^n_{xxxtt}(L)=0.$$ Hence, Poincar\'{e}'s Inequality guarantees that $||u^n_{tt}||_{i} \sim ||\partial_x^i u^n _{tt}||$ for $i=1,2,3$.
\end{remark}
\noindent The remaining {\bf \em Type 2} are bounded analogously, yielding:
\begin{equation}
\label{Type2}
 \left| T_2 \right| \leq h_3 \left( \partial_x^i w^n, u^n_{tt} \right) + \varepsilon_2 ||\partial_x^4 w^n_{tt}||^2 + \delta_2 ||\partial_x^4 u^n_{tt}||^2, ~~i=1,2, \ldots, 5.
\end{equation}
\vskip.1cm 

Finally, we have \text{{\bf{\em  Type 3}}:} 
\begin{align*}
T_3 \equiv - \left(\partial_x^4 u^n_{tt},   \partial_x^4 w^n w^n_{xtt}\right)-3 \left( \partial_x^4 u^n_{tt}, w^n_{xxx}w^n_{xxtt}\right) -3 \left( \partial_x^4 u^n_{tt}, w^n_{xx}w^n_{xxxtt} \right).
\end{align*}
For this category, we interpolate the terms $\partial_x^i w_{tt},~ i=1,2,3,$ exploiting the fact that $\left\{ w^n_{tt} (0) \right\}_{n=1}^{\infty}$ is bounded in $L^2(0,L)$ as shown in \eqref{uttbound}. We omit these details, as the calculations are identical to those described for {\bf \em Type 2}. We obtain the bound:
\begin{equation}
\label{Type3}
 \left| T_3 \right| \leq h_4 \left( \partial_x^i w^n, w^n_{tt} \right) + \varepsilon_3 ||\partial_x^4 w^n_{tt}||^2 + \delta_3 ||\partial_x^4 u^n_{tt}||^2, ~~i=1,2, \ldots, 5.
\end{equation}

Combining \eqref{Type1}, \eqref{Type2} and \eqref{Type3}, absorbing with $\varepsilon_k,~\delta_k$ small, and taking the (valid on approximants) time trace at $t=0$, we produce the following estimate: 
\begin{equation*}
||\partial_x^4 w^n_{tt}(0)||^2 + ||\partial_x^4 u^n_{tt}(0)||^2 \leq  h \left( p(0), \partial_x^i w^n(0), \partial_x^j w^n_{t}(0), w^n_{tt}(0) , u^n_{tt}(0) \right), ~~i,j=1,2, \ldots, 8.
\end{equation*} 

By combining \eqref{InitialData2} and \eqref{uttbound}, we can {\em finally} write \eqref{threeder} as:
\begin{equation}
\label{utttbound}
||w_{ttt}^n(0)||^2+||u_{ttt}^n(0)||^2 \le C\left( p(0), p_t(0), ||w_0||_{\mathcal D(\mathcal A^2)}, ||w_1||_{\mathcal D(\mathcal A^2)}\right). 
\end{equation}

\vskip.1cm\noindent
{\bf Step 7 - Energy Level 3:} 
With the initial jerk bounded, we  proceed with the higher energy estimate corresponding to two time differentiations of the equation. The formal identity (applying $\partial_t^2$ to \eqref{dowellnon*} and multiplying by $w_{ttt}$) is:
\begin{align*}
 \frac{1}{2} \frac{d}{dt}& \big [ ||w_{ttt}||^2 +D ||w_{xxtt}||^2 +D||w_x w_{xxtt}||^2 +D||w_{xx}w_{xtt}||^2 \big ]  + k_2||w_{xxttt}||^2  \\
 = &~ D(w_{x} w_{xt},w_{xxtt}^2) + D(w_{xx} w_{xxt},w^2_{xtt}) - 4D(w_{xx}w_{xxt} w_{xt} ,w_{xttt}) - 2D(w_{x} w^2_{xxt} ,w_{xttt})\\ 
 &   - 2D(w_x w_{xx} w_{xxtt},w_{xttt})- 4D(w_{xxt}w_{x}w_{xt},w_{xxttt})  -2D(w_{xx} w^2_{xt}, w_{xxttt})-2D(w_{xx}w_{x}w_{xtt},w_{xxttt}) \\ 
 &- \left( \partial_{xtt} \left [ w_{x} \int_x^L u_{tt} \right ] , w_{ttt} \right).
\end{align*}

We bound the RHS, in line with previous sections, using the Sobolev embeddings and Young's; the estimates from stiffness terms are straightforward. 
Inertia is handled as in previous estimates. After two temporal differentiation we have:
\begin{align*}
\left( \partial_{xtt} \left [ w_{x} \int_x^L u_{tt} \right ] , w_{ttt} \right) = & - \left( \partial_{tt} \left [ w_{x} \int_x^L u_{tt} \right ] , w_{xttt} \right) \\
= & -  \left( w_{xtt} \int_x^L u_{tt}  , w_{xttt} \right) -  2 \left( w_{xt} \int_x^L u_{ttt}  , w_{xttt} \right) -  \left( w_{x} \int_x^L u_{tttt}  , w_{xttt} \right) \\
\equiv & ~~~~~~~~~~~~~~~~ \mathcal{L}_1 ~~~~~~~~~~~+~~~~~~~~~~~~~~ \mathcal{L}_2 ~~~~~~~~~~~+~~~~~~~~~~~~~\mathcal{L}_3.
\end{align*}
We bound $\mathcal{L}_1$ and $\mathcal{L}_2$ as:
\begin{enumerate}
\item
$
\!
\begin{aligned}[t]
 \left| \mathcal{L}_1 \right| \lesssim &~ ||w_{xtt}||_{L^{\infty}}||u_{tt}||~||w_{xttt}||
\leq  C_{\varepsilon_7} ||u_{tt}||^4 +C_{\varepsilon_7} ||w_{xxtt}||^4 + \varepsilon_7 ||w_{xxttt}||^2
\end{aligned}
$ 
\item
$
\!
\begin{aligned}[t]
\left| \mathcal{L}_2 \right| \lesssim &~ ||w_{xt}||_{L^{\infty}}||u_{ttt}||~||w_{xttt}||
\leq  C_{\varepsilon_8} ||w_{xxt}||^4 + C_{\varepsilon_8} ||u_{ttt}||^4  + \varepsilon_8 ||w_{xxttt}||^2.
\end{aligned}
$ 
\end{enumerate}
The term $\mathcal{L}_3$ creates the desired conserved quantity (again using the explicit representation of $u_{ttt}$): 
\begin{align*}
\mathcal{L}_3 = \left( u_{tttt}, u_{ttt} \right) + 3  \left(  u_{tttt},\int_0^x w_{xt} w_{xtt} \right ) = \frac{1}{2} \frac{d}{dt} ||u_{ttt}||^2 + 3  \left( \int_x^L u_{tttt}, w_{xt}w_{xtt} \right ).
\end{align*}
The additional term that was produced above can be manipulated as follows:
\begin{align*}
\left(  u_{tttt},\int_0^x w_{xt} w_{xtt} \right )= & \frac{d}{dt} \left( u_{ttt}, \int_0^x w_{xt}w_{xtt} \right) - \left( u_{ttt}, \int_0^x w^2_{xtt} \right) - \left( u_{ttt}, \int_0^x w_{xt}w_{xttt} \right) \\
 \equiv& ~~~~~~~~~~~~~\frac{d \mathcal{M}_1}{dt}  ~~~~~~~~~+~~~~~~~~~ \mathcal{M}_2 ~~~~~~+~~~~~~~~~~~~\mathcal{M}_3. 
\end{align*}

Now, $\mathcal{M}_2$ and $\mathcal{M}_3$ will be moved to the right hand side and estimated as follows:
\begin{enumerate}
\item
$
\!
\begin{aligned}[t]
 \left| \mathcal{M}_2 \right| \lesssim ||u_{ttt}||^2 + ||w^2_{xtt}||^2 \lesssim  ||u_{ttt}||^2+||w_{xtt}||^2_{L^{\infty}}||w_{xtt}||^2 
 \lesssim & ~||u_{ttt}||^2+||w_{xxtt}||^4
\end{aligned}
$ 
\item
$
\!
\begin{aligned}[t]
 \left| \mathcal{M}_3 \right| = \left| \left( w_{xt} \int_x^L u_{ttt}, w_{xttt} \right) \right| \leq ||w_{xt}||_{L^{\infty}}||u_{ttt}||~||w_{xttt}|| \
 \leq &~ C_{\varepsilon_9}||w_{xxt}||^4 + C_{\varepsilon_9} ||u_{ttt}||^4 + \varepsilon_9 ||w_{xxttt}||^2.
\end{aligned}
$ 
\end{enumerate}

The $\mathcal{M}_1$ is more delicate, since it must be absorbed by conservative quantities:
\begin{align*}
\label{Cone}
 \left| \mathcal{M}_1 \right| \leq   \varepsilon||u_{ttt}||^2 +C_{\varepsilon}||w_{xtt}||^2_{L^{\infty}}||w_{xt}||^2 & \leq \varepsilon||u_{ttt}||^2 +C_{\varepsilon}||w_{xtt}||^{9/4}_{L^{\infty}} +C_{\varepsilon} ||w_{xt}||^{18} \\
 & \leq \varepsilon||u_{ttt}||^2 +C_{\varepsilon, \varepsilon_p}||w_{tt}||^{18} +C_{\varepsilon} \varepsilon_p ||w_{xxtt}||^{2}+C_{\varepsilon} ||w_{xt}||^{18},
\end{align*}
accomplished as in \eqref{InterpolationTHISONE}.

Moving on, we compile the above calculations into an energy estimate, taking
\begin{equation*}
E_3(t) =  \frac{1}{2}\left [ ||w_{ttt}||^2 +D ||w_{xxtt}||^2 +D||w_x w_{xxtt}||^2 +D||w_{xx}w_{xtt}||^2 \right ] ~~~~\text{and} ~~~~ I_3(t) = \frac{1}{2} ||u_{ttt}||^2, 
\end{equation*}
and  subsequently ~ ${\small \ds \mathcal{E}_3(t) = E_3(t) + I_3(t).}$
Thus, by invoking \eqref{InitialData2} and \eqref{utttbound} to guarantee the uniform boundedness of $\cE^n_3(0)$, we obtain: 
\begin{align*}
\mathcal{E}^n_3(t) + k_2 \int_0^t ||w^n_{xxttt}||d \tau \leq g_5 \left(p,p_{t}, p_{tt}, ||w_0||_{\mathcal D(\mathcal A^2)}, ||w_1||_{\mathcal D(\mathcal A^2)} \right) + g_6\left(p,||w_0||_{\mathcal D(\mathcal A)},||w_1||_{H^2_{*}} , M_2^* \right)t \\
 +  \sum_{j=1}^{9} \varepsilon_{j} \int_0^t ||w^n_{xxttt}||^2 d\tau + C \int_0^t \left [ \mathcal{E}^n_3(\tau)\right]^2 d\tau~~~ \text{for all } t \in [0, T],
\end{align*}
where $T<T_1^*$ and $\ds M_2^*(T)$ are  as in \eqref{NonlinGronwFULLMODEL}. In addition, $C>0$ {\em does not depend} on $w_0, w_1$ or $p$.
Absorbing the damping terms, we finally obtain through another application nonlinear Gr\"onwall:
\begin{equation}
\label{GronwallforWttt}
\mathcal{E}^n_3(t) \leq  \frac{ g_5  + g_6t }{1- C \left[g_5 t + g_6 t^2 \right]} ~~0 \leq t < T_2^* ~~\text{where}~~ T_2^* = \min_t \left( \sup_{t} \left \{ C \left[g_3 t + g_4 t^2 \right] <1 \right \} , T_1^* \right) . 
\end{equation}
As before, this yields a uniform-in-$n$ a priori bound on solutions in the topology corresponding to $\cE_3$ on any $[0,T]$ for $T<T_2^*$. {We remark once again that the regularity of $p$ considered in theorem \eqref{withiota} is necessary for ensuring that the functions $g_1, g_2, \ldots, g_6$ are continuous functions in time, as required by the version of the  Gr\"onwall lemma we employ.}

\vspace{2.5cm}
\vskip.1cm \noindent{\bf Step 8 - Sufficient Regularity for $w_{t}$:} 

Regularity for the damping (with smooth data) proceeds standardly, through the equation:
\begin{align*}
|| \partial^4_{x}w^n_t|| 
\lesssim &~ ||p|| + ||w^n_{tt}|| +  ||w^n_{xxx}||~||\partial_x^4w^n||^2+||w^n_{xx}||~||w^n_{xxx}||^2+\left( 1 + ||w^n_{xx}||^2 \right) ||\partial_x^4 w^n|| + ||w^n_{xx}||~||u^n_{tt}||.
\end{align*}
Using \eqref{NonlinGronwFULLMODEL} we can deduce that \begin{equation}
\label{boundofwt}
||\partial^4_{x}w^n_t|| \text{ is bounded in } L^{\infty}(0,T; L^2(0,L)),
\end{equation} for any $T<T_2^*$.
Thus, combining \eqref{NonlinGronwFULLMODEL} and \eqref{GronwallforWttt} and \eqref{boundofwt}, we can finally obtain  a priori bounds: 
\begin{equation}
\label{finalreg2}
||w^n||_{L^{\infty}(0,T;\mathcal D(\mathcal A))}+||w^n_t||_{L^{\infty}(0,T;\mathcal D(\mathcal A))}+||w^n_{tt}||_{L^{\infty}(0,T;H^2_{*})} \le C(\text{data},T),
\end{equation}
(among other controlled norms), where ``data" indicates dependence on ~$(w_0,w_1)$ measured in norms up to that of $\cD(\cA^2)^2$.
\vskip.1cm\noindent
{\bf Step 9 - Limit Passage and Weak Solution:} With our a priori bounds in hand for smooth data $w_0 \in \mathcal D(\mathcal A^2),~w_1 \in \mathcal D(\mathcal A^2)$, we proceed to pass with the limit and construct a weak solution satisfying \eqref{weakform} with $\sigma = \iota = 1$ and $ k_2 >0$.
The boundedness of the terms in \eqref{finalreg2} yields to the existence of a subsequence $\left\{ w^{n_{k}}\right\}_{k=1}^{\infty}$ and a limit point $w \in H^1 \left(0, T; \mathcal D(\mathcal A) \right) \cap  H^2 \left(0, T; H^2_{*} \right)$, such that
\begin{equation*}
 w^{n_k} \rightharpoonup w \in L^2 \left(0, T; \mathcal D(\mathcal A) \right);~~~~
w^{n_k}_{t} \rightharpoonup w_{t} \in L^2 \left(0, T; \mathcal D(\mathcal A) \right);~~~~
w^{n_k}_{tt} \rightharpoonup w_{tt} \in L^2 \left(0, T; H^2_{*}  \right) .
\end{equation*}

We must show that $w$ satisfies the weak form \eqref{weakform}, in this case with $\sigma = \iota = 1$ and $k_2 >0$. The details corresponding to limit point identification for {\bf [NL Stiffness]} are identical to those in {\bf Step 6} of Section \ref{SectionProofOfStiffness}, thus we focus on {\bf [NL Inertia]} terms.

We first show that $u^{n_k}_{tt} \to u_{tt}$ in $L^2 \left(0, T; L^2(0,L) \right)$. To that end, we consider the differences:
\begin{align*}
||u_{tt}^{n_{k}} - u_{tt}|| \leq \left| \left| \int_x^L \left(  [w^{n_k}_{xt}]^2 - w_{xt}^2 \right)  \right| \right|^2 + \left| \left| \int_x^L \left( w^{n_k}_{x}w^{n_k}_{xtt} - w_{x}w_{xtt} \right)  \right| \right|^2  \equiv ~ \mathcal{Y}_1 + \mathcal{Y}_2.
\end{align*} 
We will show that both $\mathcal{Y}_1$ and $\mathcal{Y}_2$ go to zero as $k\to \infty$.
\begin{align*}
\mathcal{Y}_1 \lesssim ||w^{n_k}_{xt} + w_{xt}||^2_{L^{\infty}} ||w^{n_k}_{xt} - w_{xt}||^2 \lesssim  ||w_{xxt}||^2 ||w^{n_k}_{xt} - w_{xt}||^2 
 ~~\to 0 ~~\text{as}~~ k \to \infty.
\end{align*}
\begin{align*}
\mathcal{Y}_2 \leq ||w^{n_k}_{x} (w^{n_k}_{xtt} - w_{xtt})||^2 + ||w_{xtt} (w^{n_k}_{x} - w_{x})||^2 & \leq ||w^{n_k}_{x}||^2_{L^\infty} ||w^{n_k}_{xtt} - w_{xtt}||^2 + ||w_{xtt}||^2_{L^\infty}  ||w^{n_k}_{x} - w_{x}||^2 \\
& \lesssim ||w_{xx}||^2 ||w^{n_k}_{xtt} - w_{xtt}||^2 + ||w_{xxtt}||^2  ||w^{n_k}_{x} - w_{x}||^2 \\
& ~~~~\to 0 ~~\text{as}~~ k \to \infty.
\end{align*}

Now, in order to pass to the limit for the {\bf [NL Inertia]} term we need to show that 
\begin{equation*}
\left( w^{n_k}_{x} \int_x^L u^{n_k}_{tt} , \phi_x \right) \to \left( w_{x} \int_x^L u_{tt} , \phi_x \right)~~~~\text{for all } \phi \in H^2_{*}.
\end{equation*}
\begin{align*}
\left| \left( w^{n_k}_{x} \int_x^L u^{n_k}_{tt} - w_{x} \int_x^L u_{tt}, \phi_x \right) \right| & \leq \left| \left( w^{n_k}_{x} \int_x^L [ u^{n_k}_{tt} - u_{tt}], \phi_x \right) + \left(  ( w^{n_k}_{x} - w_{x} )\int_x^L u_{tt}, \phi_x \right) \right|\\
& \leq ||\phi_x||_{L^\infty}||w^{n_k}_{x}||~||u^{n_k}_{tt} - u_{tt}|| + ||\phi_x||_{L^\infty}||u_{tt}||~|| w^{n_k}_{x} - w_{x}|| \\
& \leq ||\phi_x||_{L^\infty}||w_{x}||~||u^{n_k}_{tt} - u_{tt}|| + ||\phi_x||_{L^\infty}||u_{tt}||~|| w^{n_k}_{x} - w_{x}||\\
& ~~~\to 0 ~~\text{as}~~ k \to \infty.
\end{align*}

Hence, $w$ satisfies the weak formulation \eqref{weakform} with  $\sigma = \iota = 1$ and $ k_2 >0$.
With a weak solution $w(x,t)$ in hand corresponding to smooth data, we have by Definition \ref{strongsol2} that the solution is strong, via the estimate \eqref{finalreg2}  that provides the necessary regularity for $w,~w_t,~w_{tt}$.

And thus we have proven theorem \ref{withiota}.
\begin{corollary} \label{StrongInertia} Strong solutions $w$, described in Definition \ref{strongsol2}, satisfy equation \eqref{dowellnon*} with $\sigma = \iota = 1$ and $ k_2 >0$ in the sense of $L^2(0,T;L^2(0,L))$. Additionally, they satisfy $w_{xx}(L,t)=w_{xxx}(L,t)=0$ for all $0 \leq t \leq T$.
\end{corollary}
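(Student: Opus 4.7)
The plan is to mirror the strategy of Corollary \ref{strongisstrong}, with additional care required for the Kelvin--Voigt damping and the nonlocal, implicit {\bf [NL Inertia]} term. The three stages I would carry out are: (i) verify that every term appearing in the pointwise PDE lies in $L^2(0,T;L^2(0,L))$, (ii) test the weak form \eqref{weakform} against $\phi \in C_0^\infty(0,L)$ and undo the integration by parts, then extend by density to recover the PDE a.e., and (iii) select test functions $\phi \in H^2_*$ with nontrivial trace at $x=L$ to extract the free boundary conditions via the surjectivity of the trace map.

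First I would establish the pointwise regularity of every term. From Definition \ref{strongsol2} we have $w, w_t \in L^2(0,T;\cD(\cA))$ and $w_{tt} \in L^2(0,T;H^2_*)$, so by 1-D Sobolev embeddings the quantities $w_x, w_{xx}, w_{xxx}, w_{xt}, w_{xxt}$ are bounded in space--time and $w_{xtt} \in L^2(0,T;L^\infty(0,L))$. The damping term $k_2 \partial_x^4 w_t$ is directly in $L^2(0,T;L^2(0,L))$. The {\bf [NL Stiffness]} contributions are controlled exactly as in Corollary \ref{strongisstrong}. For {\bf [NL Inertia]}, I would use the expansion \eqref{NLInertia} to write it as $-w_x u_{tt} + w_{xx} \int_x^L u_{tt}\, d\xi$ with $u_{tt} = -\int_0^x [w_{xt}^2 + w_x w_{xtt}]\, d\xi$; combining the above bounds shows $u_{tt} \in L^2(0,T;L^\infty(0,L))$, and hence the entire inertia expression is in $L^2(0,T;L^2(0,L))$.

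With every term integrable, I would test \eqref{weakform} against $\phi \in C_0^\infty(0,L)$, eliminating all boundary contributions, and undo the integration by parts to obtain \eqref{dowellnon*} pointwise a.e. in $x$ and $t$; density of $C_0^\infty$ in $L^2$ extends the identity to test against all of $L^2$. Then, returning to the weak form with general $\phi \in H^2_*$ and subtracting the pointwise equation tested against $\phi$, only traces at $x=L$ survive. The damping boundary terms $k_2[w_{xxxt}(L)\phi(L) - w_{xxt}(L)\phi_x(L)]$ vanish because $w_t(\cdot,t) \in \cD(\cA)$ already enforces $w_{xxt}(L,t) = w_{xxxt}(L,t)=0$ a.e.\ $t$; the nonlocal inertial boundary term $\bigl[w_x \int_x^L u_{tt}\,\phi\bigr]_0^L$ vanishes because the inner integral is zero at $x=L$ and $\phi(0)=0$. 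The residual identity is precisely
\begin{equation*}
\phi_x(L)\bigl(1+w_x^2(L)\bigr)w_{xx}(L) - \phi(L)\Bigl[\bigl(1+w_x^2(L)\bigr)w_{xxx}(L) + w_x(L)w_{xx}^2(L)\Bigr] = 0,
\end{equation*}
holding for all $\phi \in H^2_*$ and a.e.\ $t$. The trace-surjectivity argument of Corollary \ref{strongisstrong}---first restricting to $\phi \in H_0^1 \cap H^2_*$ with $\phi_x(L)\neq 0$ to isolate $w_{xx}(L,t)=0$, then picking $\phi$ with $\phi(L)\neq 0$ to conclude $w_{xxx}(L,t)=0$---then delivers the free boundary conditions.

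The main obstacle, relative to Corollary \ref{strongisstrong}, is the rigorous justification that the implicit and nonlocal {\bf [NL Inertia]} term is classically defined and lies in $L^2$ of space--time, together with checking that the additional boundary contributions introduced by the Kelvin--Voigt damping evaporate. This is exactly where the higher regularity $w_t \in L^2(0,T;\cD(\cA))$ and $w_{tt} \in L^2(0,T;H^2_*)$---secured only through the damping and the higher energy levels $\cE_2$ and $\cE_3$ developed in the existence argument---is indispensable; without that regularity, neither the interior $L^2$ estimate nor the vanishing of the damping traces at $x=L$ can be asserted.
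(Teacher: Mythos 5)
Your first two stages (interior $L^2$ regularity of every term, testing against $C_0^\infty$ and extending by density) match the paper's proof. The divergence---and the gap---is in the boundary-condition step. You argue that the Kelvin--Voigt boundary traces $w_{xxt}(L,t)$ and $w_{xxxt}(L,t)$ vanish a.e.\ $t$ because $w_t(\cdot,t)\in\cD(\cA)$ a.e.\ $t$, so that the residual boundary identity collapses to exactly the one appearing in Corollary \ref{strongisstrong}. But if that appeal to $\cD(\cA)$-membership (with its built-in free boundary conditions) is available for $w_t$, it is equally available for $w$ itself, and then $w_{xx}(L,t)=w_{xxx}(L,t)=0$ a.e.\ $t$ is immediate without any boundary-identity argument, trivializing the corollary. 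Conversely, if one reads $\cD(\cA)$ in Definition \ref{strongsol2} as a regularity class whose free-end conditions are precisely what the corollary is meant to \emph{verify}, then you cannot invoke $w_{xxt}(L,t)=0$, and your derivation of the residual identity fails. Either way, the step as written does not stand on its own.

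The paper does something genuinely different here: it keeps the damping traces inside the boundary identity \eqref{boundaryful} and treats $w_{xxt}(L,t)$ as the distributional time derivative of the continuous scalar function $t\mapsto w_{xx}(L,t)$ (continuity coming from $w\in H^1(0,T;H^4)\hookrightarrow C([0,T];H^4)$ and the trace theorem). The trace-surjectivity argument then produces the linear ODE
\begin{equation*}
\partial_t\bigl[w_{xx}(L,t)\bigr] + \bigl(1+w_x^2(L,t)\bigr)\,w_{xx}(L,t) = 0,
\end{equation*}
whose solution is $w_{xx}(L,t)=w_{xx}(L,0)\exp\bigl(-\int_0^t(1+w_x^2(L,s))\,ds\bigr)$; the initial condition $w_0\in\cD(\cA)$ gives $w_{xx}(L,0)=0$ and hence $w_{xx}(L,t)\equiv 0$, and an identical ODE then kills $w_{xxx}(L,t)$. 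This shows that the Kelvin--Voigt damping \emph{propagates} the free boundary conditions from the initial data rather than merely inheriting them from a definitional stipulation, and it is precisely the idea your proposal is missing.
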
 

\begin{proof}[Proof of Corollary \ref{StrongInertia}] The weak form is now satisfied by the constructed limit:
\begin{align}
\label{weakform4}
(w_{tt}, \phi)+D(w_{xx},\phi_{xx}) + k_2(w_{xxt},\phi_{xx}) + D(w^2_{xx}w_x, \phi_x) + D(w_x^2 w_{xx}, \phi_{xx}) - & \left( w_{x} \int_x^L u_{tt}, \phi_{x} \right) = (p, \phi), \nonumber \\
& ~~~\forall \phi \in H^2_{*},~~a.e.~t.
\end{align}

Reversing integration by parts, yields (on test functions):
\begin{equation*}
\left( w_{tt} + D \partial^4_{x}w +k_2 \partial^4_{x}w_t  -D\partial_x\big[w_{xx}^2w_x \big]+D\partial_{xx}\big[w_{xx}w_x^2\big] + \partial_x \left [w_{x} \int_x^L u_{tt} \right] - p, \phi \right)=0, ~~~\forall \phi \in C_0^{\infty}(0,L).
 \end{equation*} 
By density, we have the equation holding in $L^2(0,L)$, as desired:
\begin{equation}\label{dumdum*}
w_{tt} + D \partial^4_{x}w +k_2 \partial^4_{x}w_t -D\partial_x\big[w_{xx}^2w_x \big]+D\partial_{xx}\big[w_{xx}w_x^2\big] + \partial_x \left [ w_{x} \int_x^L u_{tt} \right] = p ~~~a.e.~x,~~ a.e. ~t.
 \end{equation}

The solution resides in $H_*^2$, but we must show the natural boundary conditions $w_{xx}(L,t) = w_{xxx}(L,t)=0$. 
The argument proceeds as before, invoking \eqref{dumdum*} and yielding, upon integration by parts: 
\begin{align}
\label{boundaryful}
\phi_x(L) \left( (1+w_x^2(L))w_{xx}(L) +w_{xxt}(L) \right) - \phi(L)  \big((1+w_x^2(L))w_{xxx}(L) +  w_{x}(L)& w_{xx}^2(L)  +w_{xxxt}(L) \big)=0, \nonumber \\ & \forall \phi \in H^2_*,
\end{align}
where we interpret the time derivatives above distributionally. 
Considering $\phi \in H_0^1\cap H^2_* \subseteq H^2_*$, we see ~$
\phi_x(L) \left( (1+w_x^2(L))w_{xx}(L) +w_{xxt}(L) \right)=0.$
 There exists one such function so that $\phi_x(L) \neq 0$, and thus 
 \begin{equation*}
 w_{xxt}(L) +\left(1 + w_x^2(L)\right) w_{xx}(L)=0.
 \end{equation*}
 
 Now, since $w \in H^1(0,T; \cD(\cA))$ for smooth solutions, we have $w \in C([0,T];\cD(\cA))$. Hence $w_{xx}(L,t)$, $w_{xxx}(L,t)$ are continuous functions of time, so we have a linear ODE of the form $f'(t) + g(t)f(t)=0$, with classical solution
 \begin{equation*}
 w_{xx}(L, t) = w_{xx}(L,0) e ^{- \int_0^t (1+w_{x}^2(L,s))ds}.
 \end{equation*}
As $w_0 \in \mathcal D(\cA)$, $w_{xx}(L,0)=0$ and thus $w_{xx}(L,t)=0$ for all $t \in (0,T)$.
  
The same argument now applies for $\phi \in H^2_*$, yielding  $$w_{xxxt}(L) +\left(1 + w_x^2(L)\right) w_{xxx}(L)=0,$$
from which we deduce that $w_{xxx}(L,t)=0$ for all $t \in (0,T)$.
\end{proof}

\subsubsection{Uniqueness and Continuous Dependence}
Consider $w$ and $v$ to be two strong solutions of \eqref{dowellnon*} with  $\sigma = \iota = 1$ and $ k_2 >0$ and let $z \equiv w-v$. Using the multiplier $z_t$ on \eqref{dowellnon*} we obtain:
\begin{align}
\frac{1}{2} \frac{d}{dt} \Big [ ||z_t||^2 + D ||z_{xx}||^2 + D ||w_{x}z_{xx}||^2 +D||w_{xx}z_{x}||^2 \Big ] +k_2 ||z_{xxt}||^2 + \left(  \partial_x\left[w_x\int_x^L \overline{u}_{tt} - v_x\int_x^L \hat{u}_{tt} \right] , z_{t} \right) \nonumber \\  =  D \left( w_x w_{xt}, z^2_{xx} \right) -  D\left(v_{xx} \left [ w_x  + v_x \right ],z_{x} z_{xxt} \right) + D\left( w_{xx}w_{xxt}, z^2_{x} \right)-D \left(v_x \left [ w_{xx}  + v_{xx} \right ] , z_{xx}z_{xt} \right),
\end{align}
where
$$\overline{u}_{tt}(x)=-\int_0^x \left [w_{xt}^2+w_xw_{xtt} \right ]d\xi \hspace{5mm} \text{and} \hspace{5mm} \hat{u}_{tt}(x)=-\int_0^x \left [v_{xt}^2+v_xv_{xtt} \right ]d\xi.$$

The presence of strong damping allows us to estimate the RHS in a straightforward manner (without the subtlety needed in Section \ref{SectionUniq}):
\begin{enumerate}
\item  $ D  \left| \left( w_x w_{xt}, z^2_{xx} \right) \right| \lesssim ||w_{xx}||~||w_{xxt}||~||z_{xx}||^2$
\item
$
\!
\begin{aligned}[t]
 D  \left| \left(v_{xx} \left [ w_x  + v_x \right ],z_{x} z_{xxt} \right) \right|  \leq  C_{\varepsilon_1} ||v_{xx}||^2  ||w_{xx}+ v_{xx}|| ^2 ||z_{xx}||^2 + \varepsilon_1 ||z_{xxt}||^2
\end{aligned}
$ 
\item
$
\!
\begin{aligned}[t]
D  \left| \left( w_{xx}w_{xxt}, z^2_{x} \right) \right| \lesssim ||w_{xx}||~||w_{xxt}||~||z_{xx}||^2
\end{aligned}
$ 
\item
$
\!
\begin{aligned}[t]
 D  \left| \left(v_x \left [ w_{xx}  + v_{xx} \right ] , z_{xx}z_{xt} \right) \right| \leq C_{\varepsilon_2} ||v_{xx}||^2||w_{xxx} + v_{xxx}||^2||z_{xx}||^2 + \varepsilon_2||z_{xxt}||^2.
\end{aligned}
$ 
\end{enumerate}

For the inertial term, we have:
\begin{align*}
 \left(  \partial_x\left[w_x\int_x^L \overline{u}_{tt} - v_x\int_x^L \hat{u}_{tt} \right] , z_{t} \right) = & -\left(  (w_x - v_{x})\int_x^L \overline{u}_{tt}  , z_{xt} \right) - \left(  v_x \int_x^L \left [ \overline{u}_{tt} -  \hat{u}_{tt} \right ] , z_{xt} \right) \\
\equiv &~~~~~~~~~~~~~~~~~~ \mathcal{N} ~~~~~~~~~~~~~~+~~~~~~~~~~~~~~~~~ \mathcal{O}.
\end{align*}
Firstly:
\begin{align*}
 \left| \mathcal{N} \right|  =    \left| \left( \overline{u}_{tt}  , \int_0^x z_{x} z_{xt} \right) \right| 
 \lesssim ||z_{xt}||_{L^{\infty}}  || \overline{u}_{tt} || ~ ||z_{x}||
\leq  \varepsilon_3 ||z_{xxt}||^2 + c_{\varepsilon_3} ||\overline{u}_{tt}||^2||z_{xx}||^2.
\end{align*}
The second term $\mathcal{O}$ yields:
\begin{align*}
\mathcal{O} =   - \left(  \overline{u}_{tt} -  \hat{u}_{tt}   , \int_0^x v_x  z_{xt} \right) = & \left(\int_0^x \left [w_{xt}^2 -  v_{xt}^2 \right ] , \int_0^x v_x  z_{xt} \right)  + \left( \int_0^x \left [w_xw_{xtt} - v_xv_{xtt} \right]  , \int_0^x v_x  z_{xt} \right) \\
\equiv &~~~~~~~~~~~~~~~~~~ \mathcal{O}_1 ~~~~~~~~~~~~~~~~+~~~~~~~~~~~~~~~~~~ \mathcal{O}_2,
\end{align*}
where
\begin{align*} \mathcal{O}_2=\left( \int_0^x \left [z_xw_{xtt}  +v_xz_{xtt} \right]  , \int_0^x v_x  z_{xt} \right)=&~  \frac{1}{2} \frac{d}{dt} \left | \left | \int_0^x v_{x}z_{xt} \right | \right |^2 - \left( \int_0^x  v_{xt} z_{xt}    , \int_0^x v_x  z_{xt} \right) \\
&+\left( \int_0^x z_{x}w_{xtt}, \int_0^x v_x  z_{xt} \right).\end{align*}

The conserved $d / dt$ quantity will remain on the LHS, with the rest moved to the RHS and estimated: 
\begin{align*}
 \left| \left( \int_0^x  v_{xt} z_{xt}    , \int_0^x v_x  z_{xt} \right) \right|
\leq&~ \varepsilon_4 ||v_{xt}||^2_{L^{\infty}}  || z_{xt} ||^2  +  c_{\varepsilon_4} \left | \left | \int_0^x  v_{x} z_{xt} \right | \right |^2 \\
\leq&~ \varepsilon_4 ||v_{xxt}||^2~|| z_{xxt} ||^2  + c_{\varepsilon_4} \left | \left | \int_0^x  v_{x} z_{xt} \right | \right |^2\\
 \left| \left( \int_0^x z_{x}w_{xtt}, \int_0^x v_x  z_{xt} \right) \right|
\lesssim&~  ||w_{xtt}||^2  || z_{xx} ||^2  + \left | \left | \int_0^x  v_{x} z_{xt} \right | \right |^2.
\end{align*}
\begin{remark} The above calculation demonstrates the necessity of forming an energy identity (namely the {\bf Energy Level 3} formed in the proof of {\em Theorem} \ref{withiota}) that provides higher spatial regularity for $w_{tt}$.
\end{remark}

Lastly we have:
\begin{align*}
 \left| \mathcal{O}_1 \right| \leq  \left| \left(\int_0^x \left ( w_{xt} +  v_{xt}  \right ) z_{xt} , \int_0^x v_x  z_{xt} \right) \right|
\leq \varepsilon_5 \left(  ||w_{xxt} + v_{xxt}|| \right) ^2 ||z_{xxt}||^2 + c_{\varepsilon_5} \left | \left | \int_0^x  v_{x} z_{xt} \right |  \right |^2.
\end{align*}

Defining:
\begin{equation*}
\mathcal{E}(t) = \frac{1}{2} \left[ ||z_t||^2 + D ||z_{xx}||^2 + D ||w_{x}z_{xx}||^2 +D||w_{xx}z_{x}||^2 + \left | \left | \int_0^x v_{x}z_{xt} \right | \right |^2 \right ],
\end{equation*}
combining estimates for $\mathcal N$ and $\mathcal O$, and recalling ~{\small $\overline{u}_{tt}(x)=-\int_0^x \left [w_{xt}^2+w_xw_{xtt}\right]$}, we obtain:
\begin{align}
\label{UniquenessEsti}
\mathcal{E}(t) + k_2 \int_0^t ||z_{xxt}||^2 \leq \mathcal{E}(0) + \int_0^t K(w,v) \mathcal{E}(\tau) d\tau + \sum_{i=1}^5 \varepsilon_i \int_0^t C(w,v) ||z_{xxt}||^2,
\end{align}
where the above dependencies are of the following sense:
\begin{equation*}
K(w,v)= K\left( ||w||_3^2, ||w_{t}||_2^2, ||v||_3^2, ||w_{tt}||_1^2 \right)~~~\text{and}~~~C(w,v) = \left(||w||_2^2, ||w_{t}||_2^2 \right).
\end{equation*}

The regularity of strong solutions in the inertial case with data in $\mathcal D(\mathcal A^2)^2$ (see e.g., \eqref{GronwallforWttt}) provide $w_{ttt} \in L^{\infty}(0,T;L^2(0,T))$ (with a bound in terms of the data), and with $w_{tt} \in L^{\infty}(0,T;H^2_*)$; thus we have $w_{tt} \in C([0,T];H^1(0,L))$. From this, and the energy estimate for inertial solutions (\eqref{finalreg2} with Remark \ref{neededreg}), we obtain $C([0,T])$ boundedness (for $T<T^*(\text{data}_w,\text{data}_v)$) of the quantities $C(w,v),~K(w,v)$ the individual trajectories $(w,w_t),~(v,v_t)$.  Taking $\sup_{[0,T]}$ and choosing $\varepsilon_i$ sufficiently small (depending on the data), we obtain:
$$\cE(t) \le \mathcal C_1 \cE(0)+ \mathcal C_2 \int_0^t \cE(\tau)d\tau,$$
where $t \in [0,T]$ and we have the dependencies $\ds \mathcal C_i\Big(||(w_0,w_1)||_{\mathscr H^I_s}, ||(v_0,v_1)||_{\mathscr H^I_s},||p||_{H^{2}(0,T;L^2(0,L))} \Big)$.
The standard Gr\"{o}nwall lemma yields:
\begin{equation*}
\cE(t) \le \mathcal E(0) e^{\mathcal C_1 t},~~t \in [0,T].
\end{equation*}

Uniqueness and continuous dependence follow as in Section \ref{SectionUniq} for stiffness-only dynamics, i.e., in the sense that~ {\small $||z||_2^2+||z_t||^2 \lesssim \cE(t)$}.

\section{Global Solutions for Sufficiently Small Data}\label{global}

\subsection{Precise Statement of the Theorem}

\begin{theorem}\label{th:main3}
Suppose $\iota=\sigma=1$ with $k_2>0$, and take $p \equiv 0$. Then there exists a number $Q>0$ such that if ~$||(w_0,w_1)||_{\cD(\cA^2)\times \cD(\cA^2)} \le Q$, then the corresponding strong solution $(w,w_t)$ of \eqref{dowellnon*}--\eqref{dowellnon2*} has time of existence $T^*(w_0,w_1)=+\infty$ and there exist $M,\omega>0$ depending only on $Q$ such that 
$$||(w(t),w_t(t))||_{\cD(\cA^2)\times \cD(\cA^2)}^2 \le M\exp(-\omega t).$$
\end{theorem}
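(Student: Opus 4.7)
The plan is to combine the local well-posedness from Theorem \ref{withiota} with a small-data, dissipation-driven bootstrap. I would construct a Lyapunov functional $\Phi(t)$ that is (for small trajectories) equivalent to $\|(w(t),w_t(t))\|^2_{\cD(\cA^2)\times\cD(\cA^2)}$, built from the hierarchy of energies $\mathcal{E}_0,\mathcal{E}_1,\mathcal{E}_2,\mathcal{E}_3$ already introduced in the proof of Theorem \ref{withiota}; since these track up to $w,w_t\in\cD(\cA)$, I would add one or two further levels, recovering the remaining spatial regularity by ``going back through'' \eqref{dumdum*}. To turn the Kelvin--Voigt dissipation $k_2\int_0^t\sum_\ell\|\partial_x^2\partial_t^{\ell+1}w\|^2\,d\tau$ appearing in every identity into dissipation coercive with respect to $\Phi$ itself, I would add small equipartition perturbations of the form $\varepsilon\sum_\ell(\partial_t^\ell w,\partial_t^{\ell+1}w)_{H^2_*}$.

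Assembling the identities of Section \ref{inertia} at each of these levels together with the perturbation terms should yield a differential inequality
\begin{equation*}
\tfrac{d}{dt}\Phi(t)+\omega\,\Phi(t)\le C\,\Phi(t)^{1+\beta},
\end{equation*}
for some $\beta>0$. The right-hand side comes from \textbf{[NL Stiffness]} and \textbf{[NL Inertia]} remainders; revisiting the estimates already catalogued (Sobolev $H^{1/2^+}\hookrightarrow L^\infty$, Poincar\'e, interpolation with Young exponents chosen as in Section \ref{inertia}), each remainder is polynomial of total degree $\ge 3$ in the components of $\Phi$, so one copy of $\Phi$ can be factored out and the rest bounded by $\Phi^\beta$. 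In particular, the nonlocal inertial cross-terms such as $(u_{tt},\cdot)$ and $(u_{xxt},\cdot)$ contain quadratic factors in $w$ via $u_t=-\int_0^xw_xw_{xt}\,d\xi$ and thus automatically carry $\Phi^\beta$-smallness.

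The bootstrap is then standard. Fix $Q>0$ with $CQ^\beta\le\omega/2$ and choose data with $\Phi(0)\le Q$. Let $T_\dagger\equiv\sup\{t\in[0,T^{\ast}):\Phi(s)\le Q\ \forall\,s\le t\}$. By continuity of $\Phi$, $T_\dagger>0$; on $[0,T_\dagger)$ the inequality reads $\tfrac{d}{dt}\Phi+\tfrac{\omega}{2}\Phi\le 0$, so $\Phi(t)\le\Phi(0)e^{-\omega t/2}<Q$ on all of $[0,T_\dagger)$, whence $T_\dagger=T^{\ast}$. Because $T^{\ast}$ is a decreasing function of the data norm and $\Phi(t)$ stays in a fixed ball on $[0,T^{\ast})$, restarting Theorem \ref{withiota} near $T^{\ast}$ would contradict maximality unless $T^{\ast}=+\infty$. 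The desired bound follows from $\Phi(t)\le\Phi(0)e^{-\omega t/2}$ and the equivalence $\Phi(t)\sim\|(w(t),w_t(t))\|^2_{\cD(\cA^2)\times\cD(\cA^2)}$.

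The main obstacle is the construction of $\Phi$ itself: one needs dissipation coercive with respect to a functional capturing $\cD(\cA^2)$ regularity, but Kelvin--Voigt damping directly controls only velocity norms, so extracting $\partial_x^8 w$-type control requires iterated use of the PDE and careful bookkeeping of the four free-boundary conditions at $x=L$ at every order. Equally delicate is the implicit, nonlocal \textbf{[NL Inertia]}: at the highest energy level terms like $\bigl(w_x\int_x^L u_{ttt},\cdot\bigr)$ generate cross-terms whose smallness is not manifest, and they must be reabsorbed via the symmetric polynomial structure exploited in the uniqueness argument and the initial-jerk bound \eqref{utttbound}, rather than by brute estimation.
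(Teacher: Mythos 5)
Your proposal takes essentially the same route as the paper: a hierarchy of energies at four levels, equipartition (stability) multipliers $w,w_t,\partial_x^4 w, w_{tt}$ at each level, absorption of the remainders using the superlinearity of \textbf{[NL Stiffness]} and \textbf{[NL Inertia]}, and a small-data barrier/bootstrap argument to convert local existence into global decay. The only formal difference is packaging: you cast the result as a differential inequality $\frac{d}{dt}\Phi+\omega\Phi\le C\Phi^{1+\beta}$ for a Lyapunov functional perturbed by cross-terms, whereas the paper works with an \emph{integral} inequality
\[
X(T)+C_1\int_0^T X\le C_2X(0)+C_3\sum X^{\alpha_i}(0)+C_4\sum X^{\beta_i}(T)+C_5\int_0^T\sum X^{\gamma_i}
\]
and then invokes the ready-made barrier lemma (Theorem \ref{decayingone}, taken from \cite{ig1}). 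The integral form handles the nonlinear cross-term evaluations at the endpoints (e.g., $(u_{tt}(T),\int_0^x w_{xt}^2(T))$, bounded by $X_1^2(T)$) by recording them explicitly as $X^{\beta_i}(T)$ contributions and letting the lemma absorb them; your Lyapunov form would need the functional $\Phi$ to include these nonlinear cross-terms (not just the quadratic $\varepsilon(\partial_t^\ell w,\partial_t^{\ell+1}w)_{H^2_*}$ perturbations you mention) and then argue equivalence $\Phi\sim\sum X_i$ only on the small ball, which is a bit more bookkeeping but morally identical. One thing you get right and the paper glosses over: the constructed hierarchy $X_0,\dots,X_3$ only tracks $w\in\cD(\cA)$, $w_t\in\cD(\cA)$, $w_{tt}\in H^2_*$ (after going back through \eqref{dumdum*} once, as in Step 8 of Section \ref{inertia}), so recovering the stated $\cD(\cA^2)\times\cD(\cA^2)$ decay requires at least one more pass through the equation (using the parabolic Kelvin--Voigt structure) and control of $\partial_x^4 w_{tt}$; the paper leaves this final regularity-bootstrap implicit, and your explicit flag of it is appropriate.
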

We note that the above theorem will obtain unproblematically in the case of $\iota=0$ and $k_2>0$, i.e., when nonlinear inertia is neglected and Kelvin-Voigt damping is included. On the other hand, it is clear the result should be possible with weaker damping. See the second point in Section \ref{open}.

\subsection{Outline of Proof}
We proceed as in \cite{ig1,xiang1} to obtain global existence indirectly via the Barrier method, which exploits the superlinearity in the problem. Using the damping, we will employ stabilization type multipliers at every energy level to obtain an inequality of the form in the theorem below, which we take from \cite{ig1}:
\begin{theorem}\label{decayingone}
Suppose that $X : [0,\infty) \to [0,\infty)$ is a continuous function such that there is a $T>0$ so that $X(T) <\infty$ and
\begin{equation}
\label{ExponentialDecay}
X(T) + C_1 \int_0^T X(s)ds \leq C_2X(0)+C_3 \sum_{i=1}^{N_1} X^{\alpha_i}(0) +C_4 \sum_{i=1}^{N_2}   X^{\beta_i}(T) + C_5 \sum_{i=1}^{N_3} \int_0^T  X^{\gamma_i}(s)ds,
\end{equation}
where $\alpha_i >1,~i=1,2, \ldots, N_1$, $\beta_j>1,~j=1,2,\ldots,N_2$ and $\gamma_k>1, k=1,2,\ldots, N_3$. Then there exists a $\mathscr C$ depending on $C_i, N_i, \alpha_i,\beta_i$ so that if $X(0) \leq \epsilon \le \mathscr C$, then 
$$X(t) \le \dfrac{ \epsilon}{\mathscr C} \exp\left({-\mathscr C t}\right).$$
\end{theorem}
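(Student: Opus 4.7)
The plan is to verify that a super-energy $X(t)$ equivalent to $\|(w(t),w_t(t))\|_{\cD(\cA^2)\times\cD(\cA^2)}^2$ satisfies the abstract stabilization inequality \eqref{ExponentialDecay}, so that Theorem \ref{decayingone} immediately produces the claimed exponential decay. Global existence for small data will then follow via a standard continuation/bootstrap argument: the local solution guaranteed by Theorem \ref{withiota} is extended indefinitely once we know that $X(t)$ is controlled (and in fact decays) uniformly in $t$.

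\textbf{Step 1: Choice of $X$.} I would take $X(t) \equiv \sum_{i=0}^3 \cE_i(t)$, where $\cE_i$ are precisely the functionals assembled in Section \ref{inertia} for the $\iota=\sigma=1$, $k_2>0$ case. Using the PDE \eqref{dumdum*} together with Corollary \ref{StrongInertia} and the relation \eqref{NLInertia}, one trades time derivatives of $w$ for spatial derivatives (going back through the equation), establishing $X(t) \sim \|w\|_{\cD(\cA^2)}^2 + \|w_t\|_{\cD(\cA^2)}^2$ up to nonlinear corrections of order $\ge 2$ in $X$; for small data these are absorbed.

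\textbf{Step 2: Stabilization at each level.} Unlike Section \ref{inertia}, where energy identities alone were closed through a nonlinear Gr\"onwall, here every estimate must be upgraded to produce \emph{dissipation integrated in time}. At level $\cE_0$, the multiplier $w_t$ gives the natural identity with dissipation $k_2\int_0^T\|w_{xxt}\|^2 d\tau$; combining this with an equipartition/stabilization multiplier (of the form $\epsilon w$, or the ``pseudo-velocity'' $w+\epsilon w_t$) generates coercivity of $\int_0^T \|w_{xx}\|^2 d\tau$, at the cost of boundary-in-time traces $\cE_0(0), \cE_0(T)$, and nonlinear remainders. Because {\bf [NL Stiffness]} and {\bf [NL Inertia]} are polynomial of degree $\ge 3$ in derivatives of $w$, all their contributions are strictly superlinear in $X$, fitting the $X^{\alpha_i}, X^{\beta_i}, X^{\gamma_i}$ slots of \eqref{ExponentialDecay}. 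The same two-multiplier strategy is then applied at the $\cE_1$ (one time derivative), $\cE_2$ (equation tested with $\partial_x^4 w_t$), and $\cE_3$ (two time derivatives) levels, using the damping integrals $k_2\int\|w_{xxtt}\|^2$ and $k_2\int\|\partial_x^4 w_t\|^2$ already obtained in Section \ref{inertia} to produce analogous coercivity.

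\textbf{Step 3: Assembly.} I would form $X(t)$ as a weighted sum $\sum_{i=0}^3 \mu_i \cE_i(t)$ with weights $\mu_0\gg\mu_1\gg\mu_2\gg\mu_3$ chosen so that the cross-level contamination from unsigned terms (in particular, those coming from differentiating the nonlinear inertia or from commuting $\partial_x^4$ with the quasilinear stiffness) is absorbed by the dissipation at the next higher level. The resulting inequality takes exactly the form \eqref{ExponentialDecay}, with $\alpha_i,\beta_i,\gamma_i\ge 2$ dictated by the polynomial orders in the nonlinearities. Theorem \ref{decayingone} then delivers the decay $X(t)\le (\epsilon/\mathscr C)\exp(-\mathscr C t)$ whenever $X(0)\le \epsilon \le \mathscr C$, and the choice $Q^2 \le \mathscr C$ gives the statement of Theorem \ref{th:main3}. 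Global existence follows because the blow-up alternative from Theorem \ref{withiota} ($T^*<\infty \Longrightarrow \|(w,w_t)\|_{\mathscr H_s^I}\to\infty$) is incompatible with $X(t)$ being exponentially decaying.

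\textbf{Main obstacle.} The difficult step is closing the stabilization estimate at the top level $\cE_3$ in the presence of {\bf [NL Inertia]}. The twice-time-differentiated equation contains terms of the form $w_x\int_x^L u_{tttt}d\xi$ with $u_{tttt}$ involving $w_{x}\partial_x^4 w_{tt}$ (a top-order and nonlocal combination), and when tested against a stabilization multiplier (not the velocity $w_{ttt}$), one loses the favorable cancellation that was exploited in Section \ref{inertia} to create the conserved quantity $\tfrac12\tfrac{d}{dt}\|u_{ttt}\|^2$. The key will be to route these terms through integration by parts that exploits the homogeneous trace conditions $u_{tt}(0)=u_{xtt}(0)=0$ and $u_{xxtt}(L)=u_{xxxtt}(L)=0$ (cf.\ the remark preceding \eqref{InterpolationTHISONE}), so that the remaining commutators are genuinely of the form $X^{\gamma}$ with $\gamma>1$ and hence small-data admissible. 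Once this top-level stabilization is in hand, the lower levels and the final assembly are routine modifications of estimates already performed in the excerpt.
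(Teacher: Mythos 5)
There is a genuine gap, and it is structural: your proposal does not prove the statement in question. Theorem \ref{decayingone} is the abstract barrier lemma about a nonnegative continuous function $X$ satisfying the integral inequality \eqref{ExponentialDecay}; what you have sketched is instead the proof of Theorem \ref{th:main3}, i.e., how to build a super-energy from the $\cE_i$'s of Section \ref{inertia}, derive an inequality of the form \eqref{ExponentialDecay} via energy and equipartition multipliers, and then \emph{invoke} Theorem \ref{decayingone} as a black box (``so that Theorem \ref{decayingone} immediately produces the claimed exponential decay''). That is exactly the content of Section \ref{global} of the paper, but it contains no argument whatsoever for the lemma itself. For the record, the paper does not reprove the lemma either --- it is quoted from \cite{ig1} --- but a blind proof of this statement must supply the barrier argument, not the PDE estimates that feed into it.

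Concretely, a proof of Theorem \ref{decayingone} has three ingredients your proposal never touches: (i) an absorption step, using $\alpha_i,\beta_j,\gamma_k>1$, showing that if $X$ stays below a suitable threshold $\mathscr C$ (depending on the $C_i$, $N_i$ and the exponents), then the superlinear terms $C_3\sum X^{\alpha_i}(0)$, $C_4\sum X^{\beta_i}(T)$ and $C_5\sum\int_0^T X^{\gamma_i}$ can be absorbed into $C_2X(0)$, $X(T)$ and $C_1\int_0^T X$ respectively, leaving a linear inequality of the form $X(T)+\tfrac{C_1}{2}\int_0^T X\le 2C_2X(0)$; (ii) a continuity (barrier/bootstrap) argument showing that for $X(0)\le\epsilon\le\mathscr C$ the set of times on which $X$ remains below the threshold is both open and closed in $[0,\infty)$, so the smallness propagates globally --- this is where continuity of $X$ and the strict superlinearity are essential, and where the admissible size of $\epsilon$ is pinned down; and (iii) the deduction of the exponential rate from the resulting linear inequality (applied on subintervals, or via the standard ``energy plus integral of energy'' argument), yielding $X(t)\le(\epsilon/\mathscr C)\exp(-\mathscr C t)$. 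Your ``Main obstacle'' paragraph, devoted to stabilizing $\cE_3$ against the nonlinear inertia, is a real issue for Theorem \ref{th:main3}, but it is orthogonal to the statement you were asked to prove.
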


For us, $X(t)$ will be the sum of (the majority of) the norms appearing in the formal energy identities we have constructed thus far. Any continuous function that satisfies the integral inequality has the desired property: exponential decay for sufficiently small initial conditions $X_i(0)$, which in turn yields global-in-time existence \cite{ig1}.

To form an inequality of the form \eqref{ExponentialDecay} we will utilize the previous calculations we have obtained for the energy estimates in Section \ref{inertia}, along with additional estimates based on equipartition multipliers at each level. We will attempt to form \eqref{ExponentialDecay} for {\em each}  $X_i(t),~i=0,1,2,3$  separately, where $X_i$'s correspond to each energy level we have defined before, and sum the results. To streamline exposition of comparable calculations in the earlier sections, we demonstrate the detailed calculations for $X_0(t)$. For $X_i(T),~i=1,2,3$, we will only highlight deviations from the details in the proof of theorems \ref{withoutiota} and \ref{withiota}. 

\subsection{Proof of Theorem $\ref{th:main3}$}
\begin{proof}[\unskip\nopunct]
{\bf Step 1 - Inequality for ${X}_0$:}
Recalling the estimate for $\cE_0$ in {\bf Step 2} of Section \ref{inertia}, we redefine: 
\begin{equation*}
X_{0}(t) = ||w_{t}(t)||^2 +  ||w_{xx}(t)||^2 + ||w_{x}w_{xx}(t)||^2 + ||u_{t}(t)||^2
\end{equation*}
and we have immediately the inequality:
\begin{equation}
\label{ZerothAddu}
X_0(T) + k_2 \int_0^T ||w_{xxt}||^2  \le X_0(0).
\end{equation}

It is crucial to retain the inertial term to appear under the integral sign, thus we augment \eqref{ZerothAddu} to obtain:
\begin{equation}
X_0(T) +  \int_0^T \left[ k_2 ||w_{xxt}||^2 +  ||u_{t}||^2 \right]  \le  X_0(0)+  \int_0^T ||u_{t}||^2.
\end{equation}
The inertial term appearing on the RHS will now have to be estimated. Note that if {\em we bound it above by $X_0(0)$}, it will appear under the time integral and such a bound would be inconsistent with the form of inequality \eqref{ExponentialDecay}. Rather, we estimate as:
\begin{align}
\label{InterpArg1}
||u_{t}||^2 \lesssim ||w_{x}w_{xt}||^2 \lesssim ||w_{xx}||^2 ||w_{xt}||^2 \lesssim ||w_{xx}||^6 + ||w_{xt}||^3 ,
\end{align}
where we've used Young's $p = 3$ and $q=3/2$. We interpolate~ {\small $||w_{xt}||^3$} as follows:
\begin{align}
\label{InterpArg2}
||w_{xt}||^3 \leq ||w_{t}||^{3/2}||w_{xxt}||^{3/2} \leq c_{\varepsilon_1}||w_{t}||^6 + \varepsilon_1 ||w_{xxt}||^2,
\end{align}
again using Young's with $p=4$ and $q=4/3$. Thus we have the $\varepsilon$ for absorption, and we obtain:
\begin{equation}
\label{E0oneNOTYET}
X_0(T) + c_1 \int_0^T \left[  ||w_{xxt}||^2 + ||u_{t}||^2 \right]  \leq X_0(0)+  c_2 \int_0^T \left [ ||w_{xx}||^6 + ||w_{t}||^6  \right].
\end{equation}

Invoking norm equivalence between $H^2(0,L)$ and $H^2_*$, \eqref{E0oneNOTYET} becomes:
\begin{equation}
\label{X0one}
X_0(T) +  c_1 \int_0^T \left[  ||w_{t}||^2 + ||u_{t}||^2 \right]  \leq X_0(0) +  c_2 \int_0^T \left [ ||w_{xx}||^6 + ||w_{t}||^6  \right].
\end{equation}

Now, the equipartition (stability) multiplier for this level is $w$; multiplying \eqref{dowellnon*} by the solution and integrating by parts in space and time, we obtain:
\begin{align*}
\frac{k_2}{2} ||w_{xx}(T)||^2 +& D \int_0^T \left[ ||w_{xx}||^2 + 2||w_x w_{xx}||^2 \right] - \int_0^T \left[ ||w_{t}||^2 + ||u_{t}||^2 \right] \\
=& ~
\frac{k_2}{2} ||w_{xx}(0)||^2 - \int_0^L ww_{t} \big|_0^T - 2  \int_0^L uu_{t} \big|_0^T.
\end{align*}

We note that from ~{\small $u(x,t)=-\frac{1}{2}\int_0^xw^2_x(\xi,t)d\xi$}, we have:
\begin{align*} ||u(t)||^2 \lesssim&~ \big|\big| \int_0^xw_x(\xi,t)  \left[\int_0^{\xi}w_{xx}(\zeta,t)d\zeta\right] d\xi\big|\big|^2 
\lesssim~||w_x(t)w_{xx}(t)||^2 \lesssim X_0(t),\end{align*}
via Fubini and Jensen's inequality, having then extended the integrals to $x \in [0,L]$. Hence,~ {\small $||u(T)||^2+||u(0)||^2 \leq c_3 X_{0}(0).$} The RHS can then be estimated straightforwardly, yielding:
\begin{align}
\label{X0two}
\frac{k_2}{2} ||w_{xx}(T)||^2 + D\int_0^T \left[ ||w_{xx}||^2 + 2||w_x w_{xx}||^2 \right] - \int_0^T \left[ ||w_{xxt}||^2 + ||u_{t}||^2 \right] \leq  c_3  X_{0}(0) .
\end{align}

We then take an appropriate linear combination of \eqref{X0one} and \eqref{X0two} (with constants depending on the damping coefficient $k_2$), and eliminate the negative terms appearing in \eqref{X0two}. Then, by possible adjustments of the constants, we have:
\begin{align}
\label{X0final}
X_0(T) + C_1 \int_0^T X_0 \leq C_2 X_0(0) + C_3 \int_0^T X_0^6. 
\end{align} 

\vskip.1cm \noindent
{\bf Step 2 - Inequality for $X_1$ and $X_2$:} In this step we will proceed by forming the inequality that corresponds to ~$X_1 +X_2$. As we will see later, there will be terms in the $X_2$ estimate that will need to by absorbed by some appearing in $X_1$.
 We define:
\begin{equation*}
X_1(t) = ||w_{tt}(t)||^2 + ||w_{xxt}(t)||^2 + ||w_{xt}(t)w_{xx}(t)||^2 + ||w_{x}(t)w_{xxt}(t)||^2 + ||u_{tt}(t)||^2.
\end{equation*}
\begin{remark} Note $X_1$ does not include the quantity ~{\small $\left | \left | \int_0^x  w^2_{xt} \right | \right |^2$}, as ~$\mathcal{E}_1$ does. As it can be seen from the following calculations, the aforementioned norm is not needed in obtaining \eqref{ExponentialDecay}. \end{remark}

Following similar calculations as in {\bf Step 4} in the proof of { Theorem} \ref{withiota}, we obtain:
\begin{align}
\label{InertiaDifferent}
\left ( \partial_{xt} \left [ w_x \int_x^L u_{tt} \right ] \ , w_{tt} \right) = \frac{1}{2} \frac{d}{dt} ||u_{tt}||^2 +  \frac{d}{dt} \left ( u_{tt}, \int_0^x w^2_{xt} \right ) -  3 \left ( u_{tt}, \int_0^x w_{xt}w_{xtt} \right ).
\end{align}
The conserved quantity of \eqref{InertiaDifferent} will remain to the LHS, while the remaining terms will be moved to the RHS and be estimated, after we proceed with integration in time, as follows:
\begin{enumerate}\setlength\itemsep{.5em}
\item 
$
\!
\begin{aligned}[t]
 \left| \left ( u_{tt}(T), \int_0^x w^2_{xt}(T) \right ) \right| \leq \delta_1 ||u_{tt}(T)||^2 + c_{ \delta_1} ||w_{xxt}(T)||^4 
\end{aligned}
$ 

\item 
$
\!
\begin{aligned}[t]
 \left| \left ( u_{tt}(0), \int_0^x w^2_{xt}(0) \right ) \right| & \lesssim ||u_{tt}(0)||^2 + ||w_{xxt}(0)||^4 
\end{aligned}
$ 

\item 
$
\!
\begin{aligned}[t]
 \left| \left ( u_{tt}, \int_0^x w_{xt}w_{xtt} \right ) \right| \leq  \delta_2 ||u_{tt}||^2 + c_{ \delta_2} ||w_{xxt}||^6 + c_{ \delta_2, \varepsilon_1} ||w_{tt}||^6 + c_{ \delta_2} \varepsilon_1 ||w_{xxtt}||^2.
\end{aligned}
$
\end{enumerate}

In addition to the above calculations, we add the term {\small $\int_0^T ||u_{tt}||^2$} to both sides of the inequality. On the RHS it will be estimated via:
\begin{align*}
||u_{tt}||^2 \lesssim ||w_{xxt}||^2 ||w_{xt}||^2 + ||w_{xxt}||^2||w_{x}||~||w_{xtt}|| + ||w_{xx}||^2||w_{xtt}||^2.
\end{align*}

Using Young's and interpolation, as in \eqref{InterpArg1} and \eqref{InterpArg2}, and directly invoking the stiffness calculations in {\bf Step 4} in the proof of { Theorem} \ref{withoutiota}, we arrive at:
\begin{align}
\label{X1one}
X_1(T) +  \int_0^T \left[  k_2||w_{xxtt}||^2 + ||u_{tt}||^2 \right]  \leq c_1 \left(X_1(0) + X^2_1(0)+ X_0^2(0) + X_0^{16}(0) \right) + c_2 X^2_1(T) \nonumber \\   + c_3 \int_0^T  \left[  X^2_1 + X^3_1  + X^4_1 + X^2_0 + X^3_0  \right] 
 + \delta \int_0^T  ||u_{tt}||^2  + \varepsilon \int_0^T ||w_{xxtt}||^2,
\end{align}
where $\varepsilon$ and $\delta$ collect the various $\varepsilon_i$'s and $\delta_i$'s corresponding to earlier applications of Young's inequality.

For the time-differentiated version of the equations, $w_t$ acts as the equipartition multiplier. After the appropriate calculations, and straightforward estimation, we obtain:
\begin{align}
\label{X1two}
\frac{k_2}{2} &||w_{xxt}(T)||^2 + D\int_0^T \left[ ||w_{xxt}||^2 + ||w_{xt} w_{xx}||^2 +||w_x w_{xxt}||^2 \right] - \int_0^T \left[ ||w_{tt}||^2 + ||u_{tt}||^2 \right] \nonumber \\
\leq&~ c_1 (X_1(0) + X_0(0))  + \varepsilon_1 ||w_{tt}(T)||^2 + \delta_1||u_{tt}(T)||^2  + c_2 \int_0^T \left[ X_1^2 + X^0_1 \right ] + \delta_2 \int_0^T ||u_{tt}||^2.
\end{align}

Now, we define:
\begin{equation*}
X_2(t) = ||w_{xxt}(t)||^2 + ||\partial_x^4w(t)||^2 + ||w_x(t)\partial_x^4w(t)||^2 + ||u_{xxt}(t)||^2.
\end{equation*}
Then, duplicating the calculations in {\bf Step 5} in the proof of {Theorem} \ref{withiota} and adding ~ {\small $\int_0^T ||u_{xxt}||^2$} to both sides we have:
\begin{align}
\label{X2one}
X_2(T) +  \int_0^T \left[ k_2 ||\partial_x^4 w_t||^2 + ||u_{xxt}||^2 \right ]  \leq c_1 \left(X_2(0) +  X_0^9(0)  \right)  + c_2 \int_0^T \left[X_2^2 + X_1^2 + X_0^{10/3} + X_0^4    \right ] \nonumber \\
 +\varepsilon_2 \int_0^T  ||w_{xxtt}||^2 . 
\end{align}

\begin{remark}
The term $||w_{xxtt}||^2$ appearing on the RHS of the above inequality is the reason why we chose to have the calculations of $X_1$ and $X_2$ combined.
\end{remark}

To complete the estimate for $X_2(t)$,  we proceed by employing $\partial_x^4w$ as a multiplier. The calculations corresponding to stiffness are described in {\bf Step 5} in the proof of {Theorem} \ref{withoutiota}. Inertial terms are handled through differentiation and spatial integration by parts:
\begin{align*}
\int_0^T \left( \partial_{x} \left [ w_{x} \int_x^L u_{tt} \right ] , \partial_x^4 w \right) = & \int_0^T \left(  w_{xx} \int_x^L u_{tt},\partial_x^4 w \right)- \int_0^T \left( w_{x}u_{tt} , \partial_x^4 w \right)  \\
= \int_0^T \left(  w_{xx} \int_x^L u_{tt},\partial_x^4 w \right) &- \int_0^T \left( u_{tt}, w_{xx}w_{xxx} \right) - 2 \int_0^T  \left( u_{ttx}, w^2_{xx}\right) - \int_0^T \left( u_{ttxx}, w_{x} w_{xx}\right).
\end{align*}
The only non-trivial term to estimate is the last; integrate by parts {\em in $t$} and note $\partial_t \left(- w_{x}w_{xx} \right) = u_{xxt}$:
\begin{align*}
- \int_0^T \left( u_{ttxx}, w_{x} w_{xx}\right) = -w_{x}w_{xx}u_{xxt} \big|_0^T - \int_0^T ||u_{xxt}||^2.
\end{align*}

Combining, we obtain:
\begin{align}
\label{X2two}\nonumber
\frac{k_2}{2} ||\partial_x^4 w(T)||^2 +& D \int_0^T \left[ ||\partial_x^4 w||^2 + ||w_x \partial_x^4 w||^2 \right ]  -  \int_0^T \left[ ||w_{xxt}||^2 + ||u_{xxt}||^2 \right ] \\ 
\leq & ~c_1 \left( X_2(0) + X_1(0) +  X_0(0) + X_0^2(0)\right) \nonumber + \varepsilon_3 ||w_{xxt}(T)||^2 + \delta_3 ||u_{xxt}(T)||^2  \\ &+ c_2\int_0^T \left[ X_2 ^2 + X_1 ^2 + X_0^2 + X_0^9 +  X_0^{17}  \right ] +\varepsilon_4 \int_0^T ||\partial_x^4 w||^2.
\end{align}

As before, we add \eqref{X1one} to \eqref{X2one}, and we add \eqref{X1two} to \eqref{X2two}; we then choose an appropriate linear combination of the sums for absorption of negative integral terms; we then  choose $\varepsilon_i$, $\delta_i$ appropriately, and invoke norm equivalence for $H^2_*$, yielding the estimate for $X_1 + X_2$:
%
%
\begin{align}
\label{X1X2final}
 X_2(T) + &X_1(T) +  C_1 \int_0^T \left[ X_2 + X_1 \right] \nonumber \\  \leq &~C_2  \Big( X_2(0)  + X_1(0)+ X^2_1(0) + X_0(0)+ X_0^2(0)  + X_0^9(0) + X_0^{16}(0)      \Big) \nonumber  \\ 
& + C_3 X^2_1(T) + C_4 \int_0^T \left[ X_2^2 + X_2^4 + X_1^2 + X^3_1 + X^4_1 + X_0^2 + X^3_0 + X_0^9 + X_0^{17} \right ].
\end{align}
{\bf Step 3 - Inequality for $X_3$:} 
Define:
\begin{equation*}
X_3(t) = ||w_{ttt}(t)||^2 + ||w_{xxtt}(t)||^2 + ||w_x w_{xxtt}(t)||^2 + ||w_{xx}(t)w_{xtt}(t)||^2 + ||u_{ttt}(t)||^2.   
\end{equation*} 
The estimate corresponding to {\em two time} differentiations of \eqref{dowellnon*} with the multiplier $w_{ttt}$ can be directly formed from the existing calculations for {\bf Step 7} in the proof of {\em Theorem} \eqref{withiota}.
\begin{align}
\label{X3one}
 X_3(T) +  c_1 \int_0^T  \left[  ||w_{ttt}||^2 + ||u_{ttt}||^2 \right]  
\leq  c_2 \left( X_3(0) + X^9_2(0) + X^9_1(0) \right) + c_3 \left( X_2^9(T) + X_1^9(T) \right) \\\nonumber + c_4 \int_0^T \left[X_3^2 + X_2^2 +  X_2^4 + X_1^2 + X_1^3 + X_1^4 + X_0^2 + X_0^3 + X_0^4  \right] ,
\end{align}
where we added {\small $\int_0^T ||u_{ttt}||^2$} to both sides and proceeded as in earlier estimates in this section.

In this case, $w_{tt}$ is the equipartition multiplier, and calculations  corresponding to stiffness are duplicated from {\bf Step 7} in the proof of {\em Theorem} \eqref{withiota}. The inertial term calls for a slightly altered approach:
\begin{align*}
\left( \partial_{xtt} \left [ w_{x} \int_x^L u_{tt} \right ] , w_{tt} \right) =  -  \left( w_{xtt} \int_x^L u_{tt}  , w_{xtt} \right) -  2 \left( w_{xt} \int_x^L u_{ttt}  , w_{xtt} \right) -  \left( w_{x} \int_x^L u_{tttt}  , w_{xtt} \right).
\end{align*}

The first two terms above can be treated similarly to $\mathcal J_i$ of {\bf Step 5} in the proof of {\em Theorem} \eqref{withiota}, and for the latter we write:
\begin{align*}
\int_0^T \left(  u_{tttt}  , -  \int_0^x w_{x}w_{xtt} \right) = \int_0^T \left( u_{tttt}, u_{tt} \right) + \int_0^T \frac{d}{dt}  \left( u_{ttt}, \int_0^x w^2_{xt} \right) - 2 \int_0^T \left( u_{ttt}, \int_0^x w_{xt}w_{xtt} \right).
\end{align*}
The first term will be integrated by parts {\em in time} and the following two will be estimated as above.

Hence, assembling everything together we have:
\begin{align}
\label{X3two}
\frac{k_2}{2} ||w_{xxtt}(T)||^2 & + c_1 \int_0^T \left[ ||w_{xxtt}||^2 + ||w_{x}w_{xxtt}||^2 + ||w_{xx}w_{xtt}||^2 \right] - c_2 \int_0^T \left[ ||w_{ttt}||^2 + ||u_{ttt}||^2 \right] \nonumber \\
& \leq c_3 \left( X_3(0) + X_1^2(0) + X_1(0) \right) + c_4 X_1(T)   +\varepsilon_1||w_{ttt}(T)||^2 + \varepsilon_2||u_{ttt}(T)||^2 \nonumber + c_5 X_1^2(T)   \\
& ~~~+ c_6 \int_0^T \left[X_3^2 + X_2^2 + X_2^4 + X_1^2 + X_1^4 + X_0^2 + X_0^4   \right] .
\end{align}

Combining \eqref{X3one} with \eqref{X3two} in an appropriate linear combination, we obtain:
\begin{align}
\label{X3final}
X_3(T) + C_1 \int_0^T X_3  \leq C_2 \left(X_3(0) + X^9_2(0) + X_1(0) + X^2_1(0) + X^9_1(0) \right)  + C_3 \left( X_1^9(T) + X_2^9(T) \right) \nonumber\\ + C_4 X_1(T) 
+ C_5 \int_0^T \left[X_3^2 + X_2^2 +  X_2^4 + X_1^2 + X_1^3 + X_1^4 + X_0^2 + X_0^3 + X_0^4    \right].
\end{align}
Here we remark that the term $X_1(T)$ appearing above is bounded by \eqref{X1one}.

Finally, we note that the bound above depends on the boundedness of the quantity $X_3(0)$ which contains the term $||w_{ttt}(0)||^2+||u_{ttt}(0)||^2$. This term does not explicitly appear as data, however, it is directly bounded by the data $||(w_0, w_1) ||^2_{\cD(\cA^2)^2}$, which can be shown directly on approximate solutions, as was the focus of {\bf Step 6} in Section \ref{withiota}. 

\vskip.1cm\noindent
{\bf Step 4 - Global Estimate:} With the constituent inequalities in hand from {\bf Steps 1--3}, we  form: 
\begin{equation*}
X(T) = X_0(T) + X_1(T) + X_2(T) + X_3(T).
\end{equation*}
This quantity is nonnegative, and continuous due to the regularity of constructed solutions.  We then add \eqref{X0final}, \eqref{X1X2final} and \eqref{X3final}, and with minor algebraic manipulations, we obtain:
\begin{equation}
\label{XGlobal}
X(T) + C_1 \int_0^T X(s)ds \leq C_2 X(0) +C_3 F_0 + C_4 \left( X^2(T) + X^9(T) \right) + C_5 \int_0^T F(s)ds,
\end{equation}
where
\begin{equation*}
F_0 =X^2(0) + X^9(0) + X^{16}(0)
\end{equation*} 
and
\begin{equation*}
F(s) = X^2(s) + X^3(s) + X^{10/3}(s) + X^4(s) + X^6(s) + X^9(s) + X^{17}(s).
\end{equation*} 
This final estimate \eqref{XGlobal} is of the form in Theorem \ref{decayingone}, which concludes the proof of Theorem \ref{th:main3} \end{proof}

\section{Comments, Open Problems, and Future Work}\label{open}
We briefly state and discuss some open problems and directions for future work.
\begin{itemize}
\item {\bf The existence of finite energy, weak solutions} seems to be a challenging one. It is clear that additional compactness is needed for the identification of limit points associated only to the stiffness portion of the dynamics. Compensated compactness requirements (e.g., those in $L^1$) might be adapted to the nonlinear structures, though it is unclear if such an approach would be more expedient than the higher order energy methods employed here.
\item {\bf The elimination or weakening of damping} seems a natural course. In our estimates, it is clear that the regularizing effects of Kelvin-Voigt damping are stronger than explicitly needed in the construction of solutions and estimation of inertial terms.  On the other hand, weak damping of the form $k_0w_t$ is clearly too weak to address inertial terms. Unfortunately, for cantilevered beams, the physical interpretation of $A^{1/2}w_t \sim k_1\partial_x^2w_t$ damping is unclear---see the discussions in \cite{dghw} and \cite{HHWW}. Additionally, it is a question for future work to utilize weaker (than $Aw_t$)damping to obtain global solutions with sufficiently small data for \eqref{dowellnon*} with $ \sigma, k_2=1$ and $\iota = 0$.
\item Explicit {\bf proof of blow up} for large data in this quasilinear system would nicely complement our local existence results. Currently, numerical evidence indicates that large data quickly leads to non-physical solutions.
\item {\bf The introduction of non-conservative forces} as discussed in the introduction (e.g., with application to piezoelectric energy harvesting) is a natural next step. In fact, the earlier work \cite{dghw} addresses a piston-theoretic beam, as does the more recent \cite{follower,McHughIFASD2019}. However, exploiting the superlinearity of the nonlinear stiffness to provide a rigorous framework for long-time behavior of trajectories---or even constructing limit cycle oscillations---is a desirable future goal.
\item {\bf The 2-D cantilever model}, invoking inextensible elasticity (see the engineering references \cite{dowell4,inext2}), is the topic of forthcoming work. This challenging mathematical problem was untouchable before establishing the theory in this treatment. Difficulties for the 2-D problem include the challenging mixed, clamped-free-type plate boundary conditions, as well as the loss of the 1-D Sobolev embeddings (which were used profusely and non-trivially) in this treatment. Closing estimates will require {\em even higher} differentiations of the equations, resulting in further involved calculations beyond the numerous pages here.
\end{itemize}

\section{Acknowledgements} \label{ack}
The authors are generously supported by NSF-DMS-1907620, ``Experiment, Theory, and Simulation of Aeroelastic Limit Cycle Oscillations for Energy Harvesting Applications".

\end{document}